 \gdef\texorpdfstring#1#2{#1}
\NewDocumentCommand\outsymbol{om}{\overline{\IfValueTF{#1}{#1{#2}}{#2}}}
\NewDocumentCommand\unisymbol{om}{\widehat{\IfValueTF{#1}{#1{#2}}{#2}}}
\let\tensor\IndexSymbol
\NewDocumentCommand\outtensor{d()}{\IndexSymbol(\outsymbol[#1])}
\gdef\ccenterz{\specialconstant[\!]{\centerz}}
\gdef\cso{\specialconstant[\hspace{-.05em}]{S}}
\NewDocumentCommand\ckappa{d<>ot^}{\IndexSymbol\kappa}
\NewDocumentCommand\cdelta{d<>ot^}{\IndexSymbol\delta}
\NewDocumentCommand\specialconstant{omd<>ot_}
{\IfBooleanTF{#5}{{\c{[#1]{\c*}}<#3>[#4]_{#2}}_}{\c{[#1]{\c*}}<#3>[#4]_{#2}}}%
\gdef\matrixA{\IndexSymbol[\!]{\textrm A}}
\NewDocumentCommand\vecoff{t_}
{\IfBooleanTF{#1}\vecoffsnd\vecoffthd}
\gdef\vecoffsnd#1{\vecoff{f_{#1}}}
\gdef\vecoffthd#1{\IndexSymbol{X_{#1}}}
\gdef\gewicht{\mathcal b}
\let\Hradius\sigma
\let\Aradius r
\let\rradius R
\NewDocumentCommand\partialt{d()}{\IfValueTF{#1}{(#1)\bolddot{\vphantom]}}{\bolddot}}%
\NewDocumentCommand\partialr{d()}{\IfValueTF{#1}{\partialr{(#1)}}{\bolddash}}%
\newlength\bolddotlength
\gdef\bolddot#1{\setlength\bolddotlength{(\widthof{\ensuremath{\boldsymbol{#1}}}-\widthof{\ensuremath{#1}})/2}%
 \hskip-\bolddotlength
 \boldsymbol{\dot{\phantom{\boldsymbol{#1}}}}
 \hskip-\bolddotlength
 \setlength\bolddotlength{\widthof{\ensuremath{#1}}}
 \hskip-\bolddotlength#1}
\gdef\bolddash#1{#1\boldsymbol{\vphantom{#1}'}}
\newtheoremstyle{mytheorem}{3pt}{}{\itshape}{}{\bfseries}{\nopagebreak\newline}{.5em}{}%
\newtheoremstyle{mydefinition}{3pt}{}{}{}{\bfseries}{\nopagebreak\newline}{.5em}{}%
\gdef\mytheorem{\theoremstyle{mytheorem}}
\gdef\mydefinition{\theoremstyle{mydefinition}}
\gdef\mytheoremcounter{theorem}
\newtheorem{theorem}{Theorem}[section]
\newtheorem{corollary}[\mytheoremcounter]{Corollary}
\newtheorem{lemma}[\mytheoremcounter]{Lemma}
\newtheorem{proposition}[\mytheoremcounter]{Proposition}
\newtheorem{notation}[\mytheoremcounter]{Notation}
\newtheorem{definition}[\mytheoremcounter]{Definition}
\theoremstyle{remark}
\newtheorem{remark}[\mytheoremcounter]{Remark}
\gdef\ii{I}\gdef\ij{J}\gdef\ik{K}\gdef\il{L}
\gdef\oi{i}\gdef\oj{j}\gdef\ok{k}\gdef\ol{l}
\let\oldPhi\Phi
\let\oldphi\phi
\newdimen\middle@width
\newcommand*\phantomas[3][c]{\ifmmode\makebox[\widthof{$#2$}][#1]{$#3$}\else\makebox[\widthof{#2}][#1]{#3}\fi}%
\NewDocumentCommand\tracefree{m}
{\setlength\middle@width{\widthof{\ensuremath{#1}}}
 #1\hskip-\middle@width
 \phantomas[r]{#1}{\vphantom{\ensuremath{#1}}^{\,\!^\circ}}}
\NewDocumentCommand\mean{m}
{\setlength\middle@width{\widthof{\ensuremath{#1}}}
 \phantomas{#1}{{=\!}}
 \hskip-\middle@width#1}
\NewDocumentCommand\intervalI{s}{\IfBooleanTF{#1}I{\ensuremath{\intervalI*}\xspace}}%
\NewDocumentCommand\intervalJ{s}{\IfBooleanTF{#1}J{\ensuremath{\intervalJ*}\xspace}}%
\gdef\schwarzs{\mathcal S}\let\schws\schwarzs
\gdef\schwarzoutg{\outg[\schwarzs]}
\let\schwarzoutric\schwarzric
\gdef\schwarzoutlevi{\outlevi[\schwarzs]}
\gdef\euclideane{e}
\gdef\eukoutg{\outg[\euclideane]}
\NewTensor\sphg{\Omega}
\gdef\trans#1{{#1}^{\text t}}
\gdef\transg{\trans g}
\gdef\transh{\trans{h\hspace{-.05em}}\hspace{.05em}}
\gdef\eflapsymbol{f}
\NewTensor*\eflap[\!]\eflapsymbol
\NewTensor*\ewlap[\!]\lambda
\NewTensor*\efjac[\!]{\mathcal f}
\NewTensor*\ewjac[\!]<\!>\gamma
\gdef\deform#1{{#1}^{\text d}}
\NewDocumentCommand\rad{od<>os}
{\IfValueTF{#1}{\rad<#2>[#1]}
 {\IfDisplaystyleTF{\left|\outx<#2>[#3]\right|}{|\outx<#2>[#3]|}}}
\gdef\time{{\boldsymbol t}}
\gdef\volume#1{\IfDisplaystyleTF{\left|#1\right|}{|#1|}}
\NewDocumentCommand\c{sG{c}}{\IfBooleanTF{#1}{#2}{\IndexSymbol*#2}}
\NewDocumentCommand\Cof{G{C}d<>d()}
{\IfValueTF{#3}{\Cof{#1}<\IfValueTF{#2}{#2,}\relax{#3}>}{\CofSnd{#1\IfValueTF{#2}{[#2]}\relax}}}
\NewDocumentCommand\CofSnd{G{C}d<>o}
{\IfValueTF{#3}{\CofSnd{#1}<\IfValueTF{#2}{#2,}\relax{#3}>}{\mathop{#1\IfValueTF{#2}{(#2)}\relax}}}%
\NewTensor\M[\hspace{-.05em}]<\hspace{-.05em}>\Sigma
\NewTensor\rnu[\!]u
\NewTensor\tnu[\hspace{-.05em}] w
\NewTensor\lapse[\!]u
\NewTensor\rbeta[\!]\beta
\metric\NewTensor[\newmathcal]\metric[\!\!]<\hspace{.05em}>g
\let\g\metric
\let\outg\outmetric
\let\unig\unimetric
\let\ric\ricci
\let\outric\outricci
\let\uniric\uniricci
\let\outsc\outscalar
\let\unisc\uniscalar
\let\zFund\k
\let\zFundtrf\ktrf
\let\outzFund\outk
\NewDocumentCommand\troutzFund{D<>{}O{}}{\tr<#1>[#2]\hspace{.05em}\outzFund[#2]}
\NewDocumentCommand\outzFundnu{D<>{}O{}}{\mathop{{\outzFund[#2]_{\nu<#1>[#2]}}}}
\NewDocumentCommand\outzFundnunu{D<>{}O{}}{\mathop{{\outzFund[#2]_{\nu<#1>[#2]\nu<#1>[#2]}}}}
\let\H\mc
\let\outH\outmc
\let\oldnu\nu
\NewTensor\normal[\!]\oldnu
\let\nu\normal
\let\tv\timenormal
\let\outnu\timenormal
\NewTensor*\laplace[\!]\Delta
\NewTensor\Hess[\!]{\text{Hess}}
\NewTensor\Hesstrf[\!]{\text H\tracefree{\text{es}}\text s}
\NewTensor\div[\!]{\text{div}}
\NewTensor*\tr[\!]{\text{tr}}
\RenewTensor*\exp{\text{exp}}
\let\Vol\vol
\NewTensor*\mug[\!]\mu
\gdef\eukmug{\mug[\euclideane]}
\NewDocumentCommand\jacobit{G{\pm}d<>od<>s}
{\IfValueTF{#2}{\jacobit{#1}[#3]<#2>}
 {\IndexSymbol*<\hspace{-.05em}>{\textrm L}[#3]<#4>_{#1}^{\!t}}}
\NewDocumentCommand\jacobipm{G{\pm}d<>od<>s}
{\IfValueTF{#2}{\jacobipm{#1}[#3]<#2>}
 {\IndexSymbol*<\hspace{-.05em}>{\textrm L}[#3]<#4>_{#1}}}
\NewDocumentCommand\ajacobipm{G{\gewicht}}{\IndexSymbol*[\hspace{-.05em}]<\hspace{-.05em}>{\textrm L_{#1}^{\!\!a}}}
\NewDocumentCommand\trzd{smm}{{#2}\odot{#3}}
\NewDocumentCommand\trtr{smm}
{\ifthenelse{\equal{#2}{#3}}{\left|#2\right|_{\g*}^2}{\tr(\trzd{#2}{#3})}}%
\NewDocumentCommand\outc{G{c}}{\IndexSymbol(\outsymbol)*{#1}}\let\oc\outc
\gdef\outve{\IndexSymbol\ve}
\NewDocumentCommand\outck{d<>o}{\outc<#1>[#2]_{\,\outzFund*}\vphantom{\outc_{\,\outzFund*}}}
\gdef\outtr{\IndexSymbol(\outsymbol)[\!]{\text{tr}}}
\NewDocumentCommand\outtrzd{D<>{}O{}mm}
{\mathop{{#3\IndexSymbol(\outsymbol)[#2]<#1>\odot{#4}}}}
\NewDocumentCommand\outtrtr{D<>{}O{}mm}
{\mathop{{\ifthenelse{\equal{#3}{#4}}
 {{\left|#3\right|_{\outg<#1>[#2]}^2}}
 {\mathop{{\outtr<#1>[#2](\outtrzd<#1>[#2]{#3}{#4})}}}}}}
\NewTensor\mHaw{{\normalfont m_{\text{H}}}}
\NewDocumentCommand\d{s}{\IfBooleanTF{#1}\relax{\mathop{}\!}\mathrm d}%
\newcommand\pullback[1]{{#1}^*}
\DeclareMathOperator\id{id}
\DeclareMathOperator\lin{lin}
\DeclareMathOperator\graph{graph}
\gdef\graphnu#1#2{\graph^{\hspace{.05em}#1\hspace{-.15em}}#2}
\newcommand\R{\normalfont\mathds{R}}																
\newcommand\N{\normalfont\mathds{N}}																
\newcommand\X{\normalfont\mathfrak{X}}															
\newcommand\Lp{\normalfont\textrm L}
\newcommand\Wkp{\normalfont\textrm W}
\newcommand\Hk{\normalfont\textrm H}
\newcommand\Ck{\normalfont\mathcal C}
\NewDocumentCommand\sphere{t^}{\IfBooleanTF{#1}{\mathds S^}{\mathds S^2}}
\newcommand\ve{\varepsilon}																					
\DeclareMathAlphabet{\mathcal}{OT1}{pzc}{m}{n}
\let\newmathcal\mathcal
\NewDocumentCommand\normalbrace{t+ssmss}
{\IfBooleanTF{#1}\relax\hbox{
 {\normalfont(}\IfBooleanTF{#2}\relax{\hspace{-.05em}}
 \IfBooleanTF{#3}{\hspace{-.1em}}\relax#4
 \IfBooleanTF{#5}\relax{\hspace{.05em}}
 \IfBooleanTF{#6}{\hspace{.1em}}\relax{\normalfont)}}}
\title[Foliations by spheres with constant expansion]{Foliations by spheres with\\constant expansion for isolated\\systems without asymptotic symmetry}
\author[Christopher Nerz]{Christopher Nerz}
\address{Mathematisches Institut\\Universit\"at T\"ubingen\\Auf der Morgenstelle 10\\72076 T\"ubingen, Germany}
\email{christopher.nerz@math.uni-tuebingen.de}
\date\today
\begin{document}%
\begin{abstract}
Motivated by the foliation by stable spheres with constant mean curvature constructed by Huisken-Yau, Metzger proved that every initial data set can be foliated by spheres with constant expansion (CE) if the manifold is asymptotically equal to the standard $[t\,{=}\,0]$-timeslice of the Schwarzschild solution. In this paper, we generalize his result to asymptotically flat initial data sets and weaken additional smallness assumptions made by Metzger. Furthermore, we prove that the CE-surfaces are in a well-defined sense (asymptotically) independent of time if the linear momentum vanishes.
\end{abstract}
\maketitle
\section*{Introduction}\bgroup\def\thetheorem{\arabic{theorem}}
Motivated by an idea of Christodoulou and Yau \cite{christodoulou71some}, Huisken-Yau proved that every Riemannian manifold is (near infinity) uniquely foliated by stable surfaces with constant mean curvature (CMC) if it is asymptotically equal to the (spatial) Schwarzschild solution and has positive mass \cite{huisken_yau_foliation}. Their decay assumptions were subsequently weakened by Metzger, Huang, Eichmair-Metzger, and the author \cite{metzger2007foliations,Huang__Foliations_by_Stable_Spheres_with_Constant_Mean_Curvature,metzger_eichmair_2012_unique,nerz2014CMCfoliation}. Furthermore, the author proved that asymptotic flatness is characterized by the existence of such a CMC-foliation \cite{nerz2014GeometricCharac}.
Huisken-Yau's idea to use foliations by \lq good\rq\ hypersurfaces was picked up by Metzger who proved that every initial data set, which is asymptotically to the standard $[t\,{=}\,0]$-timeslice in the Schwarzschild solution, can (near infinity) be foliated by spheres of constant expansion (CE) and that these CE-surfaces are unique within a well-defined class of surfaces \cite{metzger2007foliations}.
He motivated the CE-foliation among other things as foliation adapted to the apparent horizons which have zero expansion and that CE-surfaces are the non-time symmetric analog of CMC-surfaces.\pagebreak[3]

Note that the CMC- and the CE-foliation are not the only foliations used in the mathematical general relativity: For example, Lamm-Metzger-Schulze achieved corresponding existence and uniqueness results for a foliation by spheres of Willmore type \cite{lamm2011foliationsbywillmore} and (in the static case) Cederbaum proved that the level-sets of the static lapse function foliate the timeslices (near infinity) \cite{Cederbaum_newtonian_limit}. However, we will only use the CMC- and the CE-foliations in this paper.\pagebreak[3]

To explain Metzger's assumptions for his existence theorem for the CE-foliation, let us first recall that an initial data set is a tuple $(\outM,\outg*,\outzFund,\outmomden*,\outenden)$ satisfying the Einstein constraint equations\footnote{We dropped the physical factor $8\pi$ for notational convenience.}
\begin{equation*}
 2\,\outenden = \outsc - \outtrtr\outzFund\outzFund + \outmc^2, \qquad
 \outmomden = \outdiv(\outmc\;\outg*-\outzFund).\labeleq{constraint_equations}
\end{equation*}
Here, $(\outM,\outg*)$ is a Riemannian manifold, $\outzFund$ is a symmetric $(0,2)$-tensor, $\outmomden$ a $(0,1)$-tensor, and $\outenden$ a function on $\outM$. This is motivated by a three-dimensional spacelike hypersurface $(\outM,\outg*)$ within a Lorenzian manifold $(\uniM,\unig*)$ with Einstein tensor $\einstein*$, the second fundamental form $\outzFund$ of $(\outM,\outg*)\hookrightarrow(\uniM,\unig*)$, its energy density $\outenden:=\einstein*(\tv,\tv)$, and the momentum density $\outmomden:=\einstein*(\tv,\cdot)$, where $\tv$ is the future pointing unit normal of $(\outM,\outg*)\hookrightarrow(\uniM,\unig*)$. If the surrounding Lorentzian manifold satisfies the Einstein equations $\einstein*=\uniric*-\frac12\,\unisc\,\unig*$, then the Gau\ss-Codazzi equations of $\outM\hookrightarrow(\uniM,\unig*)$ are equivalent to the constraint equations \eqref{constraint_equations}.

In this notation, Metzger assumed asymptotic to the standard $[t\,{=}\,0]$-timeslice of the Schwarzschild solution, i.\,e.\ the existence a coordinate system $\outx:\outM\setminus\outsymbol L\to\R^3\setminus\overline{B_1(0)}$ mapping the manifold (outside of some compact set $\outsymbol L$) to the Euclidean space (outside of a closed unit ball), such that the push forward of the metric $\outg*$ is asymptotically equal to the Schwarzschild metric $\schwarzoutg*$ as $\vert\outx\vert\to\infty$. More precisely, he assumed that the $k$-th derivatives of the difference $\outg_{ij}-\schwarzoutg_{ij}$ of the metric $\outg*$ and the Schwarzschild metric $\schwarzoutg*$ decays in these coordinates like $\vert\outx\vert^{-1-\outve-k}$ for $k\le2$.\footnote{In fact, he assumed this decay in a more geometric way: $\outg*-\schwarzoutg*=\mathcal O_0(\rad^{{-}1-\ve})$, $\outlevi_{ij}^k-\schwarzoutlevi_{ij}^k=\mathcal O_0(\rad^{-2-\ve})$, and $\outric_{ij}-\schwarzoutric_{ij}=\mathcal O_0(\rad^{-3-\ve})$ for some $\ve>0$, where we used the notation explained in Section~\ref{Assumptions_and_notation}. Actually, he also allowed $\ve=0$ if the corresponding constant is sufficiently small.} This is abbreviated with $\outg*-\schwarzoutg*=\mathcal O_2(\vert x\vert^{-1-\ve})$. He furthermore assumed that the second fundamental form $\outzFund*$ decays sufficiently fast, i.\,e.\ $\outzFund*=\mathcal O_1(\vert x\vert^{-2})$, and that the corresponding constant is sufficiently small, i.\,e.\ $\vert\hspace{.05em}\outzFund_{ij}\vert\le\nicefrac\eta{\vert x\vert^2}$ for some small constant $\eta\ll1$ and correspondingly for the first derivative. He motivated this point-wise assumption by the fact that at least in a specific example this foliation only exists for sufficiently small second fundamental form. However, this example of a second fundamental form is solely controlled by an \emph{integral} quantity: the ADM-linear momentum defined by Arnowitt-Deser-Misner \cite{arnowitt1961coordinate}. In the last paragraph of \cite{metzger2007foliations}, Metzger clarifies that (in the general setting) this foliation is nevertheless \emph{not} characterized by the ADM-linear momentum (or the ADM-mass), i.\,e.\ smallness of the linear momentum is (in general) not sufficient to ensure existence of the CE-foliation.\smallskip

The first main result of this paper is the existence of the CE-foliations under weaker decay assumptions on the metric. Furthermore, we only assume that the second fundamental form is of order $\vert\outx\vert^{-2}$, has asymptotically vanishing divergence $\outmomden*=\outdiv(\outH\outg*-\outzFund*)=\mathcal O_0(\vert\outx\vert^{-3-\ve})$, and need only additionally \lq smallness\rq\ for some integral-quantities of $\outzFund$.\footnote{Note that we can alter the assumptions on $\outzFund*$, see Remark~\ref{Alternative_assumptions}.} Here, we only state a simpler, less general version -- see Theorem~\ref{Existence} for the more general version.
\begin{corollary}[Existence of CE-foliation -- special case of Theorem~\ref{Existence}]\label{Existence_descriptive}
Let $(\outM,\outg*,\outzFund,\outmomden*,\outenden)$ be a \emph{$\Ck^2_{\frac12+\outve}$-asymptotically flat}, \emph{asymptotically maximal} initial data set with non-vanishing mass $\mass\neq0$. Assume that $\outzFund*$ is $\Ck^0_{2+\outve}$-asymptotically anti-symmetric and vanishes $\Ck^1_2$-asymptotically. If the \normalbrace*{ADM-}\linebreak[1]linear momentum is sufficiently small, then there exist closed CE-surfaces $\M<\Hradius>$ smoothly foliating $\outM$ outside a compact set.
\end{corollary}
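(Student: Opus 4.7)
The plan is to construct the CE-foliation leaf by leaf by perturbing the known CMC-foliation. The hypotheses are strong enough that the author's earlier CMC existence result \cite{nerz2014CMCfoliation} applies, yielding a smooth family $\{\Sigma^c_\Hradius\}_{\Hradius\ge\Hradius_0}$ of stable CMC-spheres foliating the exterior of a compact set, each being $\Ck^2$-close to a Euclidean coordinate sphere of radius $\Hradius$. Writing a candidate CE-leaf as a normal graph $\Sigma^f_\Hradius := \{\outexp_p(f(p)\,\nu(p)) : p \in \Sigma^c_\Hradius\}$, the CE-condition becomes a scalar elliptic equation
\[
\Phi_\Hradius(f) \;:=\; \outmc(\Sigma^f_\Hradius) - \trace_{\Sigma^f_\Hradius}\outzFund \;=\; \theta_\Hradius
\]
for the graphing function $f:\Sigma^c_\Hradius\to\R$ together with an unknown Lagrange multiplier $\theta_\Hradius$. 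I would solve this for each $\Hradius\ge\Hradius_0$ via the implicit function theorem in a weighted H\"older space and then glue the resulting leaves into a foliation.

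The crux is the invertibility of the linearization $L_\Hradius := d\Phi_\Hradius(0)$, which splits as $L_\Hradius = L^c_\Hradius + E_\Hradius$. Here $L^c_\Hradius = -\laplace - |\zFund|^2 - \outric(\nu,\nu)$ is the usual CMC-Jacobi operator on $\Sigma^c_\Hradius$, and $E_\Hradius$ collects the first- and zero-order contributions produced by the variation of $\trace_\Sigma\outzFund$ along a normal deformation. By \cite{nerz2014CMCfoliation}, $L^c_\Hradius$ is invertible on the $\Lp^2$-orthogonal complement of the constants with operator norm $\lesssim \Hradius^3/\mass$: the non-vanishing mass $\mass$ shifts the $\ell=1$ spherical-harmonic eigenvalues -- which lie in the kernel of the Euclidean model operator -- up to size $\mass\,\Hradius^{-3}$. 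The decay $\outzFund = \mathcal O_1(\rad^{-2})$ only makes $E_\Hradius$ an order-$\Hradius^{-1}$ perturbation in the natural norms, which is \emph{not} a priori small enough to absorb into $L^c_\Hradius$; its projection onto the low ($\ell\le 1$) modes must be estimated directly. When one integrates $E_\Hradius f$ against the Euclidean coordinate functions on an approximate coordinate sphere and expands asymptotically, the antipodal-antisymmetric part of $\outzFund$ cancels the would-be dominant contributions by symmetry, the $\Ck^1_2$-asymptotic vanishing renders the pointwise remainders genuinely subleading, and the asymptotic maximality kills the bulk-trace term; what remains at leading order is proportional to the ADM-linear momentum $\impuls$. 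Smallness of $\impuls$ therefore translates into smallness of the dangerous projection of $E_\Hradius$, and a Neumann-series argument on the orthogonal complement of the constants yields the uniform estimate $\|L_\Hradius^{-1}\|\lesssim \Hradius^3/\mass$.

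Given this quantitative invertibility, a Newton iteration in a small $\Ck^{2,\alpha}$-ball produces for every $\Hradius\ge\Hradius_0$ a unique solution $(f_\Hradius,\theta_\Hradius)$, with $\theta_\Hradius = -2\Hradius^{-1} + o(\Hradius^{-1})$ and $f_\Hradius$ small in the weighted norm. Smooth dependence on $\Hradius$ follows from the parametric implicit function theorem, and a second-variation computation (again via positive mass) shows $\partial_\Hradius\theta_\Hradius > 0$ for $\Hradius$ large, which forces the leaves $\Sigma^{f_\Hradius}_\Hradius$ to be mutually disjoint and to exhaust a neighborhood of infinity -- the desired foliation. The main obstacle is the sharp invertibility of $L_\Hradius$: the CMC-Jacobi spectrum is borderline, the perturbation $E_\Hradius$ sits at exactly the critical scale at which it could destroy that spectrum, and the antisymmetry, asymptotic maximality, and linear-momentum-smallness hypotheses must all be used simultaneously and in delicate balance to keep the low-mode projection of $E_\Hradius$ strictly below the $\mass\,\Hradius^{-3}$ threshold produced by $L^c_\Hradius$.
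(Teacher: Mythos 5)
Your reduction is in the same spirit as the paper's: start from the CMC-foliation of Theorem~\ref{existence_of_CMC_leaf}, treat the CE-condition as a perturbation of the CMC-condition, and use the asymptotic anti-symmetry, asymptotic maximality and smallness of the linear momentum precisely to control the projection of the perturbing terms onto the $\ell\le1$ modes, where the non-vanishing mass supplies the spectral gap of size $m\,\sigma^{-3}$ (this is the content of Proposition~\ref{stability_operator_invertible}, and the identification of the remaining integral with the linear momentum is Proposition~\ref{linear_Momentum_small}). The paper's proof of the corollary is exactly this verification that the hypotheses imply the integral conditions \eqref{assumptions_Theorem_1}--\eqref{assumptions_Theorem_2}, followed by the machinery of Theorem~\ref{Existence}.

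The gap is in your existence scheme itself. You propose to obtain each CE-leaf as a \emph{small} normal graph over the corresponding CMC-leaf by a one-step implicit function theorem/Newton iteration in a small $C^{2,\alpha}$-ball, with invertibility of the linearization checked only at the CMC-leaf, i.e.\ at $d\Phi_\sigma(0)$. But the CE-sphere is in general not a small graph over the CMC-sphere: the forcing $\operatorname{tr}_\Sigma\bar k$ is of size $\sigma^{-2}$, and pairing its translational $L^2$-projection (of size $(\kappa+\sigma^{-\varepsilon})\,\sigma^{-1}$ under your hypotheses) with an inverse of norm of order $\sigma^{3}/m$ on the translational modes produces a displacement of order $(\kappa+\sigma^{-\varepsilon})\,\sigma$ --- this is exactly the content of Lemma~\ref{decay_rnu_full} --- which is unbounded as $\sigma\to\infty$ and certainly not small in $C^{2,\alpha}$. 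A Newton iteration confined to a small ball cannot reach this solution; and if you enlarge the ball to radius $\delta\sigma$, convergence requires invertibility of the linearization, with uniform constants, at \emph{all} intermediate off-center surfaces encountered along the iteration, not just at the CMC-leaf. That invertibility is not a soft perturbation statement: it needs the integral conditions re-evaluated on the displaced surfaces and the off-center parameter kept below the threshold of Proposition~\ref{stability_operator_invertible}. This propagation of smallness is precisely what the paper's continuity method in the weight parameter supplies --- the open-closed argument of Notation~\ref{intervalI} and Lemmas~\ref{Psi_extendable}--\ref{I_closed}, with the derivative estimates of Lemmas~\ref{decay_rnu_full} and \ref{Derivative_estimates} guaranteeing that the interpolating surfaces remain in the almost-concentric class of Definition~\ref{Not_of-center} --- and your proposal contains no substitute for it. (Your foliation argument via monotonicity is also only sketched; in the paper the positivity of the radial lapse again uses Proposition~\ref{linear_Momentum_small} and the smallness of the momentum, but this is secondary to the gap above.)
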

The definition of a \lq $\Ck^2_{\frac12+\outve}$-asymptotically flat initial data set\rq\ is explained in Definition~\ref{Ck_asymptotically_flat_initial_data_set}, while the other assumptions are explained in Theorem~\ref{Existence}.\pagebreak[1] We note that the corresponding theorem is true for a temporal foliation (see Definition~\ref{Ck_asymptotically_flat_foliation}) instead of an initial data set, i.\,e.\ every asymptotically flat temporal foliation of a four-dimensional Lorentzian-manifold can be foliated (near infinity) by surfaces with constant expansion with respect to the corresponding timeslice (Theorem~\ref{Regularity_over_time}).\pagebreak[3]\smallskip

As Metzger, we also get a uniqueness result for the CE-spheres (Theorem~\ref{Uniqueness}). Again, we give a simple version -- see Theorem~\ref{Uniqueness} for the general version.
\begin{corollary}[Uniqueness of CE-surfaces -- special case of Theorem~\ref{Uniqueness}]\label{Uniqueness_descriptive}
Let $(\outM,\outg*,\outx,\outzFund,\outmomden*,\outenden)$ satisfy the assumptions of Corollary~\ref{Existence_descriptive}. Let $\M_1,\M_2\hookrightarrow\outM$ be CE-surfaces satisfying \normalbrace{specific} estimates. If $\M_1$ and $\M_2$ have the same, sufficiently small expansion, then they coincide.
\end{corollary}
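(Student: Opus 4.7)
The plan is to express $\M_2$ as a normal graph over $\M_1$, translate the equality of their (constant) expansions into a nonlinear elliptic equation for the graph function, and then show that this function must vanish identically.

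First I would use the \lq\lq specific\rq\rq\ estimates from the hypothesis together with Corollary~\ref{Existence_descriptive} to localise both surfaces. There is a unique leaf $\M<\sigma>$ of the constructed CE-foliation whose constant expansion equals the small common value of $\M_1$ and $\M_2$; the assumed estimates should be strong enough to force each $\M_i$ to lie in a narrow $\Ck^2$-tubular neighbourhood of this leaf. In particular $\M_1$ and $\M_2$ have comparable area radius $\sim\sigma$, and after parametrising by the normal exponential map one obtains a $\Ck^2$-small function $f\colon\M_1\to\R$ whose normal graph over $\M_1$ is exactly $\M_2$.

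Second I would derive and study the governing equation. Expanding the CE-condition for $\M_2$ around $\M_1$ and subtracting that of $\M_1$ yields
\begin{equation*}
 L\,f \;=\; Q(f,\levi f,\levi^2 f),
\end{equation*}
where $L$ is the linearisation at $\M_1$ of the CE-operator $\mc-\outtr\outzFund$ under normal deformation and $Q$ is at least quadratic in $f$ and its derivatives. The operator $L$ is elliptic with principal part $-\laplace$, and on the nearly round $\M_1$ it is a small perturbation of the classical Jacobi operator $-\laplace-2/\sigma^2$, augmented by a mass-correction from $\outric(\nu,\nu)+\volume{\outzFund}^2$ and a first-order drift coming from the variation of $\outtr\outzFund$.

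The heart of the argument, and the main obstacle, is an invertibility estimate for $L$. The symmetric CMC-type part is handled exactly as in Huisken-Yau and Metzger: the translation modes of $-\laplace-2/\sigma^2$ acquire a definite positive eigenvalue shift coming from the positive mass $\mass\neq0$, yielding an invertibility bound with norm $\sim\sigma^{1+\ve}$ in the appropriate weighted spaces. The genuinely new difficulty is the non-self-adjoint first-order drift inherited from the variation of $\outtr\outzFund$: on the translation eigenspace this drift is precisely the obstruction associated with the linear momentum. I would tame it by combining three ingredients simultaneously, namely the $\Ck^1_2$-asymptotic vanishing of $\outzFund$, its $\Ck^0_{2+\ve}$-asymptotic antisymmetry, and the smallness of the ADM-linear momentum, which jointly bound the translation-mode contribution of the drift by a small fraction of the mass-gap and restore coercivity of $L$.

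Finally, the resulting estimate of the form $\|f\|\lesssim\sigma^{1+\ve}\|Lf\|$ in the appropriate weighted norms, combined with a quadratic bound $\|Q(f,\levi f,\levi^2 f)\|\lesssim\sigma^{-\ve}\|f\|^2$, gives $\|f\|\le C\,\sigma\,\|f\|^2$; once the common expansion is sufficiently small (equivalently $\sigma$ sufficiently large) and $\|f\|$ is small by the a priori estimates, this forces $f\equiv0$ and hence $\M_1=\M_2$.
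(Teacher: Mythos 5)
The step you cannot take for granted is the localisation: you assert that the hypotheses ``force each $\M_i$ to lie in a narrow $\Ck^2$-tubular neighbourhood'' of the foliation leaf with the same expansion, but that is essentially the statement to be proved. The ``specific estimates'' in the general version (Theorem~\ref{Uniqueness}) only place $\M_1,\M_2$ in the class $\mathcal A^{\ve,\eta}_\Aradius(\ccenterz,\c_1)$ of Definition~\ref{Not_of-center}, which allows the Euclidean centres to be displaced by $\ccenterz\,\Aradius+\c_1\,\Aradius^{1-\eta}$, i.e.\ by a fixed fraction of the radius, and Proposition~\ref{Regularity_of_surfaces_in_asymptotically_flat_spaces} only gives closeness of each $\M_i$ to \emph{some} round sphere with an a priori unknown centre, not to the leaf $\M<\Hradius>$. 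Hence the graph function $f$ of $\M_2$ over $\M_1$ may a priori be of size $\ccenterz\,\Hradius$ (or $\c_1\,\Hradius^{1-\eta}$), and your contraction then does not close: by Proposition~\ref{stability_operator_invertible} the linearised operator has eigenvalues of size only $\nicefrac{6\,\vert\mHaw\vert}{\Hradius^3}$ on the translational modes, so its inverse costs a factor of order $\nicefrac{\Hradius^3}{\vert\mHaw\vert}$ (not $\Hradius^{1+\ve}$), while the quadratic remainder of the expansion map for a translation-sized graph is of order $\nicefrac{\Vert f\Vert^2_{\Wkp^{2,\infty}}}{\Hradius^3}$; this yields $\Vert f\Vert\lesssim\nicefrac{\Vert f\Vert^2}{\vert\mass\vert}$, which forces $f\equiv0$ only if the two surfaces are already within a distance of the order of the mass -- far stronger than membership in $\mathcal A^{\ve,\eta}_\Aradius(\ccenterz,\c_1)$ gives, even for $\ccenterz=0$. (Two smaller points: the paper assumes only $\mass\neq0$, so there is no ``positive eigenvalue shift from positive mass''; the relevant coercivity is through $\vert\mHaw\vert$. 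And the drift terms coming from $\outzFund$ are tamed in Proposition~\ref{stability_operator_invertible} via the integral conditions \eqref{assumptions_Theorem_1}, \eqref{assumptions_Theorem_2}, not by pointwise antisymmetry alone.)

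The paper never compares two CE-surfaces directly. Theorem~\ref{Uniqueness} is proved by rerunning the open--closed argument of Section~\ref{existence} in the weight $\gewicht$, started at the given CE-surface at $\gewicht=\pm1$: invertibility of the weighted pseudo stability operator (Proposition~\ref{stability_operator_invertible}) plus the implicit function theorem and the a priori estimates of Lemmas~\ref{Psi_extendable}--\ref{I_closed} deform it through surfaces of constant $\gewicht$-weighted expansion down to $\gewicht=0$; the resulting CMC sphere lies in the admissible class and is identified with the CMC leaf by Theorem~\ref{CMC_Uniqueness}, and the uniqueness of the deformation path (Lemma~\ref{Psi_extendable}) then identifies the given CE-surface with the leaf constructed in Theorem~\ref{Existence}. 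The corollary is immediate from this: $\M_1$ and $\M_2$ both equal the leaf $\M<\Hradius>[\pm]$ with $\Hradius$ fixed by the common expansion $\nicefrac{{-}2}\Hradius$. It is exactly this detour through $\gewicht=0$ and the quoted CMC uniqueness theorem, whose proof is built to handle off-centre surfaces, that replaces the localisation you assumed; a direct graph argument would first need an estimate showing two CE-surfaces of equal expansion are $o(\vert\mass\vert)$-close, and no such estimate is available short of the uniqueness theorem itself.
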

The precise formulation of the \lq(specific) estimates\rq\ can be found in Theorem~\ref{Uniqueness}. Furthermore, we can again reduce the assumptions on the initial data set -- see Theorem~\ref{Uniqueness}. \pagebreak[3]\smallskip

Finally, we study how these CE-surfaces evolve in time under the Einstein equations (Theorem~\ref{Invariance}): We prove that the CE-spheres are in a well-defined sense (asymptotically) independent of time if the ADM-linear momentum vanishes. This is to be expected as the author proved that the CMC-leaves (asymptotically) evolve in time by translating in direction of the fraction of the (ADM) linear momentum and the (ADM) mass \cite[Theorem 4.1]{nerz2013timeevolutionofCMC} and the CE-spheres are asymptotically just shifts of the CMC spheres (due to the results in Section~\ref{existence}) -- and it seems appropriate to assume that this shift is (asymptotically) independent of time. To the best knowledge of the author, this is the first time that any evolution result is proven for the CE-leaves. Theorem~\ref{Invariance} implies the following (more descriptive) corollary.
\begin{corollary}[Time invariance of CE-surfaces -- special case of Theorem~\ref{Invariance}]\label{Invariance_descriptive}
Let $(\outM[t],\outg[t]*,\outzFund[t],\outenden[t],\outmomden[t])_t$ be a \normalbrace{orthogonal}** $\Ck^2_{\frac12+\outve}$-asymptotically flat temporal foliation solving the Einstein equations in asymptotic vacuum. Assume that each of these initial data sets satisfies the assumptions of Corollary~\ref{Existence_descriptive}. If the time-lapse function is $\Ck^2_{\frac12+\outve}$-asymp\-totic to $1$ and \normalbrace{asymp\-to\-ti\-cally} symmetric, then the leaves of the CE-foliation evolve \normalbrace{asymp\-to\-ti\-cally} in time by a shift in time direction \normalbrace+{orthogonal to each timeslice $\outM[t]$}.\pagebreak[2]
\end{corollary}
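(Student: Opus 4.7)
The plan is to reduce Corollary~\ref{Invariance_descriptive} to the more general Theorem~\ref{Invariance}, by checking that the temporal foliation hypotheses supply the structure required there, and then to isolate the geometric mechanism that forces the asymptotic time-independence. First I would unpack the orthogonal $\Ck^2_{\frac12+\ve}$-asymptotically flat temporal foliation assumption to show that at every time $t$ the slice $(\outM[t],\outg[t]*,\outzFund[t],\outmomden[t]*,\outenden[t])$ is itself a $\Ck^2_{\frac12+\ve}$-asymptotically flat initial data set meeting the hypotheses of Corollary~\ref{Existence_descriptive}; in particular, $\outzFund[t]$ is $\Ck^0_{2+\ve}$-asymptotically anti-symmetric and $\Ck^1_2$-small, the latter being a consequence of asymptotic vacuum combined with orthogonal slicing and the asymptotic symmetry of the lapse. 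Applying Corollary~\ref{Existence_descriptive} slice by slice then produces a CE-foliation $(\M<\Hradius>[t])_\Hradius$ of $\outM[t]$.

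Next, I would use the construction in Section~\ref{existence}: each CE-leaf $\M<\Hradius>[t]$ is represented as a graph over a shifted copy of the corresponding CMC-leaf on the same slice, with an asymptotic shift vector $\vec z[t]$ in the $\outx$-coordinates determined by integral quantities built from $\outzFund[t]$ and $\outmomden[t]*$. This produces a one-parameter family $t \mapsto \vec z[t]$, and the evolution of the CE-leaves in $t$ thereby decomposes into two contributions: the evolution of the underlying CMC-foliation, and the time-derivative of $\vec z[t]$.

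The first contribution is controlled by \cite[Theorem 4.1]{nerz2013timeevolutionofCMC}, which asserts that the CMC-leaves asymptotically translate in the $\outx$-coordinates at velocity proportional to the quotient of the ADM-linear momentum by the ADM-mass. Under asymptotic vacuum together with orthogonal slicing and a lapse asymptotic to $1$ and asymptotically symmetric, the ADM-linear momentum vanishes asymptotically, so the CMC-translation is trivial. For the second contribution I would differentiate the integral expression for $\vec z[t]$ along the lapse-flow $\alpha\tv$ and use the same parity assumptions, together with the Einstein evolution equations on each slice, to show that the drift of $\vec z[t]$ is asymptotically trivial as well. Combining the two, the CE-leaves remain asymptotically at the same $\outx$-position as $t$ varies, which in the temporal picture is precisely the asserted shift in the time-direction orthogonal to each timeslice $\outM[t]$.

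The hard part will be the last step: the potentially non-trivial first-order drift of the integral quantities defining $\vec z[t]$ must be killed using the parity properties of the lapse together with the evolution equations. The matching of decay classes and regularity to the hypotheses of Theorem~\ref{Invariance} is then essentially mechanical bookkeeping.
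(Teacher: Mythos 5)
Your opening sentence (reduce Corollary~\ref{Invariance_descriptive} to Theorem~\ref{Invariance} by verifying its hypotheses) is exactly what the paper does, but the rest of your proposal abandons that reduction and instead tries to prove the statement by the heuristic that the paper explicitly presents only as motivation: CE-leaves are asymptotically shifted CMC-leaves, the CMC-leaves evolve by \cite[Theorem~4.1]{nerz2013timeevolutionofCMC}, and the shift should not drift. The paper's actual argument for Theorem~\ref{Invariance} runs entirely through the pseudo stability operators of Section~\ref{er_stability_operators}: the spatial normal speed $\rnu$ of a CE-leaf under the time evolution is characterized by $\jacobipm{\pm}\rnu={-}\jacobit\ralpha$ (Proposition~\ref{Characterization_evolution}), and the invertibility and eigenvalue estimates of Proposition~\ref{stability_operator_invertible}, together with \eqref{assumptions_Theorem_1}, \eqref{assumptions_Theorem_2} and Proposition~\ref{linear_Momentum_small}, bound the translation part of $\rnu$ and yield \eqref{evolution_linear_momentum}, i.\,e.\ the asserted pure shift in time direction.

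The concrete gaps in your route are the following. First, there is no integral expression for the CE-to-CMC shift $z(t)$ anywhere in Section~\ref{existence}: the CE-sphere is obtained from the CMC-sphere by an implicit-function-theorem deformation in the weight $\gewicht\in\interval*-1*1$, about which only the bound $\Vert\rnu\Vert_{\Wkp^{2,p}(\M)}\le D\,\Hradius^{1+\frac2p}$ of Lemma~\ref{decay_rnu_full} is known; so your key step ``differentiate the integral expression for $z(t)$ in $t$'' has nothing to differentiate, and making it rigorous would force you to commute $\partial*_t$ with the $\gewicht$-deformation and to control the result by exactly the operators $\jacobipm{\pm}$ and $\jacobit$ that your plan avoids --- this is the actual content of Theorem~\ref{Invariance}, not mechanical bookkeeping. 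Second, asymptotic vacuum, orthogonal slicing and an asymptotically symmetric lapse do \emph{not} imply that the ADM linear momentum vanishes asymptotically; the corollary (via Corollary~\ref{Existence_descriptive}) only assumes it is sufficiently small, and accordingly the conclusion is quantified by $C(\outck+\Hradius^{{-}\outve})$ in \eqref{evolution_linear_momentum} rather than by zero momentum; likewise the $\Ck^1_2$-smallness of $\outzFund[t]$ is an assumption inherited from Corollary~\ref{Existence_descriptive}, not a consequence of vacuum plus orthogonality. Third, even granting small momentum, transferring the CMC evolution result to the CE-leaves requires a $t$-uniform quantitative comparison of the two foliations, which is again precisely what Proposition~\ref{Characterization_evolution} combined with Proposition~\ref{stability_operator_invertible} supplies; without it your two-contribution decomposition does not close.
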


\textbf{Acknowledgment.}
The author wishes to express gratitude to Gerhard Huisken for suggesting this topic and many inspiring discussions. Further thanks is owed to Lan-Hsuan Huang for suggesting the use of the Bochner-Lichnerowicz formula in this setting (see Proposition~\ref{stability_operator_invertible}).
Finally, thanks goes to Carla Cederbaum for exchanging interesting thoughts about CMC- and CE-foli\-ations and about the interpretation of the integral quantities (see \eqref{assumptions_Theorem_1}, \eqref{assumptions_Theorem_2}, and Proposition~\ref{linear_Momentum_small}).\pagebreak[3]

\section*{Structure of the paper and main proof structure}
In Section~\ref{Assumptions_and_notation}, we fix the notations and basic assumptions made in this paper. Note that our assumptions on the decay of the second fundamental form is more restrictive than the one used in parts of the literature, e.\,g.\ \cite{nerz2013timeevolutionofCMC,Huang__Foliations_by_Stable_Spheres_with_Constant_Mean_Curvature}, but less restrictive than other others, e.\,g.\ \cite{christodoulou1993global,metzger2007foliations}. In Section~\ref{er_stability_operators}, we characterize the linearization of the map mapping a function to the expansion of its graph. Furthermore, we explain one of the main ideas of the following proofs. The existence and uniqueness theorems are stated in full detail and proven in Section~\ref{existence}. In the last main section (Section~\ref{invariance}), we state and prove the evolution theorem.\pagebreak[3]\smallskip

As our main proof structure for the existence and uniqueness theorems is the same as the one used by Metzger in \cite{metzger2007foliations}, we briefly explain his proof. He defines an interval $\intervalI\subseteq\interval*0*1$ to be the set of all artificial times $s\in\interval*0*1$ such that there exists a foliation by CE-surfaces for $(\outM,\outg[s]*,s\,\zFund*,s\,\outmomden*,\outenden[s]*)$ instead of $(\outM,\outg,\zFund*,\outmomden*,\outenden)$ and proves by an open-closed argument that it is equal to $\interval*0*1$. Here, $\outg[s]*$ denotes the artificial metric defined by
\[ \outg[s]_{ij} := \schwarzoutg_{ij} + s\,(\outg_{ij} - \schwarzoutg_{ij}), \qquad\schwarzoutg_{ij} := (1+\frac{\mass}{2\rad})^4\,\eukoutg_{ij} \]
and $2\,\outenden[s]:=\outsc[s] - \vert s\,\outzFund\vert^2_{\outg[s]*} + s^2\,\outmc^2$ is an artificial energy-density. As it is well-known that there is a CMC-foliation of the Schwarzschild standard-slice, he gets $0\in\intervalI$. Hence, $\intervalI$ is non-empty. He proves with a convergence argument that $\intervalI$ is also closed. In order to prove that $\intervalI$ is open, he uses the implicit function theorem: Let therefore be $\{\M[s_0]<r>\}_{r>r_0}$ be the CE-foliation of  $(\outM,\outg[s_0]*,s_0\,\zFund*,s_0\,\outmomden*,\outenden[s_0]*)$ with $s_0\in\intervalI$ and define the map
\[ (\,\!^{\boldsymbol\cdot}\textbf H \pm\,\boldsymbol\cdot\,\textbf P)^{\nu} : \interval*0*1\times\Wkp^{2,p}(\M[s_0]<r>) \to \Lp^p(\M[s_0]<r>) : (s,f) \mapsto (\H[s] \pm s\,\tr[s]\,\outzFund)(\graphnu\nu f) \]
for every $r>r_0$ and any $p>2$, where $(\H[s] \pm s\,\tr[s]\,\outzFund)(\graphnu\nu f)$ denotes the expansion of $\graphnu\nu f$ with respect to $\outg[s]*$ and $s\,\zFund*$ (see Section~\ref{er_stability_operators} for more detailed information). Assume for a moment that the Fr\'echet derivative of this map with respect to the second component at $(s_0,0)$ is invertible. The implicit function theorem then implies that $\M[s_0]<r>$ can be deformed to a surface $\M[s]<r>$ which is a CE-surface with respect to $(\outM,\outg[s]*,s\,\zFund*,s\,\outmomden*,\outenden[s]*)$ if $\vert s-s_0\vert$ is small enough. This proves the openness of $\intervalI$ under the assumption of invertibility of the Fr\'echet derivative of the above map. To deduce this invertibility, he proves multiple estimates for the distance of such a CE-surface to the origin and the trace free part $\zFundtrf[s\,]<r>*$ of the second fundamental form $\zFund[s]<r>*$ of the surface $\M[s]<r>\hookrightarrow(\outM,\outg[s]*)$. Here, he uses the concrete form of the Ricci-curvature of the Schwarzschild metric and the assumed smallness of $\outzFund*$.\smallskip

We use the same approach, but replace three main arguments:
\begin{itemize}[noitemsep,nosep]
\item as we know that the CMC-foliation of $(\outM,\outg*,\outx)$ exists \cite[Thm~3.1]{nerz2014CMCfoliation}, we can fix the metric $\outg*$ instead of using the above family of metric $\{\outg[\tau]*\}_\tau$;\footnote{Note that the proof of \cite[Thm~3.1]{nerz2014CMCfoliation} uses the explained methode including the family of metric $\{\outg[s]*\}_s$.}
\item we get the cruical estimate for the distance of $\M[\tau]<\Hradius>$ to the coordinate  origin by estimating its $\tau$-derivative (see the Lemmas~\ref{decay_rnu_full} and \ref{Derivative_estimates});
\item to conclude the invertibility of the Fr\'echet derivative explained above, we use the Bochner-Lichnerowics formula and smallness of specific integral quantities of $\outzFund*$ instead of the concrete form of the Ricci-curvature of the Schwarzschild metric and the pointwise smallness of $\outzFund$ (see Proposition~\ref{stability_operator_invertible}).\pagebreak[3]
\end{itemize}\egroup

\section{Assumptions and notation}\label{Assumptions_and_notation}
In this section, we describe the notations and decay assumptions used in this paper. The notations used are the same as used by the author in \cite{nerz2013timeevolutionofCMC,nerz2014CMCfoliation}. The assumptions on the Riemannian manifold are identical to the one e.\,g.\ described in \cite[Sec.~1]{nerz2014CMCfoliation}. The assumptions made on the other quantities of the initial data set (respectively temporal foliation) are described in Definition~\ref{Ck_asymptotically_flat_initial_data_set} (respectively Definition~\ref{Ck_asymptotically_flat_foliation}).

In order to study temporal foliations of four-dimensional spacetimes by three-dimensional spacelike slices and foliations (near infinity) of those slices by two-dimensional spheres, we will have to deal with different manifolds (of different or the same dimension) and different metrics on these manifolds, simultaneously. To distinguish between them, all four-dimensional quantities like the Lorentzian spacetime $(\uniM,\unig*)$, its Ricci and scalar curvatures $\uniric$ and $\unisc$, and all other derived quantities will carry a hat. In contrast, all three-dimensional quantities like the spacelike slices $(\outM,\outg*)$, its second fundamental form $\outzFund*$, its Ricci, scalar, and mean curvature $\outric*$, $\outsc$, and $\outH:=\outtr\,\outzFund$, its future-pointing unit normal $\outnu$, and all other derived quantities carry a bar, while all two-dimensional quantities like the CMC leaf $(\M,\g*)$, its second fundamental form $\zFund*$, the trace-free part of its second fundamental form $\zFundtrf*=\zFund*-\frac12(\tr\zFund*)\g*$, its Ricci, scalar, and mean curvature $\ric*$, $\scalar$, and $\H=\tr\zFund*$, its outer unit normal $\nu$, and all other derived quantities carry neither.

In Sections~\ref{er_stability_operators} and \ref{invariance}, the upper left index denotes the time-index $t$ of the \lq current\rq\ timeslice. In Section~\ref{existence}, it denotes the weight $\gewicht$. The only exceptions are the upper left indices $\euclideane$ and $\schws$ which refer to Euclidean and Schwarz\-schild quantities, respectively.

If different two-dimensional manifolds in one three-dimensional initial data set $(\outM,\outg*,\outzFund*,\outmomden*,\outenden)$ are involved, then the lower left index always denotes the radius $\rradius$ or curvature index $\Hradius$ of the current leaf $\M<\Hradius>$, i.\,e.\ the leaf with expansion $\H<\Hradius>\pm\tr<\Hradius>\outzFund=\nicefrac{{-}2}\Hradius$, where $\pm\in\{{-}1,+1\}$ always denotes a fixed sign. Furthermore, the two-dimensional manifolds and metrics (and other quantities) \lq inherit\rq\ the upper left index of the corresponding three-dimensional manifold. We abuse notation and suppress these indices, whenever it is clear from the context which metric we refer to.

Here, we interpret the second fundamental form and the normal vector of a hypersurface as quantities of the surface (and thus as \lq lower\rq-dimensional). For example, if $\outM[t]$ is a hypersurface in $\uniM$, then $\outnu[t]$ denotes its unit normal (and \emph{not} ${}^{t\!}\unisymbol\vartheta$). The same is true for the \lq lapse function\rq\ and the \lq shift vector\rq\ of a hypersurfaces arising as a leaf of a given deformation or foliation.\pagebreak[1]

Finally, we use upper case Latin indices $\ii$, $\ij$, $\ik$, and $\il$ for the two-dimensional range $\lbrace2,3\rbrace$ and lower case Latin indices $\oi$, $\oj$, $\ok$, and $\ol$ for the three-dimensional range $\lbrace 1,2,3\rbrace$. The Einstein summation convention is used accordingly. \pagebreak[3]\smallskip

As mentioned, we frequently use foliations and evolutions. These are infinitesimally characterized by their lapse functions and their shift vectors.
\begin{definition}[Lapse functions, shift vectors]
Let $\theta>0$, $\sigma_0\in\R$ be constants, $I\supseteq\interval{\Hradius_0-\theta\,\Hradius}{\Hradius_0+\theta\,\Hradius}$ be an interval, and $(\outM,\outg*)$ be a Riemannian manifold. A smooth map $\tensor\Phi:I\times\M\to\outM$ is called \emph{deformation} of the closed hypersurface $\M=\M<\Hradius_0>=\Phi(\Hradius_0,\M)\subseteq\outM$, if $\tensor\Phi<\Hradius>({\hspace{.05em}\cdot\hspace{.05em}}):=\Phi(\Hradius,{\hspace{.05em}\cdot\hspace{.05em}})$ is a diffeomorphism onto its image $\M<\Hradius>:=\tensor\Phi<\Hradius>(\M)$ and $\tensor\Phi<\Hradius_0>\equiv\id_{\M}$. The decomposition of $\partial*_\Hradius\Phi$ into its normal and tangential parts can be written as
\[ \partial[\Hradius]@\Phi = \rnu<\Hradius>\,\nu<\Hradius> + \rbeta<\Hradius>, \]
where $\nu<\Hradius>$ is the outer unit normal to $\M<\Hradius>$, and $\rbeta<\Hradius>\in\X(\M<\Hradius>)$ is a vector field. The function $\rnu<\Hradius>:\M<\Hradius>\to\R$ is called the \emph{lapse function} and $\rbeta<\Hradius>$ is called the \emph{shift} of $\Phi$. If $\Phi$ is a diffeomorphism (resp.\ diffeomorphism onto its image), then it is called a \emph{foliation} (resp.\ a \emph{local foliation}).\pagebreak[1]

In the setting of a Lorentzian manifold $(\uniM,\unig*)$ and a non-compact, spacelike hypersurface $\outM\subseteq\uniM$, the notions of deformation, foliation, lapse $\ralpha$, and shift $\outbeta$ are defined correspondingly.\pagebreak[3]
\end{definition}

As there are different definitions of \lq asymptotically flat\rq\ in the literature, we now give the decay assumptions used in this paper. 
To rigorously define these and to shorten the statements in the following, we distinguish between the case of a Riemannian manifold, the one of a initial data set, and the one of a temporal foliation.
\begin{definition}[\texorpdfstring{$\Ck^2_{\frac12+\outve}$}{C-2}-asymptotically flat Riemannian manifolds]\label{C2-asymptotically_flat_RM}
Let $\outve\in\interval0*{\nicefrac12}$ be a constant and let $(\outM,\outg*)$ be a smooth Riemannian manifold. The tuple $(\outM,\outg*,\outx)$ is called \emph{$\Ck^2_{\frac12+\outve}$-asymptotically flat Riemannian manifold} if $\outx:\outM\setminus\overline L\to\R^3\setminus\overline{B_1(0)}$ is a smooth chart of $\outM$ outside a compact set $\overline L\subseteq\outM$ such that
\begin{equation*}
 \vert\outg_{ij}-\eukoutg_{ij}\vert + \rad\vert\outlevi_{ij}^k\vert + \rad^2\vert\outric_{ij}\vert + \rad^{\frac52}\vert\outsc\vert \le \frac\oc{\rad^{\frac12+\outve}} \qquad\forall\,i,j,k\in\{1,2,3\} \labeleq{decay_g}
\end{equation*}
holds for some constant $\oc\ge0$, where $\eukoutg*$ denotes the Euclidean metric. Arnowitt-Deser-Misner defined the \emph{\normalbrace{ADM-}mass} of such a manifold $(\outM,\outg*,\outx)$ by
\begin{equation*} \mass_{\text{ADM}} := \lim_{\rradius\to\infty} \frac1{16\pi}\sum_{\oj=1}^3 \int_{\sphere^2_\rradius(0)}(\partial[\outx]_\oj@{\g_{\oi\oj}}-\partial[\outx]_\oi@{\g_{\oj\oj}})\,\nu<\rradius>^\oi \d\mug<\rradius>  \label{Definition_of_mass_ADM}, \end{equation*}
where $\nu<\rradius>$ and $\mug<\rradius>$ denote the outer unit normal and the area measure of $\sphere^2_\rradius(0)\hookrightarrow(\outM,\outg*)$ \cite{arnowitt1961coordinate}.
\end{definition}

In the literature, the ADM-mass is characterized using the curvature of $\outg*$:
\begin{equation} \mass := \lim_{\rradius\to\infty} \frac{{-}\rradius}{8\pi} \int_{\sphere^2_\rradius(0)} \outric*(\nu<\rradius>,\nu<\rradius>) - \frac\outsc2 \d\mug<\rradius>,\pagebreak[3] \label{Definition_of_mass}\end{equation}
see the articles of Ashtekar-Hansen, Chru\'sciel, and Schoen \cite{ashtekar1978unified,schoen1988existence,chrusciel1986remark}.
Miao-Tam recently gave a proof of this characterization, $\mass_{\text{ADM}}=\mass$, in the setting of a $\Ck^2_{\frac12+\outve}$-asymptotically flat manifold \cite{miao2013evaluation}.\footnote{The author thank Carla Cederbaum for bringing his attention to Miao-Tam's article \cite{miao2013evaluation}.} We recall that this mass is also characterized by
 \begin{equation*} \mass = \lim_{\rradius\to\infty} \mHaw(\sphere^2_\rradius(0)) \tag{\ref{Definition_of_mass}'}\label{Definition_of_mass_dash}. \end{equation*}
This can be seen by a direct calculation using the Gau\ss\ equation, the Gau\ss-Codazzi equation, and the decay assumptions on metric and curvatures. Here, $\mHaw(\sphere^2_\rradius(0))$ denotes the Hawking-mass which is for any closed hypersurface $\M\hookrightarrow(\outM,\outg*)$ defined by
\[ \mHaw(\M) := \sqrt{\frac{\volume{\M}}{16\pi}}(1-\frac1{16\pi}\int_{\M}\H^2\d\mug), \]
where $\H$ and $\mug$ denote the mean curvature and measure induced on $\M$, respectively \cite{hawking2003gravitational}.
\begin{definition}[\texorpdfstring{$\Ck^2_{\frac12+\outve}$}{C-2}-asymptotically flat initial data sets]\label{Ck_asymptotically_flat_initial_data_set}
Let $\outve\in\interval0*{\nicefrac12}$ be a constant and let $(\outM,\outg*,\outzFund,\outmomden*,\outenden)$ be an initial data set, i.\,e.\ $(\outM,\outg*)$ is a Riemannian manifold, $\outzFund$ a symmetric $(0,2)$-tensor, $\outenden$ a function, and $\outmomden$ a one-form on $\outM$, respectively, satisfying the \emph{Einstein constraint equations} \eqref{constraint_equations}.
The tuple $(\outM,\outg*,\outx,\outzFund,\outmomden*,\outenden)$ is called \emph{$\Ck^2_{\frac12+\outve}$-asymptotically flat} if $(\outM,\outg*,\outx)$ is a $\Ck^2_{\frac12+\outve}$-asymptotically flat Riemannian manifold and\vspace{-.25em}
\begin{equation*}
 \rad\,\vert\hspace{.05em}\outzFund_{ij}\vert + \rad^2\,\vert \partial[\outx]_k@{\outzFund_{ij}} \vert + \rad^{\frac52}\,\vert\outmomden_i\vert \le \frac{\oc}{\rad^{\frac12+\outve}} \qquad\forall\,i,j,k\in\{1,2,3\} \labeleq{decay_outzFund},
\end{equation*}
holds in the coordinate system $\outx$.\pagebreak[1]
In this setting, the \emph{second fundamental form $\outzFund*$ vanishes $\Ck^1_2$-asymptotically} and $\mathcal C^0_{2+\outve}$-asymptotically anti-symmetric if additionally
\[ \vert\hspace{.05em}\outzFund_{ij}\vert + \rad\,\vert \partial[\outx]_k@{\outzFund_{ij}} \vert \le \frac{\oc}{\rad^2}
		\qquad\text{and}\qquad
		\big|\hspace{.05em}\outzFund_{ij}(x) + \outzFund_{ij}({-}x)\big| \le \frac{\oc}{\rad^{2+\outve}}, \]
respectively.
\end{definition}

\begin{definition}[\texorpdfstring{$\Ck^2$}{C-2}-asymptotically flat temporal foliations]\label{Ck_asymptotically_flat_foliation}
Let ${\outve}>0$. The level sets $\outM[t]:=\time^{-1}(t)$ of a smooth function $\time$ on a four-dimensional smooth Lorentzian manifold $(\uniM,\unig*)$ are called a (orthogonal) \emph{$\Ck^2_{\frac12+\outve}$-asymptotically flat temporal fo\-li\-a\-tion}, if the gradient of $\time$ is everywhere time-like, i.\,e.\ $\unig*(\unilevi*\time,\unilevi*\time)<0$, and there is a chart $(\time,\unisymbol x):\unisymbol U\subseteq\uniM\to\R\times\R^3$ of $\uniM$, such that $(\outM[t],\outg[t]*,{\outx[t]}:=\unisymbol x|_{\outM[t]},\outzFund[t]*,\outenden[t],\outmomden[t])$ is a $\Ck^2_{\frac12+\outve}$-asymptotically flat initial data set for all $t$ (with not necessarily uniform constants $\oc[t]$), $\partial*_t|_{\outM[t]}$ is orthogonal to $\outM[t]$ for every $t$,\footnote{Here, the orthogonality of $\partial*_t|_{\outM[t]}$ to $\outM[t]$ is in fact not an additional assumption, as any coordinate system $(t,\outx)$ can be deformed (using flows in direction of $\unilevi\time$) such that this orthogonality holds for the new coordinate system.}
and the time-lapse function $\ralpha[t]:=\vert\unig*(\partial*_t,\partial*_t)\vert^{\nicefrac12}\in\Ck^1(\outM[t])$ satisfies
\begin{equation*}
 \vert\ralpha[t]-1\vert + \rad\,\vert\partial[\outx]_i@{\hspace{.05em}\ralpha[t]}\vert \le \frac{\oc[t]}{\rad^{\frac12+\outve}}, \qquad \forall\,i\in\{1,2,3\}. \labeleq{decay_lapse}
\end{equation*}
Here, the corresponding second fundamental form ${\outzFund[t]*}$, the energy-density ${\outenden[t]}$, and the momentum density ${\outmomden[t]}$ are defined by
\begin{equation*}
 \outzFund[t]_{ij} := \frac{{-}1}{2\,\ralpha[t]}\partial[t]@{\outg[t]_{ij}}, \quad\,
 \outenden[t] := \uniric*(\outnu[t],\outnu[t])+\frac\unisc2, \quad\,
 \outmomden[t]* := \uniric*(\outnu[t],\cdot), \quad\,
 \left.\partial*_\time\right|_{\outM[t]} = \ralpha[t]\;\outnu[t],
\end{equation*}
respectively, where $\outnu[t]$ is the future-pointing unit normal to $\outM[t]$. If the constants ${\oc[t]}$ of the above decay assumptions can be chosen independently of $t$, then the temporal fo\-li\-a\-tion is called \emph{uniformly $\Ck^2_{\frac12+\outve}$-asymptotically flat}.\pagebreak[1]
\end{definition}
\begin{remark}[Weaker decay assumptions]
We note that all the following results remain true in the case that the above decay assumptions are only satisfied for $\rad\,f(\rad)$ instead of $\rad^{-\ve}$, i.\,e.\ if we replace the right hand side of \eqref{decay_g}, \eqref{decay_outzFund}, and \eqref{decay_lapse} by $\rad^{\frac12}\,f(\rad)$, where $f\in\Lp^1(\interval1\infty)$ is some smooth function with $\rad\,f(\rad)\to0$ for $\rad\to\infty$.\footnote{Furthermore, we have to assume $0\ge f'(\rad)\ge\nicefrac{{-}1}\rad$, but if the above assumptions are satisfied for some $f$, then there exists a $\tilde f$ satisfying the above assumptions and this additional assumptions.} Furthermore, we can replace our pointwise assumptions by Sobolev assumptions, namely $\outg*-\eukoutg*\in\Wkp^{3,p}_{\frac12}(\R^3\setminus B_1(0))$, $\outsc\in\Lp^1(\outM)$, and $\outzFund\in\Wkp^{2,p}(\R^3\setminus B_1(0))$, where $p>2$ and where we used Bartnik's definition of weighted Sobolev spaces \cite{bartnik1986mass} -- compare with \cite[Rem.~1.2]{nerz2014CMCfoliation}.\pagebreak[3]
\end{remark}

Using one of DeLellis-M\"uller's results \cite[Thm~1.1]{DeLellisMueller_OptimalRigidityEstimates}, the author proved in \cite[Prop.~2.4]{nerz2014CMCfoliation} (see Proposition~\ref{Regularity_of_surfaces_in_asymptotically_flat_spaces}) that every closed hypersurface which is \lq almost\rq\ concentric and has \lq almost\rq\ constant mean curvature is \lq almost\rq\ umbilic, see Proposition~\ref{Regularity_of_surfaces_in_asymptotically_flat_spaces}. Here, we call a surface \lq almost concentric\rq\ if it is an element of the following class of surfaces.
\begin{definition}[\texorpdfstring{$\mathcal C_\eta(\ccenterz)$-a}Asymptotically concentric surfaces\texorpdfstring{ ($\mathcal A^{\ve,\eta}_\Aradius(\ccenterz,\c_1)$)}\relax]\label{Not_of-center}
Let $(\outM,\outg*,\outx)$ be a $\Ck^2_{\frac12+\outve}$-asymptotically flat three-dimensional Riemannian manifold and $\eta\in\interval0*1$, $\ccenterz\in\interval*01$, and $\c_1\ge0$ be constants. A closed, oriented hypersurfaces $(\M,\g*)\hookrightarrow(\outM,\outg*)$ of genus $\genus$ is called \emph{$\mathcal C_\eta(\ccenterz)$-asymptotically concentric with area radius $\Aradius=\sqrt{\nicefrac{\volume{\M}}{4\pi}}$} \normalbrace*{and constant $\c_1$}*, in symbols $\M\in\mathcal A^{\ve,\eta}_\Aradius(\ccenterz,\c_1)$ if
\begin{equation*}
 \vert\centerz\,\vert \le \ccenterz\,\Aradius+\c_1\,\Aradius^{1-\eta},\quad\
 \Aradius^{4+\eta} \le \min_{\M}\rad^{5+2\outve},\quad\
 \int_{\M}\H^2 \d\mug - 16\pi\,(1-\genus) \le \frac{\c_1}{\Aradius^\eta}, \labeleq{Not_of-center_ass}
\end{equation*}
where $\centerz=(\centerz_i)_{i=1}^3\in\R^3$ denotes the \emph{Euclidean coordinate center} defined by
\[ \centerz_i := \fint_{\M}\outx_i\d\eukmug := \frac1{\volume{\M}}\int_{\M}\outx_i\d\eukmug, \]
where $\eukmug$ denotes the measure induced on $\M$ by the Euclidean metric $\eukoutg*$ (with respect to $\outx$).\pagebreak[3]
\end{definition}
As it results in additional technical difficulties, we note that we cannot restrict ourselves to \lq really\rq\ asymptotically concentric surfaces, i.\,e.\ $\mathcal C_1(0)$-asymptotically concentric surfaces, as the CMC-surfaces used in this work are not necessarily within this class \cite{cederbaumnerz2013_examples}.

Finally, we specify the definitions of Lebesgue and Sobolev norms, we will use throughout this article.
\begin{definition}[Lebesgue and Sobolev norms]
For every compact two-dimensional Riemannian manifold $(\M,\g*)$ without boundary, the \emph{Lebesgue norms} are defined by
\[ \Vert T\Vert_{\Lp^p(\M)} := (\int_{\M} \vert T\vert_{\g*}^p \d\mug)^{\frac1p}\quad\forall\,p\in\interval*1\infty, \qquad \Vert T\Vert_{\Lp^\infty(\M)} := \mathop{\text{ess\,sup}}\limits_{\M}\,\vert T\vert_{\g*}, \]
where $T$ is any measurable function (or tensor field) on $\M$ and $\mug$ denotes the measure induced by $\g*$. Correspondingly, $\Lp^p(\M)$ is defined to be the set of all measurable functions (or tensor fields) on $\M$ for which the $\Lp^p$-norm is finite. If $\Aradius:=\sqrt{\nicefrac{\volume{\M}}{4\pi}}$ denotes the \emph{area radius} of $\M$, then the \emph{Sobolev norms} are defined by
\[ \Vert T\Vert_{\Wkp^{k+1,p}(\M)} := \Vert T\Vert_{\Lp^p(\M)} + \Aradius\,\Vert\levi*T\Vert_{\Wkp^{k,p}(\M)}, \qquad \Vert T\Vert_{\Wkp^{0,p}(\M)} := \Vert T\Vert_{\Lp^p(\M)}, \]
where $k\in\N_{\ge0}$, $p\in\interval*1*\infty$, and $T$ is any measurable function (or tensor field) on $\M$ for which the $k$-th (weak) derivative exists. Correspondingly, $\Wkp^{k,p}(\M)$ is the set of all functions (or tensors fields) for which the $\Wkp^{k,p}(\M)$-norm is finite. Furthermore, $\Hk^k(\M)$ denotes $\Wkp^{k,2}(\M)$ for any $k\ge1$ and $\Hk(\M):=\Hk^1(\M)$.\pagebreak[3]
\end{definition}

\section{Pseudo stability operators of the expansion}\label{er_stability_operators}
In this section, we assume that $(\uniM,\unig*)$ is a Lorentzian manifold and that $\time$ is a smooth regular function on $\uniM$ such that its level sets $\outM[t]:=\time^{-1}(t)$ form a $\Ck^2_{\frac12+\outve}$-as\-ymp\-to\-tic\-al\-ly flat temporal foliation (for some $\ve>0$) -- with respect to a fixed chart $\unisymbol x$.\footnote{In fact, we do not need asymptotically flatness in this section, but only that $\outM[t]$ are spacelike hypersurfaces foliating $\uniM$ smoothly.} In particular, every level set $(\outM[t],\outg[t]*,\outx[t],\outzFund[t],\outenden[t],\outmomden[t])$ is a three-dimensional initial data set, where we used the same notation as in Definition~\ref{Ck_asymptotically_flat_foliation}. Furthermore, let $\M\hookrightarrow\outM$ be a smooth, closed hypersurface in one of the timeslices $\outM:=\outM[t_0]$.\smallskip

The (conventional) stability operator $\jacobiext*$ of $\M\hookrightarrow\outM$ is well-understood. It can be defined as the linearization of the mean curvature map
\begin{equation*} \textbf H : \Ck^2(\M) \to \Ck^0(\M) : f \mapsto \H(\graphnu\nu f) \labeleq{mean_curvature_map} \end{equation*}
at $f\equiv0$, where $\H(\graphnu\nu f)$ is the mean curvature of
\begin{equation*} \graphnu\nu f := \lbrace \outexp_p(f(p)\,\nu) \ \middle|\ p\in\M\rbrace \labeleq{graph_nu} \end{equation*}
with respect to the surrounding metric $\outg$. Here, $\outexp_p$ denotes the exponential map of $\outM$ at a point $p\in\M$. This graph is a well-defined closed hypersurface if $\M$ is smooth and $f$ lays in some well-defined $\Lp^2(\M)$-neighborhood of zero (depending on $\M$ and $\outM$). In particular, the (conventional) stability operator is well-defined for every smooth, closed hypersurface $\M\hookrightarrow\outM$ and it is well-known that it is characterized by
\begin{equation*} \jacobiext*f = \laplace f + (\outric*(\nu,\nu) + \trtr\zFund\zFund)\,f \qquad\forall\,f\in\Ck^2(\M). \labeleq{stability_operator} \end{equation*}

As we want to construct surfaces with constant expansion $\H\pm\troutzFund$ (and not with constant mean curvature), it is intuitive to replace the mean curvature map by the \lq expansion map\rq
\begin{equation*} \textbf H \pm\textbf P : \Ck^2(\M) \to \Ck^0(\M) : f \mapsto (\H \pm \troutzFund)(\graphnu? f) \labeleq{expansion_rate_map} \end{equation*}
as it was already done by Metzger \cite{metzger2007foliations}\footnote{Note that Metzger considered $\graphnu\nu f$, i.\,e.\ the graph in spatial direction.}, where $\pm$ denotes a fixed sign and $(\H \pm \troutzFund)(\graphnu? f)$ denotes the expansion of the graph of $f$ in \lq some direction\rq\ -- in the mean curvature case \eqref{mean_curvature_map}, this direction was the \emph{spatial direction} $\nu$. We recall that the expansion is the mean curvature of this graph within its future (or past) expanding light-cone and that it is given by the mean curvature $\H$ of this graph within (the corresponding) timeslice $\outM$ plus (or minus) the two-dimensional trace of the second fundamental form $\outzFund$ of (the corresponding) timeslice $\outM$ in $\uniM$. In this case, there are multiple \lq intuitive directions\rq\ in which the graph can be constructed. In this section, we characterize two of these (by linear combination this is sufficient for any direction):

First, we linearize this map \lq within $\outM$\rq, i.\,e.\ linearize the spatial expansion map
\begin{equation*} (\textbf H \pm\textbf P)^{\nu} : \Ck^2(\M) \to \Ck^0(\M) : f \mapsto (\H \pm \troutzFund)(\graphnu\nu f). \labeleq{radial_expansion_rate_map} \end{equation*}
where $\graphnu\nu f$ is as defined in \eqref{graph_nu}, i.\,e.\ the same graph as used in the above case of the (conventional) stability operator. To the best knowledge of the author, this was first done by Metzger \cite{metzger2007foliations}. We denote this \emph{pseudo stability operator} by $\jacobipm{\pm}$. It should be noted that $\jacobipm{\pm}$ does not arise as a second variation of the area operator $f\mapsto\volume{\graphnu\nu f}$ as the (conventional) stability operator does. The reason for this is that the first the variation is done in null-direction (resulting in $\textbf H \pm\textbf P$) and the second one in spatial-direction. This operator has been studied in more detail by Andersson-Mars-Simon, Andersson-Metzger, Andersson-Eichmair-Metzger, and others, see \cite{andersson2005local,andersson2009area,andersson2011jang} and the citations therein.

Second, we linearize the corresponding map in time direction, i.\,e.\ linearize the temporal expansion map
\begin{equation*} (\textbf H \pm\textbf P)^{\outnu} : \Ck^2(\outM) \to \Ck^0(\M) : \bar f \mapsto (\H \pm \troutzFund)(\graphnu\tv\bar f), \labeleq{temporal_expansion_rate_map} \end{equation*}
where $\graphnu\tv f := \lbrace \uniexp_p(f(p)\,\tv) \,:\,  p\in\M\rbrace\hookrightarrow\outM[\bar f]:=\lbrace\uniexp_{\bar p}(\bar f(\bar p)\,\tv)\,:\,\bar p\in\outM\}$ is the graph in \lq future direction\rq. Here, $\uniexp_p$ and $\uniexp_{\bar p}$ denote the exponential map of $\uniM$ at a point $p\in\M$ and at a point $\bar p\in\outM$, respectively. This means in particular that the mean curvature and second fundamental form $\outzFund$ of $(\textbf H\pm\textbf P)(f)$ at a point $p\in\M$ are calculated with respect to the metric and second fundamental form of $\outM[\bar f]$.\footnote{Note that $(\textbf H\pm\textbf P)^{\outnu}(\bar f)$ depends not only on the values of $\bar f$ on $\M$, i.\,e.\ on $f:=\bar f|_{\M}$, but also on its $\outM$-gradient $\outlevi*\bar f|_{\M}$ and its $\outM$-hessian $\outHess*\bar f|_{\M}$.}

The linearization of $(\textbf H\pm\textbf P)^{\outnu}$ is denoted by $\jacobit{\pm}$. For notation convenience, we only calculate $\jacobit{\pm}\ralpha$, where $\ralpha$ denotes the temporal lapse function of the temporal foliation $\{\outM[t]\}_t$, i.\,e.\ we only look at functions $\bar f$ with $\graphnu\tv\bar f=\outM[t]$ for some $t$.\pagebreak[2]

As first step, we calculate some identities for the Ricci curvature of $\uniM$ on a two-dim\-en\-sio\-nal deformation $\Phi:\interval{t_0-\eta}{t_0+\eta}\times\interval-\eta\eta\times\M\to\outM$ of a closed, two-dim\-en\-sio\-nal surface $\M=\Phi(t_0,0,\M)\hookrightarrow\outM[t_0]$. Again, we restrict ourselves to the case of deformations \emph{compatible with the temporal foliation $\{\outM[t]\}_t$}, i.\,e.\ $\Phi(t,\Hradius,p)\in\outM[t]$ for every $t\in\interval{t_0-\eta}{t_0+\eta}$, $\Hradius\in\interval-\eta\eta$, and $p\in\M$. Furthermore, we assume that it is orthogonal, i.\,e.\ $\partial*_t\Phi=:\ralpha[t]\,\tv[t]$ and $\partial*_\Hradius\Phi=:\rnu[t]<\Hradius>\,\nu[t]<\Hradius>$ for some smooth function $\rnu<\Hradius>[t]$ on $\M<\Hradius>[t]:=\Phi(t,\Hradius,\M)$ and the temporal lapse function $\ralpha[t]$ on $\outM[t]$, where $\tv[t]$ again denotes the future pointing unit normal of $\outM[t]\hookrightarrow\uniM$ and $\nu[t]<\Hradius>$ is the outer unit normal of $\M[t]<\Hradius>\hookrightarrow\outM[t]$. Finally, we denote the time derivative $\partial*_t(\pullback\Phi T)$ and the spatial derivative $\partial*_\Hradius(\pullback\Phi T)$ of any quantity $T$ by $\partialt T$ and $\partialr T$, respectively.\nopagebreak[3]
\begin{proposition}[Curvature identities]
Let $(\uniM,\unig)$ be a smooth Lorentzian manifold, $\lbrace\outM[t]\rbrace_t$ be a smooth temporal foliation with respect to a smooth time function $\time$ on $\uniM$. Additionally, let 
\[ \Phi:\interval-{\theta+t_0}{\theta+t_0}\times\interval-\theta\theta\times\M\to\uniM:(t,\sigma,p)\mapsto\Phi(t,\sigma,p) \]
be a smooth, orthogonal foliation-compatible deformation of a closed hypersurface $\M\hookrightarrow\outM[t_0]=:\outM$ \normalbrace{see above}. Suppressing the indices $t_0$ and $\sigma=0$, the tensor identities
{\normalfont\begin{align*}
 \uniric*(\partial*_t,\nu)
	={}& \partialt\H + \ralpha\,(\div\outzFundnu - \trtr\zFund\outzFund) - \partialr\ralpha\,\troutzFund
			+ 2 \outzFundnu(\levi*\ralpha) \labeleq{partialtH} \\\displaybreak[1]
 \uniric*(\partial*_t,\tv)
	={}& \partialt(\troutzFund+\outzFund_{\nu\nu})-\ralpha\,(\trtr\outzFund\outzFund+4\trtr\outzFundnu\outzFundnu
			+ \outzFund_{\nu\nu}^2) + \laplace\ralpha + (\outHess\,\ralpha)(\nu,\nu)  \labeleq{uniric00}\\\displaybreak[1]
 \uniric*(\tv,\partial*_\sigma)
	={}& \partialr(\troutzFund) - 2\outzFund_\nu(\levi\rnu) + \rnu(\H\,\outzFund_{\nu\nu} - \div\outzFundnu
			- \trtr\zFund\outzFund) \labeleq{uniric01} \\\displaybreak[1]\labeleq{uniric11}
 \uniric*(\nu,\nu)
	={}& \outric*(\nu,\nu) - 2\trtr\outzFundnu\outzFundnu + \outzFund_{\nu\nu}^2 + \troutzFund\,\outzFund_{\nu\nu}
			- \frac{\partialt(\outzFund_{\nu\nu})}\ralpha - (\outHess\,\ralpha)(\nu,\nu) 
\end{align*}}%
hold on $\M$, where {\normalfont$(\trzd\zFund\outzFund)_{\ii\!\ij}:=\zFund_{\ii\!\ik}\,\g^{\ik\!\il}\,\outzFund_{\ij\!\il}$}, {\normalfont$\outzFundnu:=\outzFund(\nu,\cdot)$}, and {\normalfont$\outzFund_{\nu\nu}:=\outzFund(\nu,\nu)$}.\pagebreak[2]
\end{proposition}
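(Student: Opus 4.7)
All four identities are (at a point $p \in \M$) specific components of the ambient Ricci tensor $\uniric$ expressed through the geometry of the triple embedding $\M \hookrightarrow \outM \hookrightarrow \uniM$, so the plan is to derive each by combining the Gauss and Codazzi--Mainardi equations for the two successive embeddings with the ADM-type evolution $\partial_t\outg[t] = -2\,\ralpha\,\outzFund[t]$ built into the definition of $\outzFund$. As a preliminary step I fix at $p$ an orthonormal frame $(e_2, e_3)$ of $T_p\M$, completed by $\nu$ to an orthonormal frame of $T_p\outM$ and finally by $\tv$ to a pseudo-orthonormal frame of $T_p\uniM$ (with $\unig(\tv,\tv) = -1$). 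Each $\uniric$-component is then written as a trace of $\uniriem$ over this frame and all ambient pieces are systematically converted into the intrinsic two-dimensional language of $\zFund$, $\outzFundnu$, $\outzFund_{\nu\nu}$, $\troutzFund$, and $\trtr\outzFund\outzFund$.

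For identity~\eqref{uniric11} I apply the Gauss equation of the slice $\outM \subset \uniM$ in the direction $(\nu,\nu)$ to express $\outric(\nu,\nu)$ in terms of $\uniric(\nu,\nu)$, $\uniriem(\tv,\nu,\tv,\nu)$, and quadratic $\outzFund$-corrections, then use the Lorentzian Riccati equation of $\outM\subset\uniM$ to replace $\uniriem(\tv,\nu,\tv,\nu)$ by $\partialt(\outzFund_{\nu\nu})/\ralpha$, $(\outHess\ralpha)(\nu,\nu)/\ralpha$, and further quadratic $\outzFund$-corrections. Decomposing $\outH = \troutzFund + \outzFund_{\nu\nu}$ and $\outzFund_{\nu k}\outzFund^{k}{}_{\nu} = \outzFund_{\nu\nu}^{\,2} + \trtr\outzFundnu\outzFundnu$ and solving for $\uniric(\nu,\nu)$ produces the stated formula. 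For identity~\eqref{uniric01} I invoke the Codazzi--Mainardi equation $\uniric(\tv, X) = \outlevi_X(\outtr\outzFund) - (\outdiv\outzFund)(X)$ for tangent $X \in T\outM$, applied to $X = \partial_\sigma = \rnu\,\nu$. After splitting $(\outdiv\outzFund)(\nu)$ into its $2$-surface divergence $\div\outzFundnu$, its $(\outlevi_\nu\outzFund)(\nu,\nu)$ piece, and the 2-SFF corrections coming from $\zFund$, and converting the $\rnu\outlevi_\nu$-derivatives into $\partialr$-derivatives through $\partialr\nu = -\levi\rnu$, the right-hand side of \eqref{uniric01} emerges directly.

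The two temporal identities~\eqref{partialtH} and~\eqref{uniric00} require simultaneous use of the slice evolution. For~\eqref{partialtH} I compute $\partialt\H$ by tracking at once the evolution of the ambient 3-metric $\partial_t\outg[t] = -2\ralpha\outzFund[t]$ and the motion of $\M[t] \subset \outM[t]$ along the slice-normal direction $\partial_t\Phi = \ralpha\,\tv$; the standard formula for the first variation of the 2-mean curvature under an ambient-metric perturbation produces a term proportional to $\uniric(\tv,\nu)$ which, together with the tangential Codazzi contribution, assembles to~\eqref{partialtH} after multiplication by $\ralpha$ and the identification $\uniric(\partial_t,\nu) = \ralpha\,\uniric(\tv,\nu)$. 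For~\eqref{uniric00} I begin with the 3+1 Raychaudhuri identity
\[
 \ralpha\,\uniric(\tv,\tv) \;=\; \partialt\outH + \outlaplace\ralpha - \ralpha\bigl(\trtr\outzFund\outzFund + 2\,\trtr\outzFundnu\outzFundnu + \outzFund_{\nu\nu}^{\,2}\bigr),
\]
which itself follows from tracing the Riccati equation of $\outM \subset \uniM$ and using the Hamiltonian constraint in~\eqref{constraint_equations}; then I decompose $\outH = \troutzFund + \outzFund_{\nu\nu}$ and invoke the 3-to-2 Laplacian conversion $\outlaplace\ralpha = \laplace\ralpha + (\outHess\ralpha)(\nu,\nu) + \H\,\partial_\nu\ralpha$. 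The residual $\H\,\partial_\nu\ralpha$ term is traded against $2\,\ralpha\,\trtr\outzFundnu\outzFundnu$ through one further use of the slice Codazzi relation of identity~\eqref{uniric01} (evaluated at $\rnu = 1$); this is precisely what upgrades the coefficient of $\trtr\outzFundnu\outzFundnu$ from the naive $2$ to the stated $4$.

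The main obstacle is careful bookkeeping of signs and of three-dimensional versus two-dimensional operators: because the paper distinguishes 3- and 2-dimensional quantities by a bar/no-bar convention and the Lorentzian signature produces additional signs in the Riccati and Gauss equations relative to the Riemannian case, every substitution -- the sign of the Riccati equation, the decomposition of $\outdiv\outzFund$ and $\outlaplace\ralpha$ between slice and 2-surface, and especially the re-substitution of the Codazzi identity that yields the coefficient $4$ in~\eqref{uniric00} -- must be verified with care. I expect this last coefficient to be the most delicate point: it cannot be read off from dimensional considerations and only emerges from the precise manner in which the Raychaudhuri and Codazzi identities recombine.
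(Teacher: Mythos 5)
Your route is essentially the paper's: you obtain \eqref{uniric01} from the Codazzi equation of $\outM\hookrightarrow\uniM$ applied to $\partial*_\sigma=\rnu\,\nu$, \eqref{uniric11} from the standard Gauss--Riccati (``radial'') identity for $\outM\hookrightarrow\uniM$ in the direction $(\nu,\nu)$, and \eqref{uniric00} from the Lorentzian stability-operator (traced Riccati) identity $\uniric*(\tv,\partial*_t)=\partialt\outH+\outlaplace\ralpha-\ralpha\,\outtrtr\outzFund\outzFund$, in each case followed by splitting the three-dimensional quantities into tangential and $\nu$-parts. The only structural difference is \eqref{partialtH}, which the paper does not reprove but quotes from the author's earlier work on the CMC time evolution, whereas you sketch the first-variation computation directly; that is acceptable in outline.

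The genuine gap is in your derivation of \eqref{uniric00}. After the splittings $\outtrtr\outzFund\outzFund=\trtr\outzFund\outzFund+2\,\trtr\outzFundnu\outzFundnu+\outzFund_{\nu\nu}^2$ and of $\outlaplace\ralpha$ into $\laplace\ralpha+(\outHess\,\ralpha)(\nu,\nu)$ plus a term $\H\,D_{\nu}\ralpha$, you are left with the cross term carrying the coefficient $2$ and with the residual $\H\,D_{\nu}\ralpha$, and you propose to convert the latter into $2\,\ralpha\,\trtr\outzFundnu\outzFundnu$ ``by one further use of \eqref{uniric01} at $\rnu=1$'' so as to reach the stated coefficient $4$. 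This step is not valid: \eqref{uniric01} is a Codazzi identity relating $\uniric*(\tv,\partial*_\sigma)$ to $\partialr(\troutzFund)$, $\div\outzFundnu$, $\H\,\outzFund_{\nu\nu}$ and $\trtr\zFund\outzFund$; it contains no occurrence of the time-lapse $\ralpha$ at all, so no use of it can absorb or produce a term involving $D_{\nu}\ralpha$ (indeed $\H\,D_{\nu}\ralpha$ vanishes identically for $\ralpha\equiv1$ while $\trtr\outzFundnu\outzFundnu$ in general does not), and substituting it would moreover inject $\uniric*(\tv,\nu)$ and divergence terms which do not occur in \eqref{uniric00} and have nothing to cancel against. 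Note that the paper's own proof performs exactly the naive decomposition, arrives at the coefficient $2$ (and, as written, without any $\H\,D_{\nu}\ralpha$ term), and then identifies the result with \eqref{uniric00}; the displayed ``$4$'' is therefore a discrepancy internal to the paper, not something recoverable by your manoeuvre --- the correct response is to state the identity with the coefficient your computation actually yields, or to flag the mismatch, rather than to invent a cancellation. A smaller point of the same nature: in \eqref{uniric11} the Riccati equation produces $(\partialt\outzFund)_{\nu\nu}$, not $\partialt(\outzFund_{\nu\nu})$; the difference consists of $\outzFund$ contracted with the time derivative of the unit normal and contributes further quadratic terms which your sketch does not track (the paper absorbs this in its final ``this is equivalent to \eqref{uniric11}'' step).
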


\begin{proof}
The first identity \eqref{partialtH} was proven by the author in \cite[Prop.~3.7]{nerz2013timeevolutionofCMC}. By the well-known identity for the (conventional) stability operator in the Lorentzian case, we know
\begin{align*}
 \uniric*(\tv,\partial*_t) ={}& \partialt\outH + \outlaplace\ralpha - \ralpha\outtrtr\outzFund\outzFund \\
	={}& \partialt(\troutzFund+\outzFund_{\nu\nu}) + \laplace\ralpha + (\outHess\,\ralpha)(\nu,\nu)
			- \ralpha(\trtr\outzFund\outzFund + 2 \g^{\ii\!\ij}\,\outzFund_{\nu\ii}\,\outzFund_{\nu\ij} + \outzFund_{\nu\nu}^2).
\end{align*}
Thus, the identity \eqref{uniric00} is proven. Furthermore, the Codazzi equations
\[ \uniric*(\tv,\partial*_\Hradius) = (D\outH - \outdiv\outzFund)(\partial*_\Hradius) = (D\outH - \outdiv\outzFund)(\rnu\,\nu) \] 
implies
\begin{align*}
 \uniric*(\tv,\partial*_\Hradius)
	={}& \partialr(\troutzFund + \outzFund_{\nu\nu}) - \rnu^{-2}(\partialr(\rnu^2\outzFund_{\nu\nu}) - 2 \outzFund(\partialr\rnu\,\nu - \rnu\g^{\ii\!\ij}\,D_\ii\rnu\,e_\ij,\partial*_\Hradius)) \\
		& - \g^{\ii\!\ij}(\partial_\ii@{(\rnu\,\outzFund_{\nu\ij})} - \outzFund(D_p\rnu\,\nu - \rnu\,\g^{\ik\!\il}\,\zFund_{\ii\!\ik}\,e_\il,e_\ij)) \\
		& + \g^{\ii\!\ij}\,\outzFund(\partial*_\Hradius,\zFund_{\ii\!\ij}\hspace{.05em}\nu + \levi*_{\!e_\ii}e_\ij) \\
	={}& \partialr(\troutzFund) - 2 \g^{\ii\!\ij}D_\ii\rnu\;\outzFund_{\nu\hspace{-.05em}\ij} - \rnu\,\div\outzFundnu - \g^{\ii\!\ij}(\rnu\g^{\ik\!\il}\,\zFund_{\ii\!\ik}\,\outzFund_{\ij\!\il} - \rnu\zFund_{\ii\!\ij}\,\outzFund_{\nu\nu}).
\end{align*}
This proves \eqref{uniric01}. It is well-known that for any hypersurface $\outM\hookrightarrow\uniM$ the identity
\[ \ralpha\,\uniric*(\nu,\nu) = \ralpha\,(\outric*(\nu,\nu) -\,2\outtrtr\outzFundnu\outzFundnu + \outH\,\outzFund_{\nu\nu}) - (\outHess\,\ralpha)(\nu,\nu) - (\partialt\outzFund)_{\nu\nu} \]
holds and this is equivalent to \eqref{uniric11}.\pagebreak[3]
\end{proof}

As a direct corollary, we get the desired characterizations of the pseudo stability operators (see below). As explained, we are only interested how the expansion change within the given temporal foliation -- in particular, we only calculate the linearization of the temporal expansion map for the lapse function $\ralpha$ of the temporal foliation. Furthermore, we replace the sign $\pm$ in the expansion map \eqref{expansion_rate_map} for technical reasons by a factor $\gewicht\in\interval*-1*1$ -- this means, we are concerning the expansion not only in null-direction, but also in the spacelike direction $\nu+\gewicht\,\tv$ ($\gewicht\in\interval-11$). The reason for this is an open-closed argument in Section~\ref{existence}.\pagebreak[1]
\begin{corollary}[Pseudo stability operators]\label{Stability_operators}
Let $\M\hookrightarrow\outM[t_0]$ be a closed hypersurface and $\gewicht\in\interval*-1*1$ be a constant. The spatial \normalbrace{weighted} expansion pseudo stability operator $\jacobipm[t_0]$ is defined as linearization of the spatial \normalbrace{weighted}** expansion map
\[ \normalfont(\textbf H + \gewicht\,\textbf P)^{\nu} : \Ck^2(\M) \to \Ck^0(\M) : f \mapsto (\H + \gewicht\,\troutzFund)(\graphnu\nu f), \]
in $f\equiv0$ and the \normalbrace{signed}** temporal pseudo stability operator $\jacobit[t_0]$ defined as linearization of the temporal expansion map \eqref{temporal_expansion_rate_map} in $f\equiv0$, respectively. Suppressing the index $t_0$, these are characterized by
{\normalfont\begin{align*} \labeleq{jacobipm}
 \jacobipm*f
	={}& \jacobiext*f + 2\gewicht\,\outzFundnu(\levi*f) + \gewicht\,(\div\outzFundnu +{}\trtr\zFund\outzFund + \outmomden(\nu) - \H\,\outzFund_{\nu\nu})f,\\\labeleq{jacobit}
 \jacobit*\ralpha
	={}& \mp \laplace\ralpha + \rnu\,D_{\nu}\ralpha\;\troutzFund - 2 \outzFundnu(\levi*\ralpha) + (\outmomden(\nu)-\div\outzFundnu + \trtr\zFund\outzFund)\,\ralpha \\
			&	\pm (\outenden + \einstein*(\nu,\nu) + \trtr\outzFund\outzFund +\, 6\trtr\outzFundnu\outzFundnu -\,\troutzFund\;\outzFund_{\nu\nu} - \outric*(\nu,\nu))\,\ralpha
\end{align*}}%
for the temporal lapse function $\ralpha:=\vert\unig*(\unilevi*\time,\unilevi*\time)\vert^{\nicefrac{{-}1}2}$ of $\lbrace\outM[t]\rbrace_t$ at $t=t_0$ and any smooth function $f\in\Ck^2(\M)$, where $\einstein*:=\uniric*-\frac12\;\unisc\;\unig*$, $\outmomden:=\einstein*(\tv,\cdot)$, and $\outenden:=\einstein*(\tv,\tv)$ denote the Einstein tensor of $\uniM$, the momentum density of $\outM$, and the energy-density of $\outM$, respectively.
\end{corollary}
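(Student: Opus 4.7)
The plan is to read off both formulas from the tensor identities \eqref{partialtH}--\eqref{uniric11}, each paired with an appropriately chosen deformation.

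For the spatial operator $\jacobipm*$: the conventional Jacobi operator \eqref{stability_operator} already provides the linearization of $f\mapsto\H(\graphnu\nu f)$, so only the linearization of $f\mapsto\troutzFund(\graphnu\nu f)$ remains. I would choose a purely spatial one-parameter deformation with $\partial*_\sigma\Phi=f\,\nu$ at $\sigma=0$ (so $\rnu=f$ and zero shift) and evaluate \eqref{uniric01} to read off $\partialr\troutzFund$. The curvature contribution reduces to $\uniric*(\tv,\partial*_\sigma)=f\,\einstein*(\tv,\nu)=f\,\outmomden(\nu)$ since $\tv\perp\nu$. Multiplying the resulting expression by $\gewicht$ and adding \eqref{stability_operator} reproduces \eqref{jacobipm}.

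For the temporal operator applied to $\ralpha$: the choice $\bar f=\epsilon\,\ralpha$ makes $\graphnu\tv(\epsilon\,\ralpha)$ the image of $\M$ under the temporal flow generated by $\partial*_t=\ralpha\,\tv$ to first order, so $\jacobit*\ralpha=\partialt\H\pm\partialt\troutzFund$ for that flow. Identity \eqref{partialtH} together with $\uniric*(\partial*_t,\nu)=\ralpha\,\outmomden(\nu)$ produces $\partialt\H$ directly. To obtain $\partialt\troutzFund$ I exploit $\outH=\troutzFund+\outzFund_{\nu\nu}$: \eqref{uniric00} supplies $\partialt(\troutzFund+\outzFund_{\nu\nu})$, while \eqref{uniric11}, solved for $\partialt\outzFund_{\nu\nu}$, supplies the correction. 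Subtracting the two and using the scalar-curvature cancellation
\[
\uniric*(\tv,\tv)+\uniric*(\nu,\nu) = \einstein*(\tv,\tv)+\einstein*(\nu,\nu) = \outenden+\einstein*(\nu,\nu),
\]
which holds because $\unig*(\tv,\tv)=-1$ and $\unig*(\nu,\nu)=+1$ make the $\unisc$-contributions cancel, produces \eqref{jacobit} after collecting terms.

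The main technical obstacle is simply bookkeeping. Combining \eqref{uniric00} and \eqref{uniric11} produces roughly ten algebraic terms in $\outzFund$, and one must track the cancellation of the two $\outzFund_{\nu\nu}^2$ contributions, the collapse $4\,\trtr\outzFundnu\outzFundnu+2\,\trtr\outzFundnu\outzFundnu=6\,\trtr\outzFundnu\outzFundnu$, and the careful splitting between the residual $\outric*(\nu,\nu)$ inherited from \eqref{uniric11} and the $\einstein*(\nu,\nu)$ arising from the Einstein-tensor conversion. Once this accounting is carried out, the stated formulas follow without further work.
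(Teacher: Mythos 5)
Your proposal is correct and is essentially the paper's own route: the corollary is obtained directly from the curvature identities \eqref{partialtH}--\eqref{uniric11}, reading off the spatial operator from the conventional stability operator plus \eqref{uniric01} with $\uniric*(\tv,\nu)=\outmomden(\nu)$, and the temporal operator from \eqref{partialtH} together with the difference of \eqref{uniric00} and \eqref{uniric11}, using $\uniric*(\tv,\tv)+\uniric*(\nu,\nu)=\outenden+\einstein*(\nu,\nu)$. Your bookkeeping (cancellation of the $\outzFund_{\nu\nu}^2$ terms, $4+2=6$ for the $\trtr\outzFundnu\outzFundnu$ terms, cancellation of the Hessian terms) reproduces \eqref{jacobipm} and \eqref{jacobit} exactly as intended.
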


In particular, we note that the spatial (weighted) stability operators depends only on quantities of the initial data set $\outM[t_0]$ (as to be suspected) while the temporal (signed) stability operator depends on the quantity $\einstein*(\nu,\nu)$ and the lapse function $\ralpha$, i.\,e.\ on quantities of the Lorentzian manifold $\uniM$ and the temporal foliation (as to be suspected), respectively.\pagebreak[3]

\section{Existence of the CE-foliation and uniqueness of CE-spheres}\label{existence}
In this section, we prove existince of a (unique) smooth sphere $\M[\pm]<\Hradius>$ with constant expansion (CE) $\H\pm\troutzFund\equiv\nicefrac{{-}2}\Hradius$ for every three-dimensional $\Ck^2_{\frac12+\outve}$-as\-ymp\-to\-ti\-cal\-ly flat initial data set $(\outM,\outg*,\outx,\outzFund,\outmomden*,\outenden,\ralpha)$ with sufficiently fast vanishing second fundamental form if some additional integral assumptions on $\outzFund$ are satisfied. Here, $\pm$ denotes a (fixed) sign and we assume that $\Hradius$ is large enough (depending on the decay constants of the initial data set). Furthermore, we prove that these CE-spheres foliate $\outM$ outside some compact set $\outsymbol K$. More precisely, we prove the following existence and uniqueness theorems.
\begin{theorem}[Existence of the CE-foliation]\label{Existence}
Let $\outve>0$, $R_0>0$, $\outck\ge0$ be constants and $(\outM,\outg*,\outx,\outzFund,\outmomden*,\outenden)$ be a $\Ck^2_{\frac12+\outve}$-asymptotically flat initial data set with $\Ck^1_2$-asymptotically vanishing second fundamental form $\outzFund$ and non-vanishing mass $\mass\neq0$. There exists a positive constant $\outck=\Cof{\outck}[\mass][\outve][\outc]>0$ with the following property: if
\begin{align*}
 \vert\int_{\sphere^2_\rradius(0)}\outzFund_{kl}\,\frac{\outx^k}\rradius(\frac{\outx^j\,\eukoutg^{il}-\outx^i\,\eukoutg^{jl}}{\rradius}) \d\mug\vert \le{}& \outck, &
 \vert\int_{\sphere^2_\rradius(0)}\troutzFund \d\mug\vert \le{}& \outck, \labeleq{assumptions_Theorem_1} \\
 \vert\int_{\sphere^2_\rradius(0)}\outH\,\frac{\outx^i}{\rradius}\,\frac{\outx^j}{\rradius} \d\mug\vert \le \outck, \quad
 \vert\int_{\sphere^2_\rradius(0)}\outH\,\frac{\outx^i}{\rradius}\d\mug\vert \le{}& \outck, &
 \vert\int_{\sphere^2_\rradius(0)}\troutzFund\,\frac{\outx^i}{\rradius}\d\mug\vert \le{}& \outck,
\labeleq{assumptions_Theorem_2}\end{align*}
hold for every $i,j,k\in\{1,2,3\}$ and $\rradius>R_0$ for some $R_0>0$, then there exist a constant $\Hradius_0={\Cof{\Hradius_0}[\mass][\outve][\outc][R_0]}$ and two $\Ck^1$-maps $\outPhi[\pm]:\interval{\Hradius_0}\infty\times\sphere^2\to\outM$ such that $\M<\Hradius>[\pm]:=\outPhi[\pm](\Hradius,\sphere^2)$ has constant expansion $\H<\Hradius>[\pm]\pm\tr<\Hradius>[\pm]\outzFund*\equiv\nicefrac{-2}\Hradius$ for any $\Hradius>\Hradius_0$. Furthermore, these CE-surfaces foliate $\outM$ near infinity, i.\,e.\ the maps $\tensor\Phi[\pm]$ are diffeomorphisms onto their images and $\outM\setminus\outPhi[\pm](\interval{\Hradius_0}\infty\times\sphere^2)$ are both compact.
\end{theorem}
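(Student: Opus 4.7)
The plan is to adapt Metzger's open-closed argument with the three modifications announced in the introduction. Fix the given metric $\outg*$ throughout, and for $\tau\in\interval01$ consider the one-parameter family of initial data sets $(\outM,\outg*,\outx,\tau\,\outzFund,\tau\,\outmomden*,\outenden[\tau])$, where $\outenden[\tau]$ is the artificial energy-density making the constraint equations \eqref{constraint_equations} hold (as in the introduction). At $\tau=0$ the expansion equation $\H\pm\tau\,\troutzFund\equiv\nicefrac{-2}\Hradius$ degenerates to the CMC equation $\H\equiv\nicefrac{-2}\Hradius$, so a CE-foliation at $\tau=0$ is exactly a CMC-foliation of $(\outM,\outg*)$. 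Such a foliation exists by Theorem~3.1 of \cite{nerz2014CMCfoliation} (this is the essential substitute for Metzger's use of the Schwarzschild background). Let $\intervalI\subseteq\interval01$ denote the set of $\tau_0$ such that a CE-foliation $\{\M[\tau_0]<\Hradius>\}_{\Hradius>\Hradius_0}$ exists for this intermediate initial data set with uniform control in the class $\mathcal A^{\outve,\eta}_\Hradius(\ccenterz,\c_1)$; then $0\in\intervalI$, and the goal is to show $\intervalI=\interval01$ and in particular $1\in\intervalI$.

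For openness, fix $\tau_0\in\intervalI$, $\Hradius\gg\Hradius_0$, and $p>2$, and consider
\[
 F:\interval01\times\Wkp^{2,p}(\M[\tau_0]<\Hradius>)\to\Lp^p(\M[\tau_0]<\Hradius>):(\tau,f)\mapsto(\H[\tau]\pm\tau\,\tr[\tau]\outzFund)(\graphnu\nu f)+\frac 2\Hradius.
\]
The Fr\'echet derivative $\partial_f F(\tau_0,0)$ is the spatial pseudo stability operator $\jacobipm{\pm}$ of $\M[\tau_0]<\Hradius>$ (with weight $\gewicht=\tau_0$) from Corollary~\ref{Stability_operators}. Proposition~\ref{stability_operator_invertible} must supply its invertibility, and this is where the Bochner--Lichnerowicz formula and the integral smallness assumptions \eqref{assumptions_Theorem_1}--\eqref{assumptions_Theorem_2} replace Metzger's pointwise smallness of $\outzFund$: expanding the quadratic form of $\jacobipm{\pm}$ using Bochner--Lichnerowicz gives a positive principal term controlled by the Hawking mass (hence by $\mass\neq0$ and $\eqref{Definition_of_mass_dash}$), while the error terms from $\outzFund$ reduce modulo standard regularity estimates to the integrals in \eqref{assumptions_Theorem_1}--\eqref{assumptions_Theorem_2}, which are small by hypothesis. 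Given invertibility, the implicit function theorem produces a smooth family $\tau\mapsto f_\tau\in\Wkp^{2,p}(\M[\tau_0]<\Hradius>)$ with $F(\tau,f_\tau)\equiv 0$ for $|\tau-\tau_0|$ small, so that $\M[\tau]<\Hradius>:=\graphnu\nu{f_\tau}$ is the desired CE-surface for the $\tau$-data, and a standard patching argument in $\Hradius$ yields the foliation.

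For closedness I take $\tau_n\to\tau_\infty$ in $\intervalI$ and extract a limit foliation. Uniform $\Wkp^{2,p}$-bounds on $\M[\tau_n]<\Hradius>$ (for each $\Hradius$, uniformly in $n$ and in $\Hradius$) come in two steps: first, the pseudo stability operator estimates of the previous paragraph, combined with the Codazzi identity and Proposition~\ref{Regularity_of_surfaces_in_asymptotically_flat_spaces} (DeLellis--M\"uller), control $\zFundtrf[\tau_n]<\Hradius>$ in $\Lp^2$ in terms of the CMC-defect and the scaling of $\outzFund$; second, one must control the coordinate center $\centerz(\M[\tau_n]<\Hradius>)$ uniformly so that $\M[\tau_n]<\Hradius>\in\mathcal A^{\outve,\eta}_\Hradius(\ccenterz,\c_1)$ for $n,\Hradius$-uniform constants. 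Once these bounds are in place, Arzel\`a--Ascoli yields a limit CE-surface $\M[\tau_\infty]<\Hradius>$, and the invertibility of $\jacobipm{\pm}$ at the limit (openness at $\tau_\infty$) promotes this to a full CE-foliation at $\tau_\infty$.

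The main obstacle is the uniform center estimate, which replaces Metzger's use of the explicit Schwarzschild structure. The key new idea is to differentiate the CE-relation $F(\tau,f_\tau)\equiv 0$ in $\tau$: this produces, via \eqref{jacobipm}, an elliptic equation for the lapse function $\rnu[\tau]<\Hradius>$ of the one-parameter family $\tau\mapsto\M[\tau]<\Hradius>$, of the schematic form $\jacobipm{\pm}\rnu[\tau]<\Hradius>=\pm\,\tr[\tau]\outzFund$ plus lower-order terms in $\outzFund$. Integrating $\rnu[\tau]<\Hradius>$ against suitable test functions (the coordinate functions $\outx^i$ and the constant function) recovers the drift of $\centerz$ and of the area radius, and the integral hypotheses \eqref{assumptions_Theorem_1}--\eqref{assumptions_Theorem_2} give exactly the cancellations needed so that this drift remains bounded; this is the content of the Lemmas~\ref{decay_rnu_full} and \ref{Derivative_estimates} flagged in the proof outline. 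With the center estimate secured, the rest of the argument, including the comparison argument showing that the resulting family of CE-spheres is monotone in $\Hradius$ and therefore a genuine foliation of a neighbourhood of infinity, proceeds as in \cite{metzger2007foliations,nerz2014CMCfoliation}.
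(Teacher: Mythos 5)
Your proposal follows essentially the same route as the paper: fix the metric $\outg*$, run Metzger's open--closed argument in the interpolation parameter starting from the CMC-foliation of \cite{nerz2014CMCfoliation}, get openness from the implicit function theorem once the pseudo stability operator is inverted via the Bochner--Lichnerowicz formula together with the integral hypotheses \eqref{assumptions_Theorem_1}--\eqref{assumptions_Theorem_2} (Proposition~\ref{stability_operator_invertible}), and control the coordinate center by differentiating the constant-expansion relation in the parameter, i.\,e.\ by the lapse equation $\jacobipm\rnu={-}\troutzFund$ and its consequences (Lemmas~\ref{decay_rnu_full} and \ref{Derivative_estimates}), with closedness from the resulting uniform Lipschitz bounds. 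The only step you treat more lightly than the paper is the foliation property itself, which here is not a mere patching or monotonicity argument borrowed from the CMC case but requires strict positivity of the radial lapse, and that in turn uses the smallness of the ADM-linear momentum through Proposition~\ref{linear_Momentum_small} and the last inequality in \eqref{assumptions_Theorem_2}.
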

We see that the integrals in \eqref{assumptions_Theorem_1} and the first one in \eqref{assumptions_Theorem_2} vanish asymptotically if $\outzFund$ is asymptotically anti-symmetric, i.\,e.\ $\vert\hspace{.05em}\outzFund(\outx)+\outzFund({-}\outx)\vert\le\nicefrac\oc{\rad^{2+\outve}}$ implies that the integral inequalities in \eqref{assumptions_Theorem_1} and the first one in \eqref{assumptions_Theorem_2} are satisfied for every $\outck>0$ (if $R_0=\Cof{R_0}[\outck][\ve]$ is sufficiently large). In particular, these integrals vanish asymptotically if the \emph{Regge-Teitelboim conditions} are satisfied, for more information about these conditions see for example \cite{regge1974role,huang2009center}. Equally, the second inequality in \eqref{assumptions_Theorem_2} vanishes asymptotically if the initial data set is asymptotically maximal, i.\,e.\ $\vert\outH\vert\le\nicefrac\oc{\rad^{2+\outve}}$. We prove in Proposition~\ref{linear_Momentum_small} that the last integral in \eqref{assumptions_Theorem_2} asymptotically corresponds to the linear momentum.

\begin{remark}[Alternative assumptions]\label{Alternative_assumptions}
We can alter the assumptions in Theorem~\ref{Existence} on the second fundamental form $\outzFund*$: if \eqref{assumptions_Theorem_1} and \eqref{assumptions_Theorem_2} are satisfied for $\nicefrac\outck{\Hradius^\delta}$ instead of $\outck$, where $\delta\in\interval0*\ve$, then we can replace the assumption \lq $\Ck^1_2$-asymp\-to\-ti\-cally vanishing second fundamental form $\outzFund*$\rq, i.\,e.\ $\vert\hspace{.05em}\outzFund\vert_{\outg*}+\rad\,\vert\outlevi*\outzFund\vert_{\outg*}\le\nicefrac\oc{\rad^2}$, by \lq$\vert\hspace{.05em}\outzFund\vert_{\outg*}+\rad\,\vert\outlevi*\outzFund\vert_{\outg*}\le\nicefrac\oc{\rad^{2-\delta}}$\rq. However, as this does not need any additional argument, we use the assumptions explained in Theorem~\ref{Existence}.\pagebreak[3]\smallskip
\end{remark}

The corresponding result is also true for a temporal foliation instead of a single timeslice:\nopagebreak
\begin{theorem}[Regularity of the CE-surfaces in time]\label{Regularity_over_time}
Let $(\outM[t],\outg[t]*,\outx[t],\outzFund[t],\outenden[t],\outmomden[t],\ralpha[t])_{t\in I}$ be a uniformly $\Ck^2_{\frac12+\outve}$-asymptotically flat temporal foliation \normalbrace{for some $\outve>0$} such that $(\outM[t],\outg[t]*,\outtensor[t] x,\outzFund[t],\outenden[t],\outmomden[t],\ralpha[t])$ satisfies for any time $t$ the assumptions of Theorem~\ref{Existence} including \eqref{assumptions_Theorem_1} and \eqref{assumptions_Theorem_2}. There are two $\Ck^1$-maps $\uniPhi[\pm]:I\times\interval{\Hradius_0}\infty\times\sphere^2\to\uniM$ such that $\outPhi[t,\pm]:=\uniPhi[\pm](t,\cdot,\cdot)$ are the maps $\outPhi[\pm]$ from Theorem~\ref{Existence} for $(\outM[t],\outg[t]*,\outtensor[t] x,\outzFund[t],\outenden[t],\outmomden[t])$ and every $t\in I$.\pagebreak[3]
\end{theorem}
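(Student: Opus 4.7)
The plan is to apply the parametric implicit function theorem to upgrade the pointwise (in time) existence of Theorem~\ref{Existence} to a $\Ck^1$-dependence in the time parameter $t$. Fix $t_0\in I$ and $\Hradius>\Hradius_0$, and let $\M<\Hradius>[t_0,\pm]=\outPhi[t_0,\pm](\Hradius,\sphere^2)\hookrightarrow\outM[t_0]$ be the CE-surface produced by Theorem~\ref{Existence}. To compare surfaces living in different timeslices, I use the chart $(\time,\unisymbol x)$: each $t\in I$ near $t_0$ induces an identification of a neighborhood of $\M<\Hradius>[t_0,\pm]$ in $\outM[t_0]$ with a neighborhood of the corresponding set in $\outM[t]$ by matching $\unisymbol x$-coordinates. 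Via this identification I transport a normal graph function $f$ on $\M<\Hradius>[t_0,\pm]$ into $\outM[t]$, obtaining a surface $\graphnu{\nu[t]}f\hookrightarrow\outM[t]$ whose expansion is computed with respect to $\outg[t]*$ and $\outzFund[t]*$.

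Next, for some fixed $p>2$, I define
\[ F:I\times\Wkp^{2,p}(\M<\Hradius>[t_0,\pm])\to\Lp^p(\M<\Hradius>[t_0,\pm]),\qquad F(t,f):=(\H[t]\pm\tr[t]\outzFund[t])\bigl(\graphnu{\nu[t]}f\bigr)+\tfrac{2}{\Hradius}. \]
The uniform $\Ck^2_{\frac12+\outve}$-asymptotic flatness implies that $(\outg[t],\outzFund[t])$ depends $\Ck^1$ on $t$ in the relevant weighted norms, so $F$ is $\Ck^1$ jointly in $(t,f)$. Moreover $F(t_0,0)=0$ by definition of $\outPhi[t_0,\pm]$, and the partial Fr\'echet derivative $D_fF(t_0,0)$ is, up to a bounded perturbation coming from the coordinate identification, the spatial pseudo stability operator $\jacobipm{\pm}[t_0]$ from Corollary~\ref{Stability_operators}. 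By Proposition~\ref{stability_operator_invertible}, this operator is invertible on the Sobolev spaces in question.

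With these ingredients the implicit function theorem supplies a $\Ck^1$-family $t\mapsto f_t$ defined on a neighborhood of $t_0$, with $F(t,f_t)=0$, so that the corresponding surface $\tensor\Phi[t,\pm](\Hradius,\cdot)$ in $\outM[t]$ has expansion $\nicefrac{{-}2}\Hradius$. By the uniqueness result of Theorem~\ref{Uniqueness} (whose hypotheses are met because the family $\tensor\Phi[t,\pm](\Hradius,\cdot)$ deforms continuously from $\outPhi[t_0,\pm](\Hradius,\cdot)$ while preserving the \textit{a priori} estimates used in the construction), this surface must coincide with the surface $\outPhi[t,\pm](\Hradius,\sphere^2)$ produced by Theorem~\ref{Existence} applied at time $t$. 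Letting $t_0$ vary over $I$ and $\Hradius$ vary over $(\Hradius_0,\infty)$ and patching the local $\Ck^1$-families via this uniqueness, I obtain the desired global $\Ck^1$-map $\uniPhi[\pm]:I\times\interval{\Hradius_0}\infty\times\sphere^2\to\uniM$.

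The main obstacle is ensuring that the implicit function theorem can be applied \emph{uniformly}: I need the inverse of $\jacobipm{\pm}[t]$ to be bounded uniformly in $t$ (in the appropriate Sobolev operator norm) and the map $t\mapsto F(t,\cdot)$ to be $\Ck^1$ with uniform estimates. Both points ultimately rest on the \emph{uniform} $\Ck^2_{\frac12+\outve}$-asymptotic flatness assumption combined with Proposition~\ref{stability_operator_invertible}; the delicate aspect is that the $t$-derivatives of the geometric quantities involve the temporal lapse $\ralpha[t]$ and the second fundamental form $\outzFund[t]$, so one must argue that these decay fast enough to keep the constants in Proposition~\ref{stability_operator_invertible} uniform. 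Once this is established, the IFT argument proceeds in the same way as the openness step in the proof of Theorem~\ref{Existence}.
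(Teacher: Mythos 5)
Your proposal is correct and follows essentially the same route as the paper: transport graphs between timeslices via the chart $(\time,\unisymbol x)$, apply the implicit function theorem to the expansion map $(t,f)\mapsto(\H[t]\pm\troutzFund[t])(\graphnu\nu_t f)$ whose partial derivative in $f$ at $(t_0,0)$ is the invertible operator $\jacobipm{\pm}$ of Proposition~\ref{stability_operator_invertible}, and then invoke the uniqueness Theorem~\ref{Uniqueness} to identify the resulting $\Ck^1$-family with the CE-surfaces of Theorem~\ref{Existence}. The only cosmetic difference is your extra discussion of uniform IFT constants, which the paper handles simply by noting that uniform asymptotic flatness makes $\Hradius_0$ (and the invertibility estimates) independent of $t$.
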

We also get the corresponding uniqueness result.\nopagebreak
\begin{theorem}[Uniqueness of the CE-surfaces]\label{Uniqueness}
Let $(\outM,\outg*,\outx,\outzFund,\outmomden*,\outenden)$ satisfy the assumptions of Theorem~\ref{Existence} including \eqref{assumptions_Theorem_1} and \eqref{assumptions_Theorem_2}, $\eta\in\interval0*1$ and $\c_1\ge0$ be constants, and $\pm$ be a fixed sign. There are constants ${\ccenterz}=\Cof{{\ccenterz}}[\outve][\oc][\eta]\in\interval01$, $\Hradius_0=\Cof{\Hradius_0}[\outve][\oc][\eta][\c_1]$ such that every closed hypersurface $\M\in\mathcal A^{\ve,\eta}_\Aradius(\ccenterz,\c_1)$ with constant expansion $\H\pm\troutzFund\equiv\nicefrac{{-}2}\Hradius$ and $\Hradius>\Hradius_0$ is the leaf $\M[\pm]<\Hradius>$ of the CE-foliation constructed in Theorem~\ref{Existence}.
\end{theorem}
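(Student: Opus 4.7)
The plan is to follow the pattern of uniqueness proofs in the CMC setting (e.g.\ \cite{nerz2014CMCfoliation}) and in Metzger's original CE paper \cite{metzger2007foliations}. The argument splits into two parts: first, a priori estimates showing that any $\M$ satisfying the assumptions must be $\Wkp^{2,p}$-close to a unique leaf $\M[\pm]<\Hradius>$ of the foliation constructed in Theorem~\ref{Existence}; and second, a linearization argument exploiting invertibility of the pseudo-stability operator $\jacobipm{\pm}$ from Proposition~\ref{stability_operator_invertible} (Bochner-Lichnerowicz) to rule out nontrivial deformations.

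First I would apply Proposition~\ref{Regularity_of_surfaces_in_asymptotically_flat_spaces} (the DeLellis-M\"uller type umbilicity estimate) to the hypothesis $\M\in\mathcal A^{\ve,\eta}_\Aradius(\ccenterz,\c_1)$, yielding quantitative $\Lp^2$-control on $\zFundtrf*$ and thus near-roundness of $\M$. Since $\H\pm\troutzFund\equiv\nicefrac{{-}2}\Hradius$ and $|\outzFund|_{\outg*}\le\nicefrac\oc{\rad^2}$ (the $\Ck^1_2$-vanishing assumption imported from Theorem~\ref{Existence}), the mean curvature satisfies $\H=\nicefrac{{-}2}\Hradius + O(\Hradius^{-2})$, which identifies the area radius $\Aradius$ with $\Hradius$ up to lower order terms. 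Next, testing the constant-expansion equation against the coordinate position functions $\outx^i$ and using the integral smallness assumptions \eqref{assumptions_Theorem_1} and \eqref{assumptions_Theorem_2}, I would derive an a priori bound on the Euclidean center $\centerz$ of $\M$ refining the crude estimate $|\centerz|\le{\ccenterz}\Aradius + \c_1\Aradius^{1-\eta}$: this should pin $\centerz$ close to the center of the corresponding foliation leaf $\M[\pm]<\Hradius>$ (analogous to the position estimate in the existence proof, realized via the derivative identities mentioned in the paper's proof outline).

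Armed with these estimates, $\M$ can be written as a normal graph $\graphnu\nu f$ over $\M[\pm]<\Hradius>$ with a small control $\Vert f\Vert_{\Wkp^{2,p}(\M[\pm]<\Hradius>)}\le \Cof C\,\Hradius^{-\delta}$ for some $\delta>0$, provided $\ccenterz$ is chosen small and $\Hradius_0$ large enough depending on $(\outve,\oc,\eta,\c_1)$. Since both $\M$ and $\M[\pm]<\Hradius>$ realize the same constant expansion $\nicefrac{{-}2}\Hradius$, the function $f$ satisfies
\[
0 \;=\; (\textbf H + \gewicht\,\textbf P)^{\nu}(f) - (\textbf H + \gewicht\,\textbf P)^{\nu}(0) \;=\; \Big(\int_0^1 D(\textbf H + \gewicht\,\textbf P)^{\nu}\big|_{sf}\,ds\Big)\hspace{.05em} f,
\]
with $\gewicht=\pm1$. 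The linearization at $sf=0$ is exactly $\jacobipm{\pm}$, which is invertible on the relevant subspace of $\Wkp^{2,p}(\M[\pm]<\Hradius>)$ by Proposition~\ref{stability_operator_invertible}; a standard perturbation argument, using smallness of $\Vert f\Vert_{\Wkp^{2,p}}$ and decay of the background data, propagates this invertibility along the whole segment $s\in\interval01$. This forces $f\equiv0$, hence $\M = \M[\pm]<\Hradius>$.

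The main obstacle will be Step two, the a priori center estimate: the class $\mathcal A^{\ve,\eta}_\Aradius(\ccenterz,\c_1)$ allows the center $\centerz$ to drift by up to $\ccenterz\,\Aradius$, and one must show that the integral conditions \eqref{assumptions_Theorem_1}--\eqref{assumptions_Theorem_2} nevertheless force $\centerz$ into a tiny neighborhood of the foliation center, because otherwise a potential competing CE-sphere with a different center could a priori exist. This is exactly the place where the absence of the ADM-linear-momentum obstruction must be used quantitatively (via the last integral of \eqref{assumptions_Theorem_2}, cf.\ Proposition~\ref{linear_Momentum_small}), and it is also where the threshold constant $\ccenterz=\Cof{{\ccenterz}}[\outve][\oc][\eta]$ will be fixed. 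Once this estimate is in place, the remaining steps are the standard graph-representation and linearized-uniqueness machinery.
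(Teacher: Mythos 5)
Your step one (applying Proposition~\ref{Regularity_of_surfaces_in_asymptotically_flat_spaces} to $\M$) is fine, but the argument breaks down at the step you yourself single out as \lq the main obstacle\rq, and that step is not a technicality -- it is essentially the whole content of the theorem. Membership in $\mathcal A^{\ve,\eta}_\Aradius(\ccenterz,\c_1)$ only gives $\vert\centerz\,\vert\le\ccenterz\,\Aradius+\c_1\,\Aradius^{1-\eta}$, so a competing CE sphere may a priori be displaced from the constructed leaf $\M[\pm]<\Hradius>$ by an amount of order $\ccenterz\,\Hradius+\c_1\,\Hradius^{1-\eta}$. Your linearization argument needs $\Vert f\Vert_{\Wkp^{2,p}}\le C\,\Hradius^{-\delta}$, and even much weaker smallness would not suffice: the smallest eigenvalue of $\jacobipm{\pm}$ on the translational subspace is only of size comparable to $\nicefrac{\vert\mHaw\vert}{\Hradius^3}$ (Proposition~\ref{stability_operator_invertible}), so to propagate invertibility along the segment $s\mapsto sf$ the change of the linearized operator must be controlled below $\Hradius^{-3}$; under a displacement $d$ the curvature terms change at best like $d\cdot\Hradius^{-\frac72-\outve}$ (and the paper assumes no pointwise bounds on derivatives of $\outric*$ at all), which would require $d\ll\Hradius^{\frac12+\outve}$ -- incompatible with the admissible displacement for fixed $\ccenterz>0$ or for $\eta\le\frac12-\outve$. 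The sketch you offer to close this gap (testing the CE equation against $\outx^i$ and invoking \eqref{assumptions_Theorem_1}--\eqref{assumptions_Theorem_2}) is not carried out and is not obviously available at this level of generality: in the existence proof the position of the leaves is controlled only through the $\gewicht$-derivative estimates along the deformation starting from the CMC leaf (Lemmas~\ref{decay_rnu_full} and \ref{Derivative_estimates}), a tool that does not exist for an arbitrary competing CE sphere; moreover the center cannot be pinned to the coordinate origin \cite{cederbaumnerz2013_examples}, so it would have to be matched to the (a priori unknown) center of the leaf, which is precisely the statement to be proven.

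The paper avoids this issue by a different mechanism: given the competing surface $\M$, it re-runs the open-closed argument of Section~\ref{existence} in the weight $\gewicht$, with the interval of Notation~\ref{intervalI} now anchored at $\gewicht={\pm}1$, i.\,e.\ requiring $\Phi({\pm}1,\sphere^2)=\M$ instead of prescribing the $\gewicht=0$ slice. This deforms $\M$ through surfaces of constant $\gewicht$-weighted expansion down to $\gewicht=0$, producing a CMC surface that still lies in the admissible class; the CMC uniqueness theorem (Theorem~\ref{CMC_Uniqueness}) identifies it with the leaf from Theorem~\ref{existence_of_CMC_leaf}, and the uniqueness of the deformation $\Phi$ (Lemma~\ref{Psi_extendable}) then forces $\M=\M[\pm]<\Hradius>$. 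In other words, the hard positional information is imported from the known CMC uniqueness result rather than re-derived; to salvage your direct graph-plus-linearization approach you would either have to prove the strong center estimate yourself or reroute through $\gewicht=0$ as the paper does.
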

We see that these existence and uniqueness theorems imply the descriptive versions (Corollaries~\ref{Existence_descriptive} and \ref{Uniqueness_descriptive}), if the second inequality in \eqref{assumptions_Theorem_2} holds under the assumptions made in these corollaries. We prove this in Proposition~\ref{linear_Momentum_small}.\smallskip\pagebreak[3]

As explained in the introduction, Huisken-Yau proved that any asymptotically Schwarzschildean three-dimensional manifold can be foliated (near infinity) by hypersurfaces with constant mean curvature (CMC) and that these CMC-surfaces satisfy strong decay assumptions \cite{huisken_yau_foliation}. Later, this was generalized by Metzger, Huang, Eichmair-Metzger, and other assuming asymptotically flatness and different asymptotically symmetry conditions on the components $\outg_{ij}$ of the metric $\outg*$ \cite{metzger2007foliations,Huang__Foliations_by_Stable_Spheres_with_Constant_Mean_Curvature,metzger_eichmair_2012_unique}.\footnote{In fact, Metzger and Eichmair-Metzger assumed that the manifold is asymptotically equal to the (spatial) Schwarz\-schild solution and Huang assumed the Regge-Teitelboim conditions, i.\,e.\ asymptotic symmetry with respect to the coordinate origin.} The author proved that these results remain true for $\Ck^2_{\frac12+\outve}$-asymptotically flat manifolds \cite[Thm~3.1]{nerz2014CMCfoliation}.
\begin{theorem}[Existence of the CMC-surfaces, {\cite[Thms~3.1, 3.2]{nerz2014CMCfoliation}}]\label{existence_of_CMC_leaf}
Let $(\outM,\outg*,\outx)$ be a $\Ck^2_{\frac12+\outve}$-asymptotically flat, three-dimensional Riemannian manifold and non-vanishing mass $\mass\neq0$ \normalbrace{for some ${\outve}>0$}. There exist constants $\Hradius_0=\Cof{\Hradius_0}[\mass][{\outve}][\oc]$ and $\c=\Cof{\c}[\mass][{\outve}][\oc]$, a compact set $\outsymbol K\subseteq\outM$, and a $\Ck^1$-diffeomorphism $\Phi:\interval{\Hradius_0}\infty\times\sphere^2\to\outM\setminus\outsymbol K$ such that each  $\M<\Hradius>:=\Phi(\Hradius,\sphere^2)$ has constant mean curvature $\H<\Hradius>\equiv\nicefrac{{-}2}\Hradius$ and satisfies $\M<\Hradius>\in\mathcal A^{\ve,\ve}_{\Aradius(\Hradius)}(0,\c)$ for every $\Hradius>\Hradius_0$, where $\Aradius(\Hradius):=\sqrt{\nicefrac{\volume{\M<\Hradius>}}{4\pi}}$.\pagebreak[3]
\end{theorem}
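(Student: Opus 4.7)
Since this statement is a direct restatement of the author's earlier result \cite[Thms~3.1, 3.2]{nerz2014CMCfoliation}, the proof strategy is effectively that of the cited paper. The plan is to produce the CMC-foliation via a continuity/open--closed argument in the family of artificial metrics $\outg[s]_{ij} := \schwarzoutg_{ij} + s\,(\outg_{ij} - \schwarzoutg_{ij})$ for $s\in\interval*0*1$, where $\schwarzoutg*$ is the Schwarzschild metric with the correct mass. At $s=0$ the CMC-foliation is classical (concentric Schwarzschild spheres), and for each $s$ one seeks a CMC-foliation in the class $\mathcal A^{\ve,\ve}_{\Aradius(\Hradius)}(0,\c)$ with $\H\equiv\nicefrac{{-}2}\Hradius$. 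Let $\intervalJ\subseteq\interval*0*1$ be the set of parameters for which such a foliation exists with the desired estimates; one shows $0\in\intervalJ$ and then proves $\intervalJ$ is both open and closed.

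For openness, the plan is to apply the implicit function theorem to the mean-curvature map $\textbf H^{\nu}:\Wkp^{2,p}(\M<\Hradius>)\to\Lp^p(\M<\Hradius>)$, $f\mapsto\H(\graphnu\nu f)$, at a leaf $\M<\Hradius>$ of a CMC-foliation at $s_0\in\intervalJ$. Its linearization is the (conventional) stability operator $\jacobiext*=\laplace+(\outric*(\nu,\nu)+\trtr\zFund\zFund)$ of \eqref{stability_operator}. The main work is to show that $\jacobiext*$ is invertible for $\Hradius$ large, uniformly in $s\in\intervalJ$. This amounts to showing that $\jacobiext*$ is a small perturbation of the Schwarzschild stability operator on a sphere and then dealing with its approximate kernel, which in the Schwarzschild case consists of the three coordinate translations. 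One removes this kernel by imposing a centering condition on $\M<\Hradius>$; the $\Ck^2_{\frac12+\outve}$-decay of $\outric-\schwarzric$ ensures the remaining eigenvalue estimate.

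For closedness, the plan is to establish a priori $\Ck^{2,\alpha}$ estimates on the CMC-leaves in the class $\mathcal A^{\ve,\ve}_{\Aradius(\Hradius)}(0,\c)$. The key inputs are: the area--radius relation $\Aradius(\Hradius)=\Hradius+o(\Hradius)$ from $\H\equiv\nicefrac{{-}2}\Hradius$ and the decay of $\outg-\eukoutg*$; the Hawking-mass formula and its convergence to $\mass\neq0$ along the foliation, combined with Simon-type monotonicity estimates to control $\int_{\M<\Hradius>}\H^2\d\mug-16\pi$ by $\c\,\Aradius^{-\ve}$; and the improved-regularity result of DeLellis--M\"uller (as used in \cite[Prop.~2.4]{nerz2014CMCfoliation}) to convert \lq almost umbilic\rq\ into genuine $\Ck^{2,\alpha}$ control via the conformal graph representation over a Euclidean round sphere. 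Centering then follows from balancing arguments applied to the first eigenfunctions of $\jacobiext*$ against the Euclidean coordinate functions, showing $\vert\centerz\,\vert \le \c\,\Aradius^{1-\ve}$.

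The foliation property then comes from the positivity of the lapse $\rnu<\Hradius>$ of the constructed family, which follows from $\partial[\Hradius]@\H<\Hradius>=\jacobiext*\rnu<\Hradius>=\Hradius^{-2}$ and the lowest-eigenvalue estimate for $\jacobiext*$ showing $\rnu<\Hradius>>0$; and uniqueness in the class $\mathcal A^{\ve,\ve}_{\Aradius}(0,\c)$ is similarly a consequence of the invertibility of $\jacobiext*$. The hardest step is the uniform invertibility of $\jacobiext*$ with the weak decay $\Ck^2_{\frac12+\outve}$: under Schwarzschildean decay $\outric-\schwarzric=\mathcal O(\rad^{-3-\ve})$ the perturbation argument is immediate, but here one only has $\outric-\schwarzric=\mathcal O(\rad^{-2-(\frac12+\outve)})$, so the spectral gap must be squeezed out by careful use of the centering condition and the integral identity $\int \outric*(\nu,\nu)=4\pi\,\nicefrac{\mass}{\Aradius}+o(\Aradius^{-1})$, exactly as in \cite{nerz2014CMCfoliation}.
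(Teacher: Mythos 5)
This statement is not proved in the present paper --- it is quoted from \cite{nerz2014CMCfoliation} --- and your sketch follows precisely the strategy of that reference as the paper itself summarizes it: the interpolation between the Schwarzschild metric and the given metric, the open--closed argument via the implicit function theorem for the mean-curvature map, uniform invertibility of the stability operator under the weak $\Ck^2_{\frac12+\outve}$ decay, and the DeLellis--M\"uller-based a priori estimates. Hence the proposal is correct and takes essentially the same approach as the cited proof.
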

Furthermore, there is a corresponding uniqueness theorem for the CMC-surfaces.\nopagebreak
\begin{theorem}[Uniqueness of the CMC-surfaces, {\cite[Thm~3.3]{nerz2014CMCfoliation}}]\label{CMC_Uniqueness}
Let $(\outM,\outg*,\outx)$ be a $\Ck^2_{\frac12+\outve}$-asymptotically flat, three-dimensional Riemannian manifold with non-vanishing mass $\mass\neq0$ \normalbrace{for some ${\outve}>0$}. For every constants $\eta\in\interval0*1$, $\ccenterz\in\interval*01$, and $\c_1>0$, there is a constant $\Aradius_1=\Cof{\Aradius_1}[\mass][\outve][\oc][\eta][\ccenterz][\c_1]$ such that every closed hypersurface $\M\in\mathcal A^{\ve,\eta}_\Aradius(\ccenterz,\c_1)$ with radius $\Aradius=\sqrt{\nicefrac{\volume\M}{4\pi}}>\Aradius_1$ and constant mean curvature $\H\equiv:\nicefrac{{-}2}\Hradius$ coincides with the CMC surface $\M<\Hradius>$ constructed in Theorem~\ref{existence_of_CMC_leaf}.\pagebreak[3]
\end{theorem}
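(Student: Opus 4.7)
The plan is to show that any $\M\in\mathcal A^{\ve,\eta}_\Aradius(\ccenterz,\c_1)$ with $\H\equiv\nicefrac{{-}2}\Hradius$ must coincide with the corresponding leaf $\M<\Hradius>$ of the CMC foliation from Theorem~\ref{existence_of_CMC_leaf}, by the same continuity-plus-linearization scheme used for the CE-uniqueness statement Theorem~\ref{Uniqueness}: write $\M$ as a small normal graph over $\M<\Hradius>$, linearize the CMC equation, and invert the resulting stability operator.

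First I would produce the graph representation $\M=\graphnu{\nu<\Hradius>}f$. The bound $\int_{\M}\H^2\d\mug-16\pi(1{-}\genus)\le\nicefrac{\c_1}{\Aradius^\eta}$ together with the constancy of $\H$ and the Gau\ss\ equation gives a quantitative $\Lp^2$-bound on $\ktrf$ of order $\Aradius^{-\eta/2}$, absorbing the ambient curvature contribution via the decay \eqref{decay_g}. Proposition~\ref{Regularity_of_surfaces_in_asymptotically_flat_spaces} (the DeLellis-M\"uller almost-umbilicity estimate mentioned in the excerpt) then places $\M$ inside a small tubular neighborhood of a round Euclidean sphere of radius $\Aradius$ about its center $\centerz$, with quantitative $\Wkp^{2,p}$-error for some $p>2$; the analogous statement for $\M<\Hradius>$ is part of Theorem~\ref{existence_of_CMC_leaf}. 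Since $\vert\centerz\vert\le\ccenterz\Aradius+\c_1\Aradius^{1-\eta}$ and the foliation map $\Phi$ is a $\Ck^1$-diffeomorphism, normal-exponential transport from $\M<\Hradius>$ reaches $\M$ and yields a well-defined $f\in\Wkp^{2,p}(\M<\Hradius>)$ with quantitatively small norm once $\Aradius$ is large enough.

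With $\textbf H$ as in \eqref{mean_curvature_map}, the CMC condition $\textbf H(f)\equiv\H<\Hradius>=\textbf H(0)$ Taylor-expands to
\begin{equation*}
 \jacobiext*f+\mathcal Q(f,\levi*f,\Hess*f)=0,
\end{equation*}
where by \eqref{stability_operator}, $\jacobiext*f=\laplace f+(\outric*(\nu,\nu)+\trtr\zFund\zFund)f$ and $\mathcal Q$ is quadratic in its arguments with coefficients controlled by the geometry of $\M<\Hradius>$. On a CMC leaf of an asymptotically flat manifold with $\mass\neq0$ the potential $\outric*(\nu,\nu)+\trtr\zFund\zFund$ has leading term $\nicefrac6{\Hradius^2}+\mathcal O(\Aradius^{-\nicefrac52-\ve})$, so that $\jacobiext*$ is, up to an approximate three-dimensional kernel spanned by the coordinate translations, invertible with operator norm controlled by $\nicefrac{\Aradius^3}{\mass}$. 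This is the very spectral picture whose CE-analogue appears as Proposition~\ref{stability_operator_invertible} in the present paper.

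The main obstacle, and the place where the parameters defining $\mathcal A^{\ve,\eta}_\Aradius(\ccenterz,\c_1)$ enter decisively, is controlling the three translation components of $f$. These project precisely onto the difference of the Euclidean centers of $\M$ and $\M<\Hradius>$, which the hypothesis alone bounds only by $\c_1\Aradius^{1-\eta}$. To close the argument I would show that the translation moments of $f$ are themselves driven at quadratic order -- by $\mathcal Q$ and by the off-diagonal part of the ambient Ricci tensor -- and combine this with invertibility on the complement of the near-kernel to obtain a closed estimate of the form
\begin{equation*}
 \Vert f\Vert_{\Wkp^{2,p}(\M<\Hradius>)}\le C\,\Aradius^{-\eta}\,\Vert f\Vert_{\Wkp^{2,p}(\M<\Hradius>)}
\end{equation*}
with $C$ depending on $\mass$, $\ve$, $\oc$, $\eta$, $\ccenterz$, and $\c_1$. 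For $\Aradius>\Aradius_1$ sufficiently large this forces $f\equiv0$, i.e.\ $\M=\M<\Hradius>$.
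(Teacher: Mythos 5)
This theorem is not proven in the present paper at all: it is imported verbatim from \cite{nerz2014CMCfoliation} (Thm~3.3), and the only uniqueness argument the paper does carry out, the proof of Theorem~\ref{Uniqueness}, deliberately avoids your strategy -- it runs a continuity argument in the weight $\gewicht$ from $\pm1$ back to $\gewicht=0$ and at that point \emph{invokes} the quoted CMC theorem. So there is no in-paper proof for your sketch to parallel, and judged on its own merits your outline has a genuine gap at its central step, the claim that the graph function $f$ of $\M$ over $\M<\Hradius>$ is quantitatively small and satisfies a closed estimate $\Vert f\Vert\le C\Aradius^{-\eta}\Vert f\Vert$.

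Concretely, membership in $\mathcal A^{\ve,\eta}_\Aradius(\ccenterz,\c_1)$ only gives $\vert\centerz\,\vert\le\ccenterz\,\Aradius+\c_1\,\Aradius^{1-\eta}$ with $\ccenterz\in\interval*01$ an \emph{arbitrary} fixed constant in the statement you are proving, whereas the leaf $\M<\Hradius>$ is centered up to $O(\Aradius^{1-\ve})$ by Theorem~\ref{existence_of_CMC_leaf}. The normal graph of $\M$ over $\M<\Hradius>$ therefore exists but its translation part can be as large as $\ccenterz\,\Aradius+\c_1\,\Aradius^{1-\eta}$, i.\,e.\ comparable to the radius itself, so the Taylor expansion $\textbf H(f)=\textbf H(0)+\jacobiext*f+\mathcal Q(f)$ with a harmless quadratic remainder is simply not available uniformly. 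Even if $\ccenterz$ were small, on the approximate kernel the linear response is only of order $\mass\,\Vert\transf\Vert\,\Hradius^{-3}\sim\mass\,\Aradius^{-2-\eta}$ (this is the $\nicefrac{6\mHaw}{\Hradius^3}$ eigenvalue estimate, cf.\ the CE analogue in Proposition~\ref{stability_operator_invertible}), while the naive bound $\vert\mathcal Q(f)\vert\lesssim\vert f\vert^2\Hradius^{-3}$ applied to a translation of size $\Aradius^{1-\eta}$ gives $\Aradius^{-1-2\eta}$, which dominates the linear term for every $\eta<1$. Closing the loop thus requires showing that $\mathcal Q$ nearly annihilates translations up to terms governed by the \emph{asymmetric} part of the metric and of $\outric*$ along $\M$ -- and without Regge--Teitelboim-type symmetry, which is exactly the hypothesis this theorem is designed to avoid, that is the actual difficulty; your sentence that the translation moments of $f$ are \lq driven at quadratic order\rq\ assumes this rather than proves it. (A minor slip besides: the zeroth-order coefficient of the stability operator is $\approx\nicefrac2{\Hradius^2}$, not $\nicefrac6{\Hradius^2}$; the decisive quantity is the $\approx\nicefrac{6\mHaw}{\Hradius^3}$ eigenvalue on the translation modes.)
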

Again, we note that the corresponding results were also proven by Huisken-Yau, Metzger, Huang, Eichmair-Metzger, and others for the corresponding decay assumptions on $\outg*$ \cite{huisken_yau_foliation,metzger2007foliations,Huang__Foliations_by_Stable_Spheres_with_Constant_Mean_Curvature,metzger_eichmair_2012_unique}.\pagebreak[3]

We will use the following regularity result proven by the author in \cite[Prop.~2.4]{nerz2014CMCfoliation} -- we again note that a similiar result was proven by Metzger in the setting that the surrounding manifold $(\outM,\outg*)$ is asymptotically equal to the (spatial) Schwarzschild solution.
\begin{proposition}[Regularity of surfaces in asymp.~flat spaces, {\cite[Prop.~2.4]{nerz2014CMCfoliation}}]\label{Regularity_of_surfaces_in_asymptotically_flat_spaces}
Let $(\M,\g*)$ be a closed, oriented hypersurface in a $\Ck^2_{\frac12+\outve}$-asymptotically flat three-dim\-en\-sio\-nal Riemannian manifold $(\outM,\outg*)$ and let $\eta\in\interval0*{\outve}$, $\ccenterz\in\interval*01$, $\c_1\ge0$, and $p\in\interval2\infty$ be constants. If $\M\in\mathcal A^{\ve,\eta}_\Aradius(\ccenterz,\c_1)$ is a closed hypersurface with
\[
 \exists\,\mean\H(\M)\in\R:\qquad\ 
 \Vert \H - \mean\H(\M) \Vert_{\Wkp^{1,p}(\M)} \le \frac{\c_1}{\Aradius^{\frac32+\ve-\frac2p}},
\]
then there are constants $\Aradius_1=\Cof{\Aradius_1}[\outve][\oc][\ccenterz][\c_1][\eta][p]$ and $C=\Cof[\outve][\oc][\ccenterz][\c_1][\eta][p]$ such that $\M$ is a sphere and
\begin{equation*} \Aradius^{-1}\,\Vert\hspace{.05em}\zFundtrf\Vert_{\Hk(\M)} + \Vert\hspace{.05em}\zFundtrf\Vert_{\Lp^\infty(\M)} \le \frac C{\Aradius^{\frac32+\outve}} \labeleq{Regularity_of_surfaces_in_asymptotically_flat_spaces__ineq_k} \end{equation*}
if $\Aradius>\Aradius_1$.\footnote{In fact, we get $\Vert\zFundtrf\Vert_{\Wkp^{1,p}(\M)}\le\nicefrac C{\Aradius^{\frac32+\ve-\frac2p}}$ for any $p\in\interval*1\infty$.} In particular, \cite[Thm~1.1]{DeLellisMueller_OptimalRigidityEstimates} implies that there is a center point $\centerz\in\R^3$ and a function $f\in\Ck^2(\sphere^2;\R)$ such that
\begin{equation*}
	\M = \graph f, \qquad
	\Vert f\Vert_{\Wkp^{2,\infty}(\sphere^2_\Aradius(\centerz\,))} \le C\,\Aradius^{\frac12-\outve},\qquad
	\vert\centerz\,\vert\le\ccenterz\,\Aradius + C\,\Aradius^{1-\eta}. \labeleq{Regularity_of_surfaces_in_asymptotically_flat_spaces__ineq_f} \pagebreak[3]\end{equation*}
\end{proposition}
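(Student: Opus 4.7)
The plan is to combine the Gauss equation with Gauss--Bonnet in order to convert the smallness of the Willmore-type deficit $\int_{\M}\H^2\d\mug - 16\pi(1-\genus)$ into an $\Lp^2$-smallness of $\hspace{.05em}\zFundtrf$, then to bootstrap via a Simons-type identity and the assumed $\Wkp^{1,p}$-control of $\H-\mean\H(\M)$ to reach the sharp rate $\Aradius^{-\frac32-\outve}$, and finally to invoke DeLellis--M\"uller for the Euclidean graph representation \eqref{Regularity_of_surfaces_in_asymptotically_flat_spaces__ineq_f}. Note that the Willmore energy lower bound for closed orientable surfaces in a nearly Euclidean ambient space, together with $\int_{\M}\H^2\d\mug\le 16\pi(1-\genus)+\c_1\Aradius^{-\eta}$, already forces $\genus=0$ for $\Aradius$ large, so $\M$ is a topological sphere from the start.

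First, since $\M$ is two-dimensional, the Gauss equation takes the form
\[ \scalar = \outsc - 2\outric*(\nu,\nu) + \tfrac12\H^2 - |\hspace{.05em}\zFundtrf|^2. \]
Integrating and applying Gauss--Bonnet, $\int_{\M}\scalar\d\mug=8\pi$, gives the exact identity
\[ \int_{\M}|\hspace{.05em}\zFundtrf|^2\d\mug = \tfrac12\Bigl(\int_{\M}\H^2\d\mug - 16\pi\Bigr) + \int_{\M}(\outsc - 2\outric*(\nu,\nu))\d\mug. \]
The first term on the right is bounded by $\c_1/(2\Aradius^\eta)$ by the definition of $\mathcal A^{\ve,\eta}_\Aradius(\ccenterz,\c_1)$. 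The ambient-curvature integral is controlled by $\rad^2|\outric*|+\rad^{5/2}|\outsc|\le\oc\rad^{-\frac12-\outve}$ from \eqref{decay_g} together with $\Aradius^{4+\eta}\le\min_{\M}\rad^{5+2\outve}$ and $\volume\M\sim\Aradius^2$: both summands decay at some positive rate $\Aradius^{-\eta'}$ depending only on $\outve$ and $\eta$. This yields the baseline bound $\Vert\hspace{.05em}\zFundtrf\Vert_{\Lp^2(\M)}^2 \le C\Aradius^{-\eta'}$.

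To reach the sharp rate required in \eqref{Regularity_of_surfaces_in_asymptotically_flat_spaces__ineq_k}, I regard the Simons-type identity
\[ \laplace\hspace{.05em}\zFundtrf = \Hess\,\H + (\text{quadratic in }\hspace{.05em}\zFund) + (\text{terms involving }\outric\text{ and its first derivatives}) \]
as a linear elliptic system for $\hspace{.05em}\zFundtrf$. Under the hypothesis $\Vert\H-\mean\H(\M)\Vert_{\Wkp^{1,p}(\M)}\le\c_1\Aradius^{-\frac32-\outve+\frac2p}$, the $\Hess\,\H$-contribution enters at the target rate in the appropriate negative Sobolev norm; the quadratic part is absorbed once the baseline $\Lp^2$-bound is available, and the ambient part is of size $\Aradius^{-\frac32-\outve}$ by \eqref{decay_g}. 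Standard linear elliptic $\Lp^p$-theory on the nearly-round $\M$ then yields $\Vert\hspace{.05em}\zFundtrf\Vert_{\Wkp^{1,p}(\M)} \le C\Aradius^{-\frac32-\outve+\frac2p}$, from which \eqref{Regularity_of_surfaces_in_asymptotically_flat_spaces__ineq_k} follows via the Sobolev embedding $\Wkp^{1,p}(\M)\hookrightarrow\Lp^\infty(\M)$ for $p>2$ on the two-dimensional surface together with an interpolation argument.

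Finally, \eqref{Regularity_of_surfaces_in_asymptotically_flat_spaces__ineq_f} is obtained by applying \cite[Thm~1.1]{DeLellisMueller_OptimalRigidityEstimates} after transferring the smallness of the $\outg*$-intrinsic $\hspace{.05em}\zFundtrf$ to the trace-free extrinsic curvature of $\M\hookrightarrow(\R^3,\eukoutg*)$; the discrepancy is controlled by the Christoffel symbols of $\outg*$ relative to $\eukoutg*$, which are admissible by \eqref{decay_g}. The identification of the center $\centerz$ produced by DeLellis--M\"uller with the Euclidean coordinate center of Definition~\ref{Not_of-center} (up to an $\Aradius^{1-\eta}$ error) uses the asymptotic concentricity hypothesis $|\centerz\,|\le\ccenterz\Aradius+\c_1\Aradius^{1-\eta}$ directly. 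The main technical obstacle throughout is to keep all ambient-curvature error terms strictly below the target rate $\Aradius^{-\frac32-\outve}$, which forces one to use the weak coordinate bound $\Aradius^{4+\eta}\le\min_{\M}\rad^{5+2\outve}$ carefully at every step of the elliptic bootstrap, since perfect concentricity of $\M$ cannot be assumed.
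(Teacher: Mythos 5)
The paper does not prove this proposition at all: it is imported verbatim from \cite[Prop.~2.4]{nerz2014CMCfoliation}, so your sketch has to be measured against the proof given there. Your opening and closing steps coincide with it: the Gauss equation, Gauss--Bonnet, the decay \eqref{decay_g}, and the membership $\M\in\mathcal A^{\ve,\eta}_\Aradius(\ccenterz,\c_1)$ give the baseline $\Lp^2$-smallness of $\zFundtrf*$ and exclude higher genus, and De Lellis--M\"uller yields \eqref{Regularity_of_surfaces_in_asymptotically_flat_spaces__ineq_f}. The cited proof, however, applies De Lellis--M\"uller \emph{immediately after} the $\Lp^2$-bound, so that the nearly-round graph structure is available, and then reaches the sharp rate through the \emph{first-order} Codazzi equation, which expresses the divergence of $\zFundtrf*$ in terms of $\levi*\H$ and the tangential part of $\outric*(\nu,\cdot)$, combined with injectivity and $\Wkp^{1,p}$-estimates for the divergence operator on trace-free symmetric $2$-tensors over nearly round spheres (there are no holomorphic quadratic differentials on $\sphere^2$); this is what converts the hypothesis on $\H-\mean\H(\M)$ and the ambient curvature directly into the rate $\Aradius^{-\frac32-\ve+\frac2p}$ claimed in the footnote.

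Your replacement of that step by a second-order Simons-type identity has genuine gaps. First, the hypothesis controls only $\H-\mean\H(\M)$ in $\Wkp^{1,p}$, not $\Hess\,\H$; to make the $\Hess\,\H$ term enter ``at the target rate in a negative norm'' you must integrate by parts against the solution operator, and once you do so you are effectively back to the first-order Codazzi structure -- this is precisely where the estimate lives and it is not carried out. Second, the zeroth-order terms of the Simons identity (e.g.\ $\H^2$ times $\zFundtrf*$) have coefficients of size $\Aradius^{-2}$, which is exactly the scale of the spectral gaps of the Laplacian of the induced metric; ``standard linear elliptic $\Lp^p$-theory'' therefore does not give an invertible estimate without ruling out kernel or near-resonance at that scale, a problem the divergence-operator route avoids by construction. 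Third, your elliptic estimates are performed ``on the nearly-round $\M$'', but near-roundness is only provided by De Lellis--M\"uller, which you invoke last; the order must be reversed. Finally, the off-center integration of the ambient curvature, which you defer with the remark that one must use $\Aradius^{4+\eta}\le\min_{\M}\rad^{5+2\outve}$ ``carefully'', is not routine: it is exactly where that exponent is consumed, and without it the $\outric*$-contributions are not below the target rate, so the quantitatively delicate part of the argument is left undone.
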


From now on, we assume that the assumptions of Theorem~\ref{Existence} are satisfied, including \eqref{assumptions_Theorem_1} and \eqref{assumptions_Theorem_2} (for some $\outck$ which we will fix later).
Now, we can rigorously define the interval $\intervalI$.
\begin{notation}[Interval \texorpdfstring\intervalI{\intervalI*}]\label{intervalI}
Let $\ccenterz\in\interval*01$, $\c\ge0$, and $\Hradius_0<\infty$ be constants, let $\intervalI=\Cof{\intervalI}[\ccenterz][\c][\Hradius_0]\subseteq\interval*-1*1$ be an interval, and $\{\Phi<\Hradius\!>:\intervalI\times\sphere^2\to\outM\}_{\Hradius>\Hradius_0}$ be a family of maps satisfying for every $\Hradius>\Hradius_0$:
\begin{enumerate}[label={\normalfont(I-\arabic*)}]
\item $\Phi<\Hradius>\in\Ck^1(\intervalI;\Wkp^{1,p}(\sphere^2;\outM))$ for some $p\in\interval2\infty$, i.\,e.\ $\outx\circ\Phi<\Hradius>$ is continuously differentiable as map from $\intervalI$ to the Banach space $\Wkp^{1,p}(\sphere^2;\R^3):=\{(f_i)_{i=1}^3\;|\;f_i\in\Wkp^{1,p}(\sphere^2)\}${\normalfont;}\footnote{Note that $\Phi<\Hradius>$ can be chosen (at least) continuously differentiable as map from $\intervalI\times\sphere^2$ to $\outM$, but this will not matter in the following.} \label{I_psi_C1}
\item $0\in\intervalI$ and $\Phi<\Hradius>(0,\cdot)$ is continuously differentiable; \label{I_differentiable}
\item $\partial*_\gewicht\,(\Phi<\Hradius>)$ is orthogonal to $\M<\Hradius>[\gewicht\,]:=\Phi<\Hradius>(\gewicht,\sphere^2)${\normalfont;}
\item $\M<\Hradius>[\gewicht\,]$ has constant $\gewicht$-weighted expansion, i.\,e.\ $\H<\Hradius>[\gewicht]{+}\gewicht\,\tr<\Hradius>[\gewicht\,]\outzFund\equiv\nicefrac{{-}2}\Hradius$ for every $\gewicht\in\intervalI${\normalfont;} \label{I_CwER}
\item $\M<\Hradius>[\gewicht\,]\in\mathcal A^{\ve,\ve}(\ccenterz,\c)$ for every $\gewicht\in\intervalI${\normalfont;}\label{J_assumptions}
\item $\intervalI$ is maximal, i.\,e.\ if the assumptions~\ref{I_psi_C1}--\ref{J_assumptions} hold for all $\Hradius>\Hradius_0$, an interval $\intervalI'\subseteq\interval*-1*1$, and maps $\{\Phi<\Hradius>':\intervalI'\times\sphere^2\to\outM\}_{\Hradius>\Hradius_0}$, then $\intervalI'\subseteq\intervalI$.\label{I_maximal}
\end{enumerate}
The metric and derived quantities of such a sphere $\M<\Hradius>[\gewicht\,]$ are denoted by $\g<\Hradius>[\gewicht\,]$ etc.\pagebreak[1]
\end{notation}
In particular, \intervalI is (for sufficiently large $\c$ and $\Hradius_0$) non-empty as $0\in\intervalI$ due to Theorem~\ref{existence_of_CMC_leaf} and $\Phi<\Hradius>(0,\sphere^2)$ is a CMC-surface from Theorem~\ref{existence_of_CMC_leaf}. We note that $\Phi<\Hradius>$ is a priori not uniquely defined, but its \lq start value\rq\ $\Phi<\Hradius>(0,\sphere^2)$ is uniquely determined due to Theorem~\ref{CMC_Uniqueness} -- see Lemma~\ref{I_open} for uniqueness of $\Phi$. Furthermore, $\intervalI$ depends on the choice of $\Hradius_0$, $\c$, and $\ccenterz$. In the following, we suppress this dependency and the index $\Hradius$. Additionally, we will always assume that $\Hradius>\Hradius_0$, where $\Hradius_0=\Cof{\Hradius_0}[\mass][{\outve}][\oc][\ccenterz][\c]$ is assumed to be \lq sufficiently\rq\ large. We will choose $\Hradius_0$, $\c$, and $\ccenterz$ after Lemma~\ref{I_open}. \pagebreak[3]\smallskip

As explained in the introduction, we use the same proof structure as Metzger \cite{metzger2007foliations}, i.\,e.\ prove that $\intervalI$ is open by using the implicit function theorem on the map
\[
 (\textbf H + \boldsymbol\cdot\,\textbf P)^{\nu}
	: \interval*-1*1 \times \Wkp^{2,p}(\M) \to \Lp^p(\M)
	: (\gewicht,f) \mapsto (\textbf H + \gewicht\,\textbf P)^{\nu}(\graphnu\nu f).
\]
We note that $\jacobipm{\gewicht_0}<\gewicht_0>$ is the Fr\'echet derivative of this map in the second component at $(\gewicht_0,0)$, if $p>2$. If $\jacobipm$ is invertible, we can thus use the implicit function theorem to extend $\psi$ to a neighborhood of $\intervalI$ such that assumptions~\ref{I_psi_C1}-\ref{I_CwER} are satisfied. Hence, we prove that this pseudo stability operator is invertible. This proof is analog to the one of \cite[Lemma~2.5, Prop.~2.7]{nerz2014CMCfoliation}, but we repeat it nevertheless for readers convenience.
\begin{proposition}[Pseudo stability operator is invertible]\label{stability_operator_invertible}
There are constants ${\outck}_0=\Cof{{\outck}_0}[\mass][\outve][\oc]>0$, $\ccenterz_0=\Cof{\ccenterz_0}[\mass][\outve][\oc][\c]>0$, and $\Hradius_0=\Cof{\Hradius_0}[\mass][\outve][\oc][\c]$ such that the $\gewicht$-weighted pseudo stability operator $\jacobipm{\gewicht}[\gewicht]$ of $\M[\gewicht\,]$ is invertible for every $\gewicht\in\intervalI$ if $\outck\le{{\outck}_0}$, $\ccenterz\le\ccenterz_0$, and $\Hradius>\Hradius_0$. In this case, there exists a constant $C=\Cof[\mass][\outve][\oc]$ such that
\begin{align*}
 \vert\int\jacobipm{\gewicht}\transg\,\transh\d\mug - \frac{6\,\mHaw}{\Hradius^3}\int\transg\,\transh\d\mug\vert \le{}& \frac D{\Hradius^3}\,\Vert\transg\Vert_{\Lp^2(\M)}\,\Vert\transh\Vert_{\Lp^2(\M)},\labeleq{stability_operator_eigenvalue_T}\\
 \frac1{\Hradius^2}\,\Vert h - \transh\Vert_{\Lp^2(\M)} \le{}& \Vert\jacobipm*(h - \transh)\Vert_{\Lp^2(\M)}, \labeleq{stability_operator_eigenvalue_bot} \\
 \frac{6\,\vert\mHaw\vert-D}{\Hradius^3}\,\Vert h\Vert_{\Lp^2(\M)} \le{}& \Vert\jacobipm h\Vert_{\Lp^2(\M)} \labeleq{stability_operator_eigenvalue}
\end{align*}
holds for all functions $g,h\in\Hk^2(\M)$, where $\mHaw=\mHaw(\M)$ denotes the Hawking mass of $\M$ and $D:=C(\ccenterz+\outck+\Hradius^{{-}\outve})$. Here, the \emph{translative part} $\transh$ of any function $h\in\Lp^2(\M)$ is defined as the $\Lp^2(\M)$-orthogonal project of $h$ onto the linear span of the Eigenfunctions $\eflap_i$ of the \normalbrace{negative} Laplace operator for which the corresponding eigenvalues $\ewlap_i$ satisfy $\vert\ewlap_i-\nicefrac2{\Hradius^2}\vert\le\nicefrac1{\Hradius^2}$.\pagebreak[2]
\end{proposition}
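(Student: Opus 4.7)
The proof strategy is to decompose $\jacobipm = \jacobiext + \gewicht\,\mathcal B$ following \eqref{jacobipm}, where
\[ \mathcal B\,f := 2\,\outzFundnu(\levi*f) + (\div\outzFundnu + \trtr\zFund\outzFund + \outmomden(\nu) - \H\,\outzFund_{\nu\nu})\,f \]
collects the genuinely CE-specific contributions, and to treat $\gewicht\,\mathcal B$ as a perturbation. The conventional operator $\jacobiext$ has already been analyzed for near-CMC surfaces in $\mathcal A^{\ve,\ve}(\ccenterz,\c)$ in \cite[Lemma~2.5 and Prop.~2.7]{nerz2014CMCfoliation}, where the spectral picture underlying \eqref{stability_operator_eigenvalue_T}--\eqref{stability_operator_eigenvalue} is established: translation-subspace eigenvalue $\approx 6\mHaw/\Hradius^3$ and spectral gap $\gtrsim \Hradius^{-2}$ on its $\Lp^2$-orthogonal complement. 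The remaining task is to show that $\gewicht\,\mathcal B$ contributes at most $D/\Hradius^3$ to the relevant bilinear forms.

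First I would verify that Proposition~\ref{Regularity_of_surfaces_in_asymptotically_flat_spaces} applies to $\M[\gewicht]$. The CE-condition $\H + \gewicht\,\troutzFund \equiv \nicefrac{-2}\Hradius$ together with the $\Ck^1_2$-decay of $\outzFund$ yields $\Wkp^{1,p}$-control of $\H - \overline\H = -\gewicht(\troutzFund - \overline{\troutzFund})$ at the rate demanded by that proposition, which then provides the sharp $\zFundtrf$-bound \eqref{Regularity_of_surfaces_in_asymptotically_flat_spaces__ineq_k} and a $\Wkp^{2,\infty}$-parametrization of $\M[\gewicht]$ by a Euclidean coordinate sphere of area radius $\Aradius\approx\Hradius$. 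This identifies the translative eigenfunctions of $-\laplace[\gewicht]$ with the rescaled coordinate functions $\outx^i/\Hradius$ up to an $\Hradius^{-\outve}$ error.

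The key algebraic observation for handling the first-order term in $\mathcal B$ is the integration-by-parts identity
\[ \int \bigl(2\,g\,\outzFundnu(\levi*h) + g\,h\,\div\outzFundnu\bigr)\d\mug + \int \bigl(2\,h\,\outzFundnu(\levi*g) + g\,h\,\div\outzFundnu\bigr)\d\mug = 0 \]
on the closed surface $\M[\gewicht]$, valid for all $g,h\in\Hk^1(\M[\gewicht])$. Hence the symmetric part of the first-order-plus-divergence contribution to the bilinear form of $\mathcal B$ vanishes identically; its antisymmetric remainder $\int \outzFundnu(h\,\levi*g - g\,\levi*h)\d\mug$ becomes, after identification of translative eigenfunctions with coordinate functions, exactly the Killing-vector integral bounded by $\outck$ in \eqref{assumptions_Theorem_1}. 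The remaining zero-th order contribution $\trtr\zFund\outzFund + \outmomden(\nu) - \H\,\outzFund_{\nu\nu}$ is decomposed according to $\zFund = \zFundtrf + \tfrac12\,\H\,\g*$ and the analogous splitting of $\outzFund$ into its trace $\troutzFund$, its normal-tangential part $\outzFundnu$, its normal-normal part $\outzFund_{\nu\nu}$, and its tangential trace-free part. Pairing with translative eigenfunctions and expanding in low spherical harmonics reduces each summand to either the assumed integral smallness in \eqref{assumptions_Theorem_1}--\eqref{assumptions_Theorem_2}, the sharp $\zFundtrf$-decay, or the pointwise decay of $\outmomden$ and $\outzFund$.

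The main obstacle is that $\outzFund$ only decays pointwise like $\rad^{-2}$, so a naive bound on the first-order term $\outzFundnu(\levi*f)$ gives only $\Hradius^{-2}$ in operator norm rather than the required $\Hradius^{-3}$. This is where the Bochner-Lichnerowicz formula (whose use in this setting was suggested to the author by Lan-Hsuan Huang) enters: applied to the one-form $\outzFundnu$ and to the symmetric $(0,2)$-tensor $\outzFund$, it converts pointwise data into integral identities relating $\|\outzFundnu\|_{\Hk^1(\M[\gewicht])}$ to $\|\div\outzFundnu\|_{\Lp^2(\M[\gewicht])}$ (which is controlled by $\outmomden = \outdiv(\outH\,\outg*-\outzFund)=\mathcal O(\rad^{-3-\ve})$ together with the $\outH$-integrals of \eqref{assumptions_Theorem_2}), to Ricci-type curvature couplings of order $\rad^{-5-\ve}$, and to boundary-type low-harmonic integrals that are precisely those bounded by $\outck$ in \eqref{assumptions_Theorem_1}--\eqref{assumptions_Theorem_2}. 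Combining all of the resulting perturbation bounds with the $\jacobiext$-spectrum yields \eqref{stability_operator_eigenvalue_T}, and then \eqref{stability_operator_eigenvalue_bot} and \eqref{stability_operator_eigenvalue} follow once $\outck_0$ and $\ccenterz_0$ are chosen small enough that $D < 6\,\vert\mHaw\vert$; invertibility of $\jacobipm$ then follows from the lower bound by standard elliptic theory on the closed surface $\M[\gewicht]$.
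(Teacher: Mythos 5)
Much of your skeleton coincides with the paper's actual proof: applying Proposition~\ref{Regularity_of_surfaces_in_asymptotically_flat_spaces} to $\M[\gewicht\,]$ (using that the CE-condition plus the decay of $\outzFund$ gives the required almost-constancy of $\H$), identifying the translative eigenfunctions $\eflap_i$ with rescaled coordinate functions, killing the symmetric part of the first-order-plus-divergence contribution by the divergence theorem and reducing its antisymmetric remainder to the rotational integral in \eqref{assumptions_Theorem_1} (your identity is correct and is exactly how the paper handles this term), and finally choosing ${\outck}_0$ and $\ccenterz_0$ so small that $D<6\,\vert\mHaw\vert$, with invertibility by ellipticity. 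The gap lies in how you treat the one genuinely delicate point, namely identifying the translational block with $\nicefrac{6\mHaw}{\Hradius^3}$ up to $\nicefrac D{\Hradius^3}$. Your use of Bochner--Lichnerowicz is not the paper's and does not do the job you assign to it: the paper applies the formula to the Laplace eigenfunctions $\eflap_i$ of $\M[\gewicht\,]$, integrates, and uses the Gauss equation together with $\H+\gewicht\,\troutzFund\equiv\nicefrac{{-}2}\Hradius$ to obtain the refined eigenvalue estimate $\ewlap_i\approx\frac2{\Hradius^2}+\frac{6\mHaw}{\Hradius^3}-\int(\frac\outsc2-\outric*(\nu,\nu)+\frac{\gewicht\,\troutzFund}\Hradius)\,\eflap_i^2\d\mug$; this is what replaces Metzger's explicit Schwarzschild Ricci curvature. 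Applying Bochner to the one-form $\outzFundnu$ or to $\outzFund$, as you propose, is neither needed -- the first-order term is already disposed of by your own antisymmetrization plus \eqref{assumptions_Theorem_1}, and off the translation space the bound $\Vert\outzFundnu(\levi*g)\Vert_{\Lp^p(\M)}\le C\,\Hradius^{-3}\,\Vert g\Vert_{\Wkp^{1,p}(\M)}$ (recall the $\Aradius$-weighted Sobolev norms) is already negligible against the spectral gap of order $\Hradius^{-2}$ -- nor workable as described: a Hodge/Bochner argument for $\outzFundnu$ on the closed surface would require control of both its intrinsic divergence and its curl, and the constraint $\outmomden*=\outdiv(\outH\,\outg*-\outzFund*)$ controls a three-dimensional divergence, not these surface quantities.

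More seriously, you delegate the spectral picture of $\jacobiext*$ to \cite[Lemma~2.5, Prop.~2.7]{nerz2014CMCfoliation}. Those estimates concern (almost-)CMC surfaces, whereas on $\M[\gewicht\,]$ the mean curvature is non-constant at order $\Hradius^{-2}$: since $\H$ differs from $\nicefrac{{-}2}\Hradius$ by $\gewicht\,\troutzFund$, the potential $\outric*(\nu,\nu)+\trtr\zFund\zFund$ of $\jacobiext*$ contains, through $\frac{\H^2}2$, the non-constant term $\frac{2\gewicht}\Hradius\,\troutzFund$ of size $\Hradius^{-3}$ -- exactly the order of the main term you must isolate -- and the Gauss-equation/Hawking-mass computation behind the eigenvalue estimate acquires the same corrections; moreover $\outric*(\nu,\nu)=\mathcal O(\rad^{-\frac52-\outve})$ paired with $\eflap_i\,\eflap_j$ cannot be discarded pointwise. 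These critical-order terms have to be absorbed using \eqref{assumptions_Theorem_1} and \eqref{assumptions_Theorem_2} \emph{inside} the spectral computation itself (after transferring those assumptions from the coordinate spheres and $\nicefrac{\outx_i}\rad$ to $\M[\gewicht\,]$, $\nu_i$ and then to $\eflap_i$ -- a step you only gesture at), which is precisely why the paper redoes the Bochner--Lichnerowicz argument with the CE-condition plugged in rather than quoting the CMC result. As written, your decomposition of $\jacobipm*$ into $\jacobiext*$ plus the perturbation $\gewicht\,\mathcal B$, with the CMC spectral picture taken off the shelf and the perturbation controlled by your Bochner-on-$\outzFundnu$ device, does not establish \eqref{stability_operator_eigenvalue_T} at the required precision.
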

We note that we characterized the mass $\mass$ by the limit of the Hawking masses of the Euclidean spheres $\sphere^2_\rradius(0)$. This implies that the Hawking mass of a (sufficiently large) Euclidean sphere $\sphere^2_\rradius(0)$ with respect to the surrounding metric $\outg*$ is non-vanishing. We see that this implies that any surfaces satisfying the assumptions of Proposition~\ref{Regularity_of_surfaces_in_asymptotically_flat_spaces} (for sufficiently large $\Hradius$) has non-vanishing Hawking mass. This is explained in more detail for example in \cite[Appendix~B]{nerz2014CMCfoliation}.
\begin{proof}[Proof of Proposition~\ref{stability_operator_invertible}]
We suppress the index $\gewicht\in\intervalI$ and write $D$ for any constant as in the claim of the proposition. By Proposition~\ref{Regularity_of_surfaces_in_asymptotically_flat_spaces}, there exists a function $f:\sphere^2_\Hradius(\centerz\,)\to\M$ such that
\[ \M = \graph f, \qquad \Vert f\Vert_{\Hk^3(\sphere^2_\Hradius(\centerz\,))} \le C\,\Hradius^{\frac32-\outve}, \]
where we can assume that $\centerz$ is the \emph{Euclidean coordinate center} defined by 
\[ \centerz^i := \fint_{\M}\outx^i\,\d\mug \]
and that it satisfies $\vert\centerz\,\vert\le \ccenterz\,\Hradius + C\,\Hradius^{1-\outve}$. In particular, the eigenvalues of the (negative) Laplace operator $\ewlap_i$ ($\ewlap_i\le\ewlap_{i+1}$) satisfy
\begin{equation*} \vert\ewlap_i-\frac2{\Hradius^2}\vert \le \frac C{\Hradius^{\frac52+\outve}}\quad\forall\,i\in\{1,2,3\}, \qquad\ewlap_j\ge\frac5{\Hradius^2} \quad\forall\,j>3 \labeleq{stability_operator_eigenvalue__ewlap} \end{equation*}
and the corresponding orthogonal eigenfunctions $\eflap_i$ satisfy
\begin{equation*} \Vert\Hesstrf\,\eflap_i\Vert_{\Lp^2(\M)} \le \frac C{\Hradius^{\frac52+\outve}}, \quad\Vert\levi*\eflap_i - \frac{X_i-f_i\,\nu}\Hradius \Vert_{\Lp^2(\M)} \le \frac C{\Hradius^{\frac32+\outve}} \qquad\forall\,i\in\{1,2,3\}, \labeleq{stability_operator_eigenvalue__levi_eflap_i} \end{equation*}
where $X_i\in\R^3$ is a constant vector field (depending on $i\in\{1,2,3\}$ and $\Hradius$) satisfying
\begin{equation*} X_i\cdot X_j = \delta_{ij}\,\Vert f_i\Vert_{\Lp^\infty(\M)}^2,\quad\Vert\outg*(X_i,\nu)-f_i\Vert_{\Lp^2(\M)}\le\frac C{\Hradius^{\frac12+\outve}}\qquad\forall\,i,j\in\{1,2,3\}. \labeleq{stability_operator_eigenvalue__levi_eflap_i__Xi} \end{equation*}
By the Bochner-Lichnerowicz Formel, we know
\[ \frac{\laplace\g*(\levi*\eflap_i,\levi*\eflap_j)}2 = \trtr{\Hesstrf\,\eflap_i}{\Hesstrf\,\eflap_j} + \frac{\ewlap_i\,\ewlap_j}2\,\eflap_i\,\eflap_j - \frac{\ewlap_i+\ewlap_j-\scalar}2\,\g*(\levi*\eflap_i,\levi*\eflap_j), \]
where we used $2\,\ric*=\scalar\,\g$ as $\M$ is two-dimensional. Hence, we get by integration and integration by parts
\[ \vert\frac{\ewlap_i^2}2\,\delta_{ij} - \int\frac\scalar2\g*(\levi*\eflap_i,\levi*\eflap_j)\d\mug \vert \le \frac C{\Hradius^{5+\ve}}\qquad\forall\,i,j\in\{1,2,3\}. \]
Plugging in the (pointwise) assumption on $\outzFund*$ as well as $\H+\gewicht\,\troutzFund\equiv\nicefrac{{-}2}\Hradius$, we conclude using the Gau\ss\ equation
\begin{align*}
 \left|\ewlap_i^2\,\delta_{ij} - \int(\outsc-2\outric*(\nu,\nu))\,\g*(\levi*\eflap_i,\levi*\eflap_j)\d\mug\right. \qquad\quad \\
		- \left. \int (\frac2{\Hradius^2}-\frac{2\gewicht}\Hradius\troutzFund) \g*(\levi*\eflap_i,\levi*\eflap_j)\d\mug \right| \le{}& \frac C{\Hradius^{5+\ve}}.
\end{align*}
Thus, \eqref{stability_operator_eigenvalue__levi_eflap_i} and \eqref{stability_operator_eigenvalue__levi_eflap_i__Xi} imply
\[ \vert\ewlap_i\,(\ewlap_i-\frac2{\Hradius^2})\,\delta_{ij} - \int(\outsc-2\outric*(\nu,\nu)+\frac{2\gewicht}\Hradius\troutzFund)\,\frac{\Vert\eflap_i\Vert_{\Lp^\infty(\M)}^2\,\delta_{ij}-\eflap_i\,\eflap_j}{\Hradius^2}\d\mug\vert \le \frac C{\Hradius^{5+\ve}}. \]
We know
\[ \vert \mHaw - \frac\Hradius{16\pi}\int\outsc-2\outric*(\nu,\nu) \d\mug \vert \le \frac C{\Hradius^\ve} \]
due to the Gau\ss-Bonnet theorem, the Gau\ss\ equation, and the inequalities on $\zFundtrf*$ proven in Proposition~\ref{Regularity_of_surfaces_in_asymptotically_flat_spaces}. Comparing $\eflap_i$ with its analog on the Euclidean sphere, we see
\[ \vert\Vert\eflap_i\Vert_{\Lp^\infty(\M)}^2-\frac3{4\pi\Hradius^2}\vert\le \frac C{\Hradius^{2+\outve}} \]
and we therefore get
\[ \vert(\ewlap_i\,(\ewlap_i-\frac2{\Hradius^2})-\frac{12\,\mHaw}{\Hradius^5})\,\delta_{ij} + \int(\outsc-2\outric*(\nu,\nu)+\frac{2\gewicht}\Hradius\troutzFund)\,\frac{\eflap_i\,\eflap_j}{\Hradius^2}\d\mug\vert \le \frac D{\Hradius^5}, \]
where we used the last integral inequality in \eqref{assumptions_Theorem_1}. By solving this inequality for $\ewlap_i$ and keeping $\ewlap_i\approx\nicefrac2{\Hradius^2}$ in mind, we see
\[ \vert \ewlap_i - \frac2{\Hradius^2} - \frac{6\mHaw}{\Hradius^3} + \int(\frac\outsc2-\outric*(\nu,\nu)+\frac{\gewicht\,\troutzFund}\Hradius)\,\eflap_i^2\d\mug \vert \le \frac D{\Hradius^3} \qquad\forall\,i\in\{1,2,3\} \]
and
\[ \vert \int(\outsc-2\outric*(\nu,\nu)+\frac{2\gewicht}\Hradius\troutzFund)\,\eflap_i\,\eflap_j\d\mug \vert \le \frac D{\Hradius^3} \qquad\forall\,i\neq j\in\{1,2,3\}. \]
Thus, \eqref{jacobipm} implies for $i\neq j\in\{1,2,3\}$
\begin{align*}
 \vert\int \jacobipm*\eflap_i\,\eflap_j \d\mug \vert
	\le{}& \vert\int(\outric*(\nu,\nu)+\gewicht(\frac{\troutzFund}\Hradius + \frac2\Hradius\,\outzFund_{\nu\nu}))\,\eflap_i\,\eflap_j \d\mug\vert \\
				& + \vert\gewicht\int\outzFundnu(\levi*\eflap_i)\eflap_j - \outzFundnu(\levi*\eflap_j)\eflap_i \d\mug\vert + \frac C{\Hradius^{3+\outve}} \\
	\le{}& \vert\frac{2\gewicht}\Hradius\int\outH\,\eflap_i\,\eflap_j\d\mug\vert
					+ \vert\frac\gewicht\Hradius\int\outzFund(\nu,X_i)\eflap_j - \outzFund(\nu,X_j)\eflap_i \d\mug\vert + \frac D{\Hradius^3}.
\end{align*}
Now, we want to plug in the assumptions \eqref{assumptions_Theorem_1} and  \eqref{assumptions_Theorem_2}. But, they are formulated on the Euclidean coordinate spheres and the Euclidean normals $\nicefrac{\outx_i}\rad$ instead of $\M[\gewicht]$ and $\eflap_i$. However, we can nevertheless use these assumptions: By Proposition~\ref{Regularity_of_surfaces_in_asymptotically_flat_spaces} and using the decay assumptions on the derivative of $\outzFund$, the inequalities in \eqref{assumptions_Theorem_1} and \eqref{assumptions_Theorem_2} are also satisfied for $\M$, $D$, and $\nu_i$ instead of $\sphere^2_\Hradius(0)$, $\outck$, and $\nicefrac{\outx_i}\rad$, respectively. Furthermore, we can replace $\nu_i$ by $\eflap_i$. To see this, we first note that in the model case $(\M,\g*)=(\sphere^2,\sphg<\Hradius>*)$, where $\sphg<\Hradius>$ is the standard metric on the Euclidean sphere with radius $\Hradius$, we could choose $\eflap_i=\nicefrac{\nu_i}{\Vert\nu_i\Vert_{\Lp^2(\M)}}=\sqrt{\nicefrac3{4\pi\Hradius^2}}\,\nu_i$. By Proposition~\ref{Regularity_of_surfaces_in_asymptotically_flat_spaces}, this implies
the comparability of $\{\nu_i\}_{i=1}^3$ and $\{\eflap_i\}_{i=1}^3$, i.\,e.
\begin{equation*} \Vert\eflap_i - \sum_{j=1}^3 \nu_j'\,\int\eflap_i\,\nu_j'\d\mug\Vert_{\Lp^2(\M)} \le \frac C{\Hradius^\ve}, \qquad
	\Vert\nu_i' - \sum_{j=1}^3 \eflap_j\,\int\nu_i'\,\eflap_j\d\mug\Vert_{\Lp^2(\M)} \le \frac C{\Hradius^\ve},\labeleq{comparability_nu_f} \end{equation*}
where $\nu_i':=\nicefrac{\nu_i}{\Vert\nu_i\Vert_{\Lp^2(\M)}}$. Thus, we get
\begin{align*}
 \vert\int_{\M<\Hradius>[\gewicht]}\outzFund*(\nu,\eflap_j\,X_i-\eflap_i\,X_j)\d\mug\vert \le{}& \frac D{\Hradius^2}, &
 \vert\int_{\M<\Hradius>[\gewicht]}\troutzFund \d\mug\vert \le{}& D, \label{assumptions_Theorem_1_f}\tag{\ref{assumptions_Theorem_1}{\normalfont$\!\,_\eflapsymbol$}} \\
 \vert\int_{\M<\Hradius>[\gewicht]}\outH\,\eflap_i\,\eflap_j \d\mug\vert \le \frac D{\Hradius^2}, \quad
 \vert\int_{\M<\Hradius>[\gewicht]}\outH\,\eflap_i\d\mug\vert \le{}& \frac D\Hradius, &
 \vert\int_{\M<\Hradius>[\gewicht]}\troutzFund\,\eflap_i\d\mug\vert \le{}& \frac D\Hradius.
\label{assumptions_Theorem_2_f}\tag{\ref{assumptions_Theorem_2}{\normalfont$\!\,_\eflapsymbol$}}
\end{align*}
Using the first inequalities in (\ref{assumptions_Theorem_1_f}) and (\ref{assumptions_Theorem_2_f}), we get \eqref{stability_operator_eigenvalue_T} for $g=\eflap_i$ and $h=\eflap_j$ with $i\neq j\in\{1,2,3\}$. By the corresponding calculation for $i=j\in\{1,2,3\}$, \eqref{stability_operator_eigenvalue_T} holds for $g=h=\eflap_i$ with $i\in\{1,2,3\}$. As it is sufficient to prove \eqref{stability_operator_eigenvalue_T} for $g=\transg$ and $h=\transh$, this proves \eqref{stability_operator_eigenvalue_T} for every $g,h\in\Lp^2(\M)$.

Furthermore, we know
\[ \Vert\jacobipm*g - \laplace g - \frac2{\Hradius^2}g \Vert_{\Lp^p(\M)} \le \frac C{\Hradius^{\frac52+\outve}} \Vert g \Vert_{\Wkp^{1,p}(\M)} \qquad\forall\,p\in\interval*1*\infty,\,g\in\Wkp^{2,p}(\M) \]
and \eqref{stability_operator_eigenvalue__ewlap} therefore implies
\[ \Vert\deform g\Vert_{\Lp^2(\M)} \le \frac{2\Hradius^2}3 \Vert\jacobipm*\deform g\Vert_{\Lp^2(\M)} \qquad\forall\,g\in\Hk^2(\M),\,\deform g:=g-\transg. \]
In particular, \eqref{stability_operator_eigenvalue_bot} and \eqref{stability_operator_eigenvalue} are true for any $g\in\Hk^2(\M)$ with $\transg=0$.

We get for every $g\in\Hk^2(\M)$ with $\Vert\deform g\Vert_{\Lp^2(\M)}^2 \ge \nicefrac1{\Hradius^{\frac12+\outve}}\;\Vert g\Vert_{\Lp^2(\M)}^2$, where $\deform g:=g-\transg$, \begin{align*}
 \vert\int\jacobipm g\,g\d\mug \vert
	\ge{}& \frac{{-}6m-D}{\Hradius^3}\Vert\transg\Vert_{\Lp^2(\M)}^2 + \frac3{2\Hradius^2}\Vert\deform g\Vert_{\Lp^2(\M)}^2 - \frac C{\Hradius^{\frac52+\outve}}\Vert\transg\Vert_{\Lp^2(\M)}\,\Vert\deform g\Vert_{\Lp^2(\M)} \\
	\ge{}& \frac{{-}6m-D}{\Hradius^3}\Vert\transg\Vert_{\Lp^2(\M)}^2 + \frac1{\Hradius^2}\Vert\deform g\Vert_{\Lp^2(\M)}^2
	\ge \frac1{2\Hradius^2}\Vert g\Vert_{\Lp^2(\M)}^2,
\end{align*}
i.\,e.\ \eqref{stability_operator_eigenvalue} is satisfied for each function $g\in\Hk^2(\M)$ with $\Vert\deform g\Vert_{\Lp^2(\M)}^2 \ge \nicefrac1{\Hradius^{\frac12+\outve}}\;\Vert g\Vert_{\Lp^2(\M)}^2$. On the other hand, if $\Vert\deform g\Vert_{\Lp^2(\M)}^2 \le \nicefrac1{\Hradius^{\frac12+\outve}}\;\Vert g\Vert_{\Lp^2(\M)}^2$, then the regularity of the Laplace operator implies
\begin{align*}\hspace{3em}&\hspace{-3em}
 \Vert\jacobipm g\Vert_{\Lp^2(\M)}\,\Vert g\Vert_{\Lp^2(\M)} \\
	\ge{}& \vert\int\jacobipm g\,g\d\mug \vert \\
	\ge{}& \frac{6m-D}{\Hradius^3}\Vert\transg\Vert_{\Lp^2(M)}^2 - \frac C{\Hradius^{\frac52+\outve}}\,\Vert\deform g\Vert_{\Lp^2(\M)}\,\Vert\transg\Vert_{\Lp^2(\M)} - \frac C{\Hradius^2}\Vert\deform g\Vert_{\Hk^2(\M)}\,\Vert\transg\Vert_{\Lp^2(\M)} \\
	\ge{}& \frac{6m-D}{\Hradius^3}\Vert\transg\Vert_{\Lp^2(M)}^2 - \frac C{\Hradius^{\frac52+\outve}} (\Hradius^2\,\Vert\jacobipm\,\deform g\Vert_{\Lp^2(\M)} + \Vert\deform g\Vert_{\Lp^2(\M)})\,\Vert g\Vert_{\Lp^2(\M)}
\end{align*}
and therefore \eqref{stability_operator_eigenvalue} is true for these functions, too. Thus, \eqref{stability_operator_eigenvalue} is proven. Because $\jacobipm*$ is an elliptic operator, this proves all claims of this proposition.
\end{proof}

Using the implicit function theorem, we now deduce that $\psi$ can be extended on a larger interval.
\begin{lemma}[\texorpdfstring{$\Phi$}{The Deformation} is extendable on a neighborhood of \texorpdfstring\intervalI I]\label{Psi_extendable}
Assume that $\outck\le{\outck}_0$, $\ccenterz\le\ccenterz_0$ and $\Hradius>\Hradius_0$ is satisfied for the constants ${\outck}_0=\Cof{{\outck}_0}[\mass][\outve][\oc]$, $\ccenterz_0=\Cof{\ccenterz_0}[\mass][\outve][\oc][\c][\cso][\cdelta]$, and $\Hradius_0=\Cof{\Hradius_0}[\mass][\outve][\oc][\ckappa][\c][\cso][\cdelta]$ of Proposition~\ref{stability_operator_invertible}. For any $\gewicht\in\intervalI$ with $\vert\gewicht\vert<1$ there is a $\kappa>0$ and a $\Ck^1$ map $\Psi:I\cup\interval{\gewicht-\kappa}{\gewicht+\kappa}\to\outM$ which satisfies assumptions~\ref{I_psi_C1}--\ref{I_CwER} of Notation~\ref{intervalI}. Furthermore, all maps $\Phi'$ satisfying the assumptions~\ref{I_psi_C1}--\ref{I_maximal} and $\Phi'(0,\cdot)=\Phi(0,\dot)$ coincide, i.\,e.\ $\Phi$ is uniquely determined by $\Phi(0,\cdot):\sphere^2\to\M<\Hradius>[0]$.\pagebreak[1]
\end{lemma}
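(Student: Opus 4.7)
The plan is to apply the implicit function theorem in Banach spaces, in the spirit of the outline in the introduction. Fix $\gewicht_0\in\intervalI$ with $|\gewicht_0|<1$ and consider the map
\[
 F:(-1,1)\times\Wkp^{2,p}(\M[\gewicht_0]) \longrightarrow \Lp^p(\M[\gewicht_0]),
 \qquad
 F(\gewicht,f) \;:=\; (\H+\gewicht\,\troutzFund)(\graphnu\nu f) + \tfrac{2}\Hradius,
\]
where $\graphnu\nu f$ is the normal graph over $\M[\gewicht_0]$ and the right-hand side is pulled back to $\M[\gewicht_0]$ via the normal exponential map. Then $F$ is $\Ck^1$ for $p>2$, and $F(\gewicht_0,0)=0$ by assumption~\ref{I_CwER}. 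By the formula \eqref{jacobipm} the partial Fr\'echet derivative $D_f F(\gewicht_0,0)$ is precisely the $\gewicht_0$-weighted spatial pseudo stability operator $\jacobipm{\gewicht_0}[\gewicht_0]$. For the constants $\outck_0$, $\ccenterz_0$, $\Hradius_0$ of Proposition~\ref{stability_operator_invertible} the operator $\jacobipm{\gewicht_0}[\gewicht_0]:\Wkp^{2,p}(\M[\gewicht_0])\to\Lp^p(\M[\gewicht_0])$ is an invertible elliptic operator, so the implicit function theorem yields $\kappa>0$ and a $\Ck^1$-curve $\gewicht\mapsto f_\gewicht\in\Wkp^{2,p}(\M[\gewicht_0])$ with $f_{\gewicht_0}=0$ and $F(\gewicht,f_\gewicht)=0$ for all $\gewicht\in\interval{\gewicht_0-\kappa}{\gewicht_0+\kappa}$.

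Next I would build the extension $\Psi$. Define first $\widetilde\Psi(\gewicht,p):=\outexp_{\Phi(\gewicht_0,p)}(f_\gewicht(\Phi(\gewicht_0,p))\,\nu[\gewicht_0])$ for $\gewicht$ near $\gewicht_0$; by construction the image $\M<\Hradius>[\gewicht]:=\widetilde\Psi(\gewicht,\sphere^2)$ has $\gewicht$-weighted expansion $\equiv\nicefrac{-2}\Hradius$, giving~\ref{I_CwER}. To enforce the orthogonality condition~\ref{I_psi_C1}, I would decompose $\partial*_\gewicht\widetilde\Psi = \rnu<\Hradius>[\gewicht]\,\nu<\Hradius>[\gewicht] + \rbeta<\Hradius>[\gewicht]$ and replace $\widetilde\Psi$ by $\Psi(\gewicht,p):=\widetilde\Psi(\gewicht,\varphi_\gewicht(p))$, where $\varphi_\gewicht:\sphere^2\to\sphere^2$ is the flow of the pulled-back tangential vector field $-\rbeta<\Hradius>[\gewicht]$. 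This is a standard reparametrization that produces purely normal motion while preserving the image and hence \ref{I_CwER}, and gives a $\Ck^1$-deformation in the sense of assumption~\ref{I_psi_C1}. On the overlap with $I$, the $\Wkp^{2,p}$-uniqueness part of the implicit function theorem forces $\Psi(\gewicht,\sphere^2)=\M<\Hradius>[\gewicht]=\Phi(\gewicht,\sphere^2)$; since both $\Psi$ and $\Phi$ are orthogonal deformations through the common sphere at $\gewicht=\gewicht_0$ (or at a nearby matching time), solving the ODE $\partial*_\gewicht\Phi=\rnu<\Hradius>\,\nu<\Hradius>$ backwards shows that $\Psi$ and $\Phi$ agree as maps on the overlap, so they glue to a $\Ck^1$-map on $I\cup\interval{\gewicht_0-\kappa}{\gewicht_0+\kappa}$.

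For uniqueness, let $\Phi$ and $\Phi'$ be two maps satisfying \ref{I_psi_C1}--\ref{I_maximal} with $\Phi(0,\cdot)=\Phi'(0,\cdot)$, and set $\intervalJ:=\{\gewicht\in\intervalI : \Phi(\gewicht,\cdot)=\Phi'(\gewicht,\cdot)\}$. The set $\intervalJ$ is non-empty since $0\in\intervalJ$, and it is closed in $\intervalI$ by the continuous dependence in~\ref{I_psi_C1}. To see it is open, pick $\gewicht_0\in\intervalJ$; the local uniqueness statement of the implicit function theorem (applied as in the first paragraph, based at the common surface $\Phi(\gewicht_0,\sphere^2)=\Phi'(\gewicht_0,\sphere^2)$ which lies in $\mathcal A^{\ve,\ve}$ by \ref{J_assumptions} so that Proposition~\ref{stability_operator_invertible} applies) forces the normal-graph functions describing $\Phi'(\gewicht,\sphere^2)$ and $\Phi(\gewicht,\sphere^2)$ to coincide, and the orthogonality condition then propagates the equality of the parametrizations. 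Hence $\intervalJ=\intervalI$.

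The main obstacle is the bookkeeping in the second paragraph: one has to ensure that the tangential reparametrization $\varphi_\gewicht$ is $\Ck^1$ in $\gewicht$ at the full regularity level required by \ref{I_psi_C1}, and that on the overlap with $I$ the reparametrized $\Psi$ really matches $\Phi$ as a map and not merely as a family of surfaces. Both points are standard once one uses that \ref{I_psi_C1} together with~\ref{I_CwER} leave exactly the tangential freedom that is fixed by orthogonality, so that two orthogonal $\Ck^1$-foliations through the same initial hypersurface and the same leaves must coincide.
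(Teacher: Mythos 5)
Your proposal is correct and follows essentially the same route as the paper: apply the implicit function theorem to the weighted expansion map $(\gewicht,f)\mapsto(\textbf H+\gewicht\,\textbf P)^{\nu}(f)$, identify its partial Fr\'echet derivative at $(\gewicht_0,0)$ with the pseudo stability operator $\jacobipm{\gewicht_0}[\gewicht_0]$, and use its invertibility from Proposition~\ref{stability_operator_invertible} to obtain the local $\Ck^1$-extension together with local uniqueness, then propagate uniqueness along $\intervalI$. Your extra bookkeeping (the tangential reparametrization enforcing orthogonality and the explicit open--closed argument for uniqueness) merely spells out steps the paper's proof leaves implicit.
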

\begin{proof}
Let $\gewicht_0\in\intervalI$ with $\vert\gewicht_0\vert<1$ and $p>2$ be arbitrary and suppress the index $\gewicht_0$. The operator $\jacobipm{\gewicht_0}[\gewicht_0]:\Wkp^{2,p}(\M)\to\Lp^p(\M)$ is the Fr\'echet derivative of
\[
 (\textbf H + \boldsymbol\cdot\,\textbf P)^{\nu}
	: \interval*-1*1 \times \Wkp^{2,p}(\M) \to \Lp^p(\M)
	: (\gewicht,f) \mapsto \tensor{(\textbf H + \gewicht\,\textbf P)^{\nu}}[\gewicht_0](f)
\]
with respect to the second component in $(\gewicht_0,0)$. In particular, this linearization is well-defined and invertible due to Proposition~\ref{stability_operator_invertible}. By the implicit function theorem, there is a constant $\kappa>0$ and a $\mathcal C^1$-map $\gamma:\interval{\gewicht_0-\kappa}{\gewicht_0+\kappa}\to\Wkp^{2,p}(\M)$ such that $(\textbf H + \boldsymbol\cdot\,\textbf P)^{\nu}(\gewicht,\gamma(\gewicht))\equiv(\textbf H + \boldsymbol\cdot\,\textbf P)^{\nu}(\gewicht_0,0)$ for any $\gewicht\in\interval{\gewicht_0-\kappa}{\gewicht_0+\kappa}$ and this map is unique within a neighborhood of $0\in\Wkp^{2,p}(\M)$. This implies that $\Phi$ can be $\Ck^1$-extended to $\intervalI\cup\interval{\gewicht_0-\kappa}{\gewicht_0+\kappa}$ and is uniquely defined by $\Phi[\gewicht_0]$. In particular, we conclude by the continuity of $\Phi$ that $\Phi$ is uniquely defined on $\intervalI$ by $\Phi(0,\cdot)$.\pagebreak[3]
\end{proof}

Thus, $\intervalI$ is open if the extension $\Psi$ of $\Phi$ satisfies the regularity assumption~\ref{J_assumptions} of Notation~\ref{intervalI}. Hence, it is sufficient to prove estimates for the derivatives of $\min_{\M[\gewicht\,]}\rad$ and $\volume{\M[\gewicht\,]}$ in order to conclude that $\intervalI$ is open. We will control the second of these derivatives by proving $\psi$ is (in highest order) a pure shift and the first of these derivative by sufficiently bounding the derivative of this shift.
\begin{lemma}[Decay properties of \texorpdfstring{$\rnu$ and $\rnu^\bot$}{the translation- and deformation-part of the lapse function}]\label{decay_rnu_full}
For every $p\in\interval*2\infty$, there are constants $\Hradius_0=\Cof{\Hradius_0}[\mass][\outve][\oc][\c]$, ${\outck}_0=\Cof{{\outck}_0}[\mass][\outve][\oc]>0$, $\ccenterz_0=\Cof{\ccenterz_0}[\mass][\outve][\oc][\c]>0$, and $C=\Cof[\mass][\outve][\oc][\c][p]$ with ${\outck}_0>0$, $\ccenterz_0>0$ and $\Hradius_0<0$ such that
\begin{equation*}
 \Vert \rnu\Vert_{\Wkp^{2,p}(\M)} \le D\,\Hradius^{1+\frac2p}, \qquad
 \Vert \rnu^\bot\Vert_{\Wkp^{2,p}(\M)} \le C\,\Hradius^{\frac12-\ve+\frac2p} \labeleq{decay_rnu}
\end{equation*}
if $\ccenterz\le\ccenterz_0$, $\outck\le{\outck}_0$, and $\Hradius>\Hradius_0$, where $\rnu:=\outg*(\nu,\partial*_\gewicht\Psi)$ denotes the lapse function of $\Psi$ \normalbrace{see Lemma~\ref{Psi_extendable}} and $D:=C(\outck+\ccenterz+\Hradius^{{-}\outve})$, where the index $\gewicht\in\intervalI$ was suppressed. Furthermore, $\Phi$ is in this setting continuously differentiable as map from $\intervalI\times\sphere^2$ to $\outM$. \pagebreak[1]
\end{lemma}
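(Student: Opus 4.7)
The plan is to derive an elliptic equation for the lapse $\rnu$ of the extension $\Psi$ and invert it using Proposition~\ref{stability_operator_invertible}. Since $\Psi$ is, by construction in Lemma~\ref{Psi_extendable}, an orthogonal deformation, $\del_\gewicht\Psi=\rnu\,\nu$ has no tangential part. Differentiating the defining identity $\H+\gewicht\,\troutzFund\equiv\nicefrac{-2}\Hradius$ in~$\gewicht$ and using that, by Corollary~\ref{Stability_operators}, the pseudo stability operator $\jacobipm*$ is precisely the Fr\'echet derivative of $f\mapsto(\H+\gewicht\,\troutzFund)(\graphnu\nu f)$ at $f\equiv0$, I obtain the key equation
\[ \jacobipm*\rnu \,=\, -\troutzFund \quad\text{on } \M. \]

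With this in hand, I decompose $\rnu=\trans\rnu+\rnu^\bot$ as in Proposition~\ref{stability_operator_invertible}. For the deformation part, the non-degeneracy bound \eqref{stability_operator_eigenvalue_bot} together with the identity $\jacobipm*\rnu^\bot=-\troutzFund-\jacobipm*\trans\rnu$ gives
\[ \Vert\rnu^\bot\Vert_{\Lp^2(\M)} \le \Hradius^2\bigl(\Vert\troutzFund\Vert_{\Lp^2(\M)}+\Vert\jacobipm*\trans\rnu\Vert_{\Lp^2(\M)}\bigr), \]
where $\Vert\troutzFund\Vert_{\Lp^2(\M)}\le\nicefrac C\Hradius$ by the $\Ck^1_2$-decay of $\outzFund$ together with $\rad\ge c\Hradius$ on~$\M$ (valid since $\ccenterz$ is small), and $\jacobipm*\trans\rnu$ is controlled through the near-annihilation of $\eflap_1,\eflap_2,\eflap_3$ by the leading symbol $\laplace+\nicefrac2{\Hradius^2}$ of $\jacobipm*$ combined with the smallness of its lower-order terms. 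For the translative part I test the equation against each $\eflap_i$ ($i=1,2,3$) and apply \eqref{stability_operator_eigenvalue_T}: the left-hand side reduces, up to errors of size $\nicefrac{D}{\Hradius^3}\Vert\trans\rnu\Vert_{\Lp^2(\M)}$, to $\nicefrac{6\mHaw}{\Hradius^3}\,a_i$ where $\trans\rnu=\sum_i a_i\,\eflap_i$, while the right-hand side $-\int_\M\troutzFund\,\eflap_i\d\mug$ is of order $\nicefrac D\Hradius$ after transplanting the integral assumption \eqref{assumptions_Theorem_2} from the Euclidean sphere to~$\M$ via Proposition~\ref{Regularity_of_surfaces_in_asymptotically_flat_spaces} (exactly as in the proof of Proposition~\ref{stability_operator_invertible}). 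Since $\mHaw\to\mass\neq0$, this gives $\vert a_i\vert\le CD\Hradius^2$, and combining the two bounds with a standard $\Wkp^{2,p}$-elliptic regularity bootstrap for $\jacobipm*$ on the nearly round sphere~$\M$ yields the two stated estimates.

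The main obstacle is the careful extraction of the small factor $D=C(\outck+\ccenterz+\Hradius^{-\outve})$ in the translative bound: this crucially uses both the approximate-eigenvalue identity \eqref{stability_operator_eigenvalue_T} and the smallness of $\int_\M\troutzFund\,\eflap_i\d\mug$ coming from the last inequality of \eqref{assumptions_Theorem_2}; without them, one only obtains $\vert a_i\vert\le C\Hradius^2$ and cannot re-verify the regularity assumption~\ref{J_assumptions} of Notation~\ref{intervalI} needed to close the open-closed argument. The stated joint continuous differentiability of $\Phi:\intervalI\times\sphere^2\to\outM$ then follows by combining the $\gewicht$-regularity from Lemma~\ref{Psi_extendable} with the Sobolev embedding $\Wkp^{2,p}(\sphere^2)\hookrightarrow\Ck^1(\sphere^2)$ for $p>2$.
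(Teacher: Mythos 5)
Your argument is correct and is essentially the paper's own proof: the same key identity $\jacobipm*\rnu=-\troutzFund$ obtained by differentiating the constant $\gewicht$-weighted expansion in $\gewicht$, the same splitting $\rnu=\trans{\rnu}+\rnu^\bot$ handled via the invertibility estimates of Proposition~\ref{stability_operator_invertible}, the same transplanted integral conditions \eqref{assumptions_Theorem_1}, \eqref{assumptions_Theorem_2} (with the smallness of $\int\troutzFund\,\eflap_i\d\mug$ giving the translative bound), and the same conclusion of the $\Ck^1$-regularity of $\Phi$ on $\intervalI\times\sphere^2$ from $\partial*_\gewicht\Phi=\rnu\,\nu$. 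The only step you leave implicit is the cross term $\int\jacobipm*(\rnu^\bot)\,\eflap_i\d\mug$ that appears when the equation is tested against $\eflap_i$ — it is not covered by \eqref{stability_operator_eigenvalue_T} (the paper treats the corresponding contribution by passing to the self-adjoint operator $\jacobiext*$ via \eqref{jacobipm} and the antisymmetric combination $\outzFund*(\nu,X_i\,\eflap_j-X_j\,\eflap_i)$), but it is harmless: after integration by parts the $\laplace\eflap_i$-part vanishes because $\int\rnu^\bot\eflap_i\d\mug=0$, and the remaining lower-order terms are controlled by your bound on $\Vert\rnu^\bot\Vert_{\Wkp^{2,2}(\M)}$ and absorbed in the end exactly like the $D\,\Vert\trans{\rnu}\Vert_{\Lp^2(\M)}$-errors.
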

\begin{proof}
Per definition of $\rnu$ and $\Phi$, we know for any $\gewicht\in\intervalI$
\begin{equation*}
 0 \equiv \partial[\gewicht]@{(\H[\gewicht]+\gewicht\;\tr[\gewicht\,]\outzFund)\circ\Phi}
	= \jacobipm[\gewicht]\rnu[\gewicht] + \tr[\gewicht\,]\outzFund,
\end{equation*}
i.\,e.\ $\jacobipm\rnu={-}\tr[\gewicht\,]\outzFund$. This derivative is well-defined on $\intervalI$ by replacing $\Phi$ with its extension $\Psi$ (see Lemma~\ref{I_open}). We conclude
\begin{equation*}
 \Vert \rnu^\bot\Vert_{\Wkp^{2,p}(\M)}
	\le \Hradius^2(\Vert\jacobipm\rnu\Vert_{\Lp^p(\M)}+\Vert\jacobipm(\rnu^T)\Vert_{\Lp^p(\M)})
	\le C\Hradius^{\frac12-\ve+\frac2p} + \frac C{\Hradius^{\frac12+\outve}}\Vert\rnu^T\Vert_{\Lp^p(\M)} \labeleq{decay_rnu_bot_tmp}
\end{equation*}
due to Proposition~\ref{stability_operator_invertible}. Suppressing the index $\gewicht\in\intervalI$, we define
\[ \matrixA_{ij} := \int\jacobipm \eflap_i\;\eflap_j\d\mug \qquad\forall\,i,j\in\{1,2,3\}, \]
where $\{\eflap_i\}_{i\in\N}$ is a complete orthonormal system of $\Lp^2(\M)$ by eigenfunctions of the (negative) Laplace operator with corresponding eigenvalues $\ewlap_i$ ($\ewlap_i\le\ewlap_{i+1}$). We recall that by Proposition~\ref{stability_operator_invertible}
\begin{equation*} \vert\matrixA[\gewicht]_{\oi\oj} + \frac{6m}{\Hradius^3}\eukoutg_{\oi\oj}\vert \le \frac D{\Hradius^3}. \labeleq{estimate_on_A} \end{equation*}
Thus, \eqref{decay_rnu_bot_tmp} implies 
\begin{align*}
 \vert\int\rnu f_\oi\d\mug - \frac{\Hradius^3}{6\mHaw} \int\rnu\,\jacobiext*f_\oi\d\mug\vert
	\le{}& C\,\Vert\rnu^\bot\Vert_{\Lp^2(\M)} + D\,\Vert\rnu^T\Vert_{\Lp^2(\M)} \\
	\le{}& \Hradius^{1-\outve} + D\,\Vert\rnu^T\Vert_{\Lp^2(\M)}
\end{align*}
On the other hand, the identity \eqref{jacobipm} for $\jacobipm*$ and $\jacobipm*\rnu=\troutzFund$ lead to
\[ \int\rnu\;\jacobiext*f_\oi\d\mug
	= {-}\int\troutzFund\,f_\oi\d\mug + 2\int\outzFund*(\nu,\levi*f_\oi)\rnu-\outzFund*(\nu,\levi*\rnu)f_\oi \d\mug.
\]
Taking all together, the (asymptotic) characterization \eqref{stability_operator_eigenvalue__levi_eflap_i} of $\levi*f_\oi$ and $\levi*\trans{\rnu}$ implies
\begin{align*}\hspace{4em}&\hspace{-4em}
 \vert \int\rnu\,f_\oi\d\mug + \frac{\Hradius^3}{6\mHaw}\int\troutzFund\,f_\oi\d\mug - \frac{\Hradius^2}{3m}\sum_{j=1}^3\int\outzFund*(\nu,X_i\,\eflap_j-X_j\,\eflap_i)\d\mug\,\int \rnu\,\eflap_j\d\mug\vert \\
	\le{}& \Hradius^{1-\outve} + D\,\Vert\rnu^T\Vert_{\Lp^2(\M)}.
\end{align*}
Thus, \eqref{decay_rnu} is satisfied due to the first inequality in (\ref{assumptions_Theorem_1_f}) and the third inequality in (\ref{assumptions_Theorem_2_f}).
Finally, we see that $\partial*_\gewicht\Phi=\rnu\,\nu$ and the above regularity of $\rnu$ imply that $\Phi$ is continuously differentiable as map from $\intervalI\times\sphere^2$ to $\outM$. \pagebreak[3]
\end{proof}

As explained above, we can now control the derivatives of the minimal distance from the origin $\min_{\M[\gewicht\,]}\rad$ and the area $\volume{\M[\gewicht\,]}$.
\begin{lemma}[\texorpdfstring{$\gewicht$}b-derivatives of \texorpdfstring{$\min_{\M[\gewicht\,]}\rad$ and $\volume{\M[\gewicht\,]}$}{some quantities}]\label{Derivative_estimates}
There are constants ${{\outck}_0}=\Cof{{\outck}_0}[\mass][\outve][\oc]>0$, ${\ccenterz_0}={\Cof{\ccenterz_0}[\mass][\outve][\oc][\c]}>0$, ${\Hradius_0}={\Cof{\Hradius_0}[\mass][\outve][\oc][\c][\ccenterz]}$, and $C=\Cof[\mass][\outve][\oc][\c]$ such that
\[
 \vert\partial[\gewicht]@{(\rad\circ\varphi)}\vert \le D\,\Hradius, \qquad
 \partial[\gewicht]@{\volume{\M[\gewicht\,]}} \le C\,\Hradius^{\frac32-\outve}
\]
if $\ccenterz\le{\ccenterz_0}$, $\outck\le{\outck}_0$, $\Hradius>\Hradius_0$, and $\gewicht\in\intervalI$, where $D:=C(\outck+\ccenterz+\Hradius^{{-}\outve})$.\pagebreak[1]
\end{lemma}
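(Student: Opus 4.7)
The plan is to combine the first variation formulas for $\rad$ and for area with the bounds on the lapse function $\rnu$ supplied by Lemma~\ref{decay_rnu_full}, using crucially the splitting $\rnu=\rnu^T+\rnu^\bot$ introduced there.

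For the radial derivative, since $\partial[\gewicht]@{\Phi}=\rnu\,\nu$ is a purely normal velocity and $|D\rad|_{\euce}\le1$, we have pointwise $|\partial[\gewicht]@{(\rad\circ\Phi)}|\le|\rnu|$. The Sobolev embedding $\Wkp^{2,p}(\M)\hookrightarrow\Lp^\infty(\M)$ for some $p\in\interval*2\infty$, rescaled to account for the area radius $\Aradius\sim\Hradius$, gives
\[ \Vert\rnu\Vert_{\Lp^\infty(\M)}\le C\,\Hradius^{-\frac2p}\,\Vert\rnu\Vert_{\Wkp^{2,p}(\M)}. \]
Combined with $\Vert\rnu\Vert_{\Wkp^{2,p}(\M)}\le D\,\Hradius^{1+\frac2p}$ from Lemma~\ref{decay_rnu_full}, this yields the first claimed estimate.

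For the area derivative, I would start from the first variation identity $\partial[\gewicht]@{\volume{\M[\gewicht\,]}}=\int\H\,\rnu\d\mug$ (up to a sign depending on convention). Substituting $\H\equiv-\nicefrac{2}\Hradius-\gewicht\,\troutzFund$ splits this into
\[ \partial[\gewicht]@{\volume{\M[\gewicht\,]}}=-\frac{2}\Hradius\int\rnu\d\mug-\gewicht\int\troutzFund\,\rnu\d\mug. \]
For the first term I would use that $\rnu^T$ is a combination of Laplace eigenfunctions with eigenvalues close to $\nicefrac{2}{\Hradius^2}$, hence $\Lp^2(\M)$-orthogonal to constants, so $\int\rnu^T\d\mug=0$. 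Cauchy--Schwarz together with $\Vert\rnu^\bot\Vert_{\Lp^2(\M)}\le C\,\Hradius^{3/2-\ve}$ from Lemma~\ref{decay_rnu_full} then yields
\[ \Bigl|\int\rnu\d\mug\Bigr|=\Bigl|\int\rnu^\bot\d\mug\Bigr|\le\volume{\M}^{1/2}\,\Vert\rnu^\bot\Vert_{\Lp^2(\M)}\le C\,\Hradius^{5/2-\ve}, \]
so this term contributes at most $C\,\Hradius^{3/2-\ve}$.

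For the second term I again split $\rnu=\rnu^T+\rnu^\bot$. The $\rnu^\bot$-piece is controlled by Cauchy--Schwarz using $\Vert\troutzFund\Vert_{\Lp^2(\M)}\le C/\Hradius$ (from the pointwise bound $|\outzFund|\le C/\rad^2$). For $\rnu^T=\sum_{\oi=1}^3 a_\oi\,\eflap_\oi$, I would expand $\int\troutzFund\,\rnu^T\d\mug=\sum_\oi a_\oi\int\troutzFund\,\eflap_\oi\d\mug$, bound each of the three inner integrals by $D/\Hradius$ via \eqref{assumptions_Theorem_2_f}, and bound $|a_\oi|\le\Vert\rnu\Vert_{\Lp^2(\M)}\le D\,\Hradius^2$ using Lemma~\ref{decay_rnu_full}, yielding $D^2\,\Hradius\le C\,\Hradius^{3/2-\ve}$ once $\Hradius$ is large. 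The main obstacle is precisely this control of the $\rnu^T$-contribution to the second integral: the a priori estimate $\Vert\rnu^T\Vert_{\Lp^2(\M)}\lesssim\Hradius^2$ is far too weak on its own, and it is only the integral smallness \eqref{assumptions_Theorem_2} on $\int_{\sphere^2_\rradius(0)}\troutzFund\,\nicefrac{\outx^\oi}{\rradius}\d\mug$---transferred to $\M$ via Proposition~\ref{Regularity_of_surfaces_in_asymptotically_flat_spaces} and recorded in \eqref{assumptions_Theorem_2_f}---that makes the pairing $\int\troutzFund\,\eflap_\oi\d\mug$ small enough to close the estimate.
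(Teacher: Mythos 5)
Your proof is correct and follows essentially the same route as the paper: the first estimate comes from the $\Lp^\infty$-control of $\rnu$ via \eqref{decay_rnu} and the rescaled Sobolev embedding, and the second from the first variation of area, $\H\equiv\nicefrac{{-}2}\Hradius-\gewicht\,\troutzFund$, the orthogonality of $\rnu^T$ to constants, and the splitting $\rnu=\rnu^T+\rnu^\bot$. The only genuine difference is your treatment of the term $\int\troutzFund\,\rnu^T\d\mug$: you expand $\rnu^T$ in the eigenbasis and invoke the transferred integral assumptions (\ref{assumptions_Theorem_2_f}), whereas the paper simply applies Cauchy--Schwarz,
\[ \vert\int\troutzFund\,\rnu^T\d\mug\vert \le \Vert\troutzFund\Vert_{\Lp^2(\M)}\,\Vert\rnu^T\Vert_{\Lp^2(\M)} \le \frac C{\Hradius^{\frac12+\outve}}\,D\,\Hradius^2 \le C\,\Hradius^{\frac32-\outve}, \]
using only the pointwise decay of $\outzFund*$ on $\M$ and the bound $\Vert\rnu\Vert_{\Wkp^{2,2}(\M)}\le D\,\Hradius^{2}$ from Lemma~\ref{decay_rnu_full}; this already closes the estimate because the target for the area derivative is $C\,\Hradius^{\frac32-\outve}$ with $\frac32-\outve>1$, not $D\,\Hradius$. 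Consequently, your closing assertion that the integral smallness \eqref{assumptions_Theorem_2} is what makes this pairing small enough is not accurate for this lemma: those assumptions are indispensable upstream, in Proposition~\ref{stability_operator_invertible} and Lemma~\ref{decay_rnu_full}, where they produce the small factor $D$ in $\Vert\rnu\Vert_{\Wkp^{2,p}(\M)}\le D\,\Hradius^{1+\frac2p}$ (which is what gives the first inequality of the lemma its $D\,\Hradius$ form), but the area estimate itself needs nothing beyond the crude $\Lp^2$ bounds. Your version is valid; it merely spends more machinery than necessary on that step.
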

\begin{proof}
The first inequality holds due to the inequalities \eqref{decay_rnu} of $\rnu$. Further, it is well-known that $\partial*_\gewicht(\d\mug[\gewicht]) = {-}\H\rnu\d\mug$. In particular, the inequalities \eqref{decay_rnu} of $\rnu$ and $\rnu^\bot$ imply
\[ \vert\partial[\gewicht]@{\volume{\M[\gewicht\,]}}\vert = \vert\int\H\rnu\d\mug \vert
		\le C\Vert\rnu^\bot\Vert_{\Lp^2(\M)} + \frac C{\Hradius^{\frac12+\outve}}\Vert\rnu^T\Vert_{\Lp^2(\M)}
		\le C\Hradius^{\frac32-\outve}. \]
Thus, the second inequality holds, too.\pagebreak[3]
\end{proof}
Finally, we can prove that the interval $\intervalI$ is open in $\interval*0*1$.
\begin{lemma}[\texorpdfstring\intervalI I is open]\label{I_open}
Let $\ckappa>0$ be arbitrary. There are positive constants ${\outck}_0=\Cof{{\outck}_0}[\mass][\outve][\oc][\ckappa]$, $\ccenterz_0=\Cof{\ccenterz_0}[\mass][\outve][\oc]\le\ckappa$, $\c_0=\Cof{\c_0}[\mass][\outve][\oc]$, and $\Hradius_0'=\Cof{\Hradius_0'}[\mass][\outve][\oc][\ckappa]$ such that for any $\outck\le{\outck}_0$, $\ccenterz\in\interval*{\ccenterz_0}*\kappa$, $\c\ge\c_0$, and $\Hradius_0>\Hradius_0'$ the interval $\intervalI$ is open in $\interval*-1*1$. In particular, $\M[\gewicht]\in\mathcal A^{\ve,\ve}(\ccenterz_0,\c_0)$ for every $\gewicht\in\intervalI$. \pagebreak[1]
\end{lemma}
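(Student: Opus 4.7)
The plan is to fix an interior point $\gewicht_0\in\intervalI$ with $|\gewicht_0|<1$, extend $\Phi$ past $\gewicht_0$ by means of the implicit function argument of Lemma~\ref{Psi_extendable} to an extension $\Psi\colon(\intervalI\cup\interval{\gewicht_0-\kappa}{\gewicht_0+\kappa})\times\sphere^2\to\outM$ which already satisfies~\ref{I_psi_C1}--\ref{I_CwER}, and then verify that the regularity condition~\ref{J_assumptions} persists on the enlarged interval. Once this is done, the maximality clause~\ref{I_maximal} forces $\interval{\gewicht_0-\kappa}{\gewicht_0+\kappa}\subseteq\intervalI$, which is exactly openness at~$\gewicht_0$.

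To verify~\ref{J_assumptions}, I would aim to establish the slightly stronger statement that $\M[\gewicht]\in\mathcal A^{\ve,\ve}(\ccenterz_0,\c_0)$ for constants $\ccenterz_0\le\ccenterz$ and $\c_0\le\c$ depending only on $\mass$, $\outve$, and $\oc$. The starting point $\M[0]$ is the CMC leaf of Theorem~\ref{existence_of_CMC_leaf} and therefore lies in $\mathcal A^{\ve,\ve}(0,\c_0')$, so the task is to bound how much each of the three defining quantities of Definition~\ref{Not_of-center} can change as $\gewicht$ runs over $\intervalI$. The area radius is controlled by integrating $\partial_\gewicht\volume{\M[\gewicht]}\le C\Hradius^{3/2-\outve}$ from Lemma~\ref{Derivative_estimates}, giving $|\Aradius(\gewicht)-\Aradius(0)|\le C\Hradius^{1/2-\outve}$, which is negligible against $\Hradius$. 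The minimum coordinate radius is controlled by integrating $|\partial_\gewicht(\rad\circ\varphi)|\le D\Hradius$ over an interval of length at most~$2$, yielding $\min_{\M[\gewicht]}\rad\ge\Hradius(1-2D)$, which dominates the required power of $\Aradius$ once $D$ is small. The Willmore-type integral $\int_{\M[\gewicht]}\H^2\d\mug-16\pi$ is reduced by Gau{\ss}--Bonnet combined with the Gau{\ss}\ equation to $\Vert\zFundtrf\Vert_{\Lp^2(\M[\gewicht])}^2$ plus a small ambient-curvature term, both of which are bounded by Proposition~\ref{Regularity_of_surfaces_in_asymptotically_flat_spaces}.

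The delicate quantity is the Euclidean coordinate center $\centerz(\gewicht)$, whose evolution is driven by $\partial_\gewicht\Psi=\rnu\,\nu$. In leading order only the translative part $\rnu^T$ of the lapse actually shifts the center, and Lemma~\ref{decay_rnu_full} supplies the crucial bound $\Vert\rnu^T\Vert_{\Wkp^{2,p}(\M)}\le D\Hradius^{1+2/p}$ with $D=C(\outck+\ccenterz+\Hradius^{-\outve})$. Integrating along $\intervalI$ and using that $\M[0]$ is exactly centered at the CMC origin then yields $|\centerz(\gewicht)|\le CD\Hradius\le\ccenterz_0\Aradius+\c_0\Aradius^{1-\outve}$ provided that $\outck_0$ and $\ccenterz_0$ are chosen small enough and $\Hradius_0'$ large enough.

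The main obstacle I expect is closing up the constants self-consistently. The estimates in Lemmas~\ref{decay_rnu_full} and~\ref{Derivative_estimates} \emph{presuppose} $\M[\gewicht]\in\mathcal A^{\ve,\ve}(\ccenterz,\c)$, while the conclusion I need is the \emph{strictly smaller} class $\mathcal A^{\ve,\ve}(\ccenterz_0,\c_0)$ with $\ccenterz_0\le\ccenterz$ and $\c_0\le\c$. The resolution is a staggered choice: first fix $\ccenterz_0$ and $\c_0$ as functions of $\mass$, $\outve$, $\oc$ only; then select $\ccenterz\ge\ccenterz_0$ and $\c\ge\c_0$ generously enough to absorb the intermediate estimates (so that the output bounds are strictly better than $\ccenterz$, $\c$); and finally shrink $\outck_0$ and enlarge $\Hradius_0'$ so that the smallness parameter $D$ is uniformly small along the whole interval, thereby propagating the tighter conclusion and completing the bootstrap.
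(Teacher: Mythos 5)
Your overall skeleton is the paper's: extend $\Phi$ past any interior point of $\intervalI$ via Lemma~\ref{Psi_extendable}, show that the class condition~\ref{J_assumptions} of Notation~\ref{intervalI} persists with uniform, in fact improved, constants, and then invoke the maximality clause~\ref{I_maximal}; your treatment of the area radius, of $\min_{\M[\gewicht\,]}\rad$ and of the Willmore deficit is also in line with the paper, which notes that by Proposition~\ref{Regularity_of_surfaces_in_asymptotically_flat_spaces} and Lemma~\ref{Derivative_estimates} these are controlled once the minimal coordinate radius is. The gap sits exactly at the point you flag as delicate, and your \lq staggered choice\rq\ does not close it. The bound you integrate from Lemma~\ref{decay_rnu_full} carries the factor $D=C(\outck+\ccenterz+\Hradius^{-\outve})$ in which $\ccenterz$ is the \emph{fixed class constant} of Notation~\ref{intervalI}. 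Integrating over an interval of length at most $2$ therefore gives, for the Euclidean coordinate center of $\M[\gewicht\,]$, only $\vert\centerz\,\vert\le C(\outck+\ccenterz+\Hradius^{-\outve})\,\Hradius$, and since $\Aradius$ is comparable to $\Hradius$, landing in $\mathcal A^{\ve,\ve}(\ccenterz_0,\c_0)$ with $\ccenterz_0\le\ccenterz$ would require in leading order $C\,\ccenterz\le\ccenterz_0\le\ccenterz$. As $C$ is a large constant depending on $\mass$, $\outve$, $\oc$, no choice of parameters helps: enlarging $\ccenterz$ \lq generously\rq\ enlarges the output bound at the same linear rate; shrinking $\ccenterz_0$ makes the target inequality harder, not easier; and shrinking $\outck_0$ or enlarging $\Hradius_0'$ cannot make $D$ small, because $D\ge C\,\ccenterz\ge C\,\ccenterz_0>0$ once $\ccenterz_0$ is fixed. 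So the bootstrap, as you set it up, does not terminate -- not even for the weaker goal of staying in $\mathcal A^{\ve,\ve}(\ccenterz,\c)$ itself.

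The paper closes the loop by working with the \emph{instantaneous} constants instead: let ${\ccenterz[\gewicht]}$ and $\c[\gewicht]$ denote constants with $\M[\gewicht\,]\in\mathcal A^{\ve,\ve}(\ccenterz[\gewicht],\c[\gewicht])$; since Lemmas~\ref{decay_rnu_full} and \ref{Derivative_estimates} only use the defining inequalities of that class, they may be applied with these constants, which turns the center estimate into the differential inequality $\vert\partial[\gewicht]@{\hspace{.05em}\ccenterz[\gewicht]}\vert \le C(\ccenterz[\gewicht] + \Hradius^{-\ve} + \outck)$. A Gronwall argument on an interval of length at most $2$, together with $\ccenterz[0\hspace{.05em}]=0$ for the CMC leaf from Theorem~\ref{existence_of_CMC_leaf}, then yields $\ccenterz[\gewicht]\le C(\outck+\Hradius^{-\outve})$, a bound \emph{independent of the class constant} $\ccenterz$: the linear feedback in $\ccenterz[\gewicht]$ only produces a fixed exponential factor multiplying the small inhomogeneity $\outck+\Hradius^{-\outve}$, rather than a term proportional to $\ccenterz$. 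With this, one fixes $\ccenterz_0$ and $\c_0$ depending only on $\mass$, $\outve$, $\oc$, then shrinks $\outck_0$ and enlarges $\Hradius_0'$, exactly as the statement requires; the improved bound holds uniformly for all $\gewicht$ for which the extension $\Psi$ is defined, so maximality gives $\Phi=\Psi$ and Lemma~\ref{Psi_extendable} yields openness. If you recast your integration of the lapse bound in this ODE/Gronwall form, the remainder of your argument goes through.
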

\begin{proof}
By Proposition~\ref{Regularity_of_surfaces_in_asymptotically_flat_spaces} and Lemma~\ref{Derivative_estimates}, the estimates on ${\zFundtrf[\gewicht\,]}$ and $\volume{\M[\gewicht\,]}$ only depend on $\min_{\M[\gewicht\,]}\rad$. Let ${\ccenterz[\gewicht]}$ and $\c[\gewicht]$ denote constants such that $\M[\gewicht\,]\in\mathcal A^{\ve,\ve}(\ccenterz[\gewicht],\c[\gewicht])$. By Lemma~\ref{Derivative_estimates}, we can assume
\[ \vert\partial[\gewicht]@{\hspace{.05em}\ccenterz[\gewicht]}\vert \le C(\ccenterz[\gewicht] + \Hradius^{-\ve} + \outck), \]
i.\,e.
\[ \vert\ccenterz[\gewicht]\vert \le C(\outck+\Hradius^{-\ve}+\ccenterz[0]). \]
As $\ccenterz[0\hspace{.05em}]=0$ due to Theorem~\ref{existence_of_CMC_leaf}\footnote{If we look at the alternative assumptions mentioned between the Definitions~\ref{Ck_asymptotically_flat_foliation} and \ref{Not_of-center}, we can choose $\ccenterz[0\hspace{.05em}]>0$ arbitrary small (depending on $\Hradius_0$ but not on $\Hradius>\Hradius_0$). This is also sufficient for this argument.}, we can therefore assume $\ccenterz[\gewicht]\le\ccenterz\le C(\outck+\Hradius^{-\ve})\le\kappa$ if $\outck$ is sufficiently small and $\Hradius$ is sufficiently large. Furthermore, we directly see that $\c[\gewicht]$ can equally be uniformly bounded by some constant $\c_0$. All in all, we get $\M[\gewicht\,]\in\mathcal A^{\ve,\ve}(\ccenterz,\c_0)$ for every $\gewicht$ for which $\Psi(\gewicht,\cdot)$ is well-defined, where $\outck$ and $\c$ do not depend on $\sup\{\vert\gewicht\vert:\gewicht\in\intervalJ\}$. The maximality of $\intervalI$ therefore implies $\Phi=\Psi$ and thus Lemma~\ref{Psi_extendable} ensure that $\intervalI$ is open in $\interval*-1*1$.
\end{proof}
From now on, we assume that $\ccenterz=\ccenterz_0$, $\c=\c_0$, and $\Hradius_0=\Hradius_0'$, where we use the notation of Lemma~\ref{I_open}.

\begin{lemma}[\texorpdfstring\intervalI I is closed]\label{I_closed}
The interval $\intervalI$ is closed in $\interval*-1*1$.\pagebreak[1]
\end{lemma}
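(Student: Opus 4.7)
The plan is to take any sequence $(\gewicht_n)_n\subseteq\intervalI$ converging to some $\gewicht_\infty\in\interval*-1*1$, extract a limit surface $\M<\Hradius>[\gewicht_\infty]$ of the $\M<\Hradius>[\gewicht_n]$, and verify that the extended family of deformations still satisfies the defining properties~\ref{I_psi_C1}--\ref{J_assumptions} of Notation~\ref{intervalI}; the maximality axiom~\ref{I_maximal} will then force $\gewicht_\infty\in\intervalI$ and hence show that $\intervalI$ is closed in $\interval*-1*1$. Since $\intervalI$ is an interval, the segment joining $\gewicht_n$ and $\gewicht_\infty$ is eventually contained in $\intervalI$, so $\Phi<\Hradius>$ is indeed defined on a neighbourhood of each $\gewicht_n$ inside $\intervalI$.

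First, I would collect uniform estimates along the sequence. Lemma~\ref{I_open} gives $\M<\Hradius>[\gewicht_n]\in\mathcal A^{\ve,\ve}(\ccenterz_0,\c_0)$ with constants independent of $n$, so Proposition~\ref{Regularity_of_surfaces_in_asymptotically_flat_spaces} represents every leaf as a smooth graph $f_n$ over a Euclidean sphere $\sphere^2_\Hradius(\centerz_n)$ with centers and $\Wkp^{2,\infty}$-norms bounded uniformly in $n$. Simultaneously, Lemma~\ref{decay_rnu_full} provides the uniform bound $\Vert\rnu\Vert_{\Wkp^{2,p}(\M<\Hradius>[\gewicht_n])}\le C\,\Hradius^{1+\nicefrac2p}$ for any $p>2$, controlling the lapse of $\Phi<\Hradius>$ along the whole sequence.

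Next, I would pass to the limit. Because $\partial*_\gewicht\Phi<\Hradius>=\rnu\,\nu$ is uniformly bounded in $\Wkp^{2,p}(\sphere^2;\outM)$, the curve $\gewicht\mapsto\outx\circ\Phi<\Hradius>(\gewicht,\cdot)$ is Lipschitz continuous into the Banach space $\Wkp^{2,p}(\sphere^2;\R^3)$ and therefore extends continuously to $\gewicht_\infty$. Choosing $p>2$ so that $\Wkp^{2,p}\hookrightarrow\Ck^1$, this yields $\Ck^1$-convergence of the parametrizations and hence $\Lp^p$-convergence of the second fundamental forms $\zFund<\Hradius>[\gewicht_n]$ and of $\tr<\Hradius>[\gewicht_n]\outzFund$. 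The constant-expansion identity therefore passes to the limit, namely $\H<\Hradius>[\gewicht_\infty]+\gewicht_\infty\,\tr<\Hradius>[\gewicht_\infty]\outzFund\equiv\nicefrac{{-}2}\Hradius$ on $\M<\Hradius>[\gewicht_\infty]$, establishing~\ref{I_CwER}; elliptic regularity for this equation then promotes $f_\infty$ to a smooth graph.

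Finally, I would verify the remaining axioms. The integral and minimum-radius inequalities defining $\mathcal A^{\ve,\ve}(\ccenterz_0,\c_0)$ are continuous under $\Ck^2$-convergence, so~\ref{J_assumptions} is inherited by $\M<\Hradius>[\gewicht_\infty]$. The continuous dependence of the elliptic equation $\jacobipm[\gewicht]\rnu=-\tr[\gewicht]\outzFund$ on the parameter $\gewicht$, together with the uniform invertibility of $\jacobipm[\gewicht]$ from Proposition~\ref{stability_operator_invertible}, gives $\rnu[\gewicht_\infty]=\lim_n\rnu[\gewicht_n]$; this supplies the $\Ck^1$-regularity~\ref{I_psi_C1} of $\Phi<\Hradius>$ at $\gewicht_\infty$ and orthogonality via $\partial*_\gewicht\Phi<\Hradius>(\gewicht_\infty,\cdot):=\rnu[\gewicht_\infty]\,\nu<\Hradius>[\gewicht_\infty]$. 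The main obstacle will be precisely preserving the second-order geometric quantity $\H+\gewicht\,\troutzFund$ in the limit; this is exactly why Lemma~\ref{decay_rnu_full} was formulated with $\Wkp^{2,p}$- rather than merely $\Wkp^{1,p}$-control on the lapse, so that the parametrizations converge in $\Ck^1$ and hence the second fundamental forms converge in $\Ck^0$.
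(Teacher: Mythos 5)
Your overall strategy is the one the paper follows: use the uniform bounds of Lemma~\ref{decay_rnu_full} to extend $\Phi$ continuously up to the boundary point of $\intervalI$, check that the limit leaf still satisfies \ref{I_CwER} and \ref{J_assumptions}, upgrade to $\Ck^1$-dependence on $\gewicht$ via the invertibility of $\jacobipm$ and the implicit-function argument of Lemma~\ref{Psi_extendable}, and conclude by maximality~\ref{I_maximal}. The genuine gap is in how you pass the second-order data to the limit. Lemma~\ref{decay_rnu_full} bounds $\rnu$ in $\Wkp^{2,p}(\M)$, but $\partial*_\gewicht\Phi=\rnu\,\nu$ is bounded in $\Wkp^{2,p}$ only if the unit normal is, and two derivatives of $\nu$ involve first derivatives of $\zFund*$; the estimate you quote from Proposition~\ref{Regularity_of_surfaces_in_asymptotically_flat_spaces} controls $\zFundtrf$ only in $\Hk(\M)$, i.e.\ in $\Wkp^{1,2}(\M)$, not in the $\Lp^p$ with $p>2$ needed for $\Wkp^{2,p}\hookrightarrow\Ck^1$. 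So the claimed Lipschitz continuity of $\gewicht\mapsto\outx\circ\Phi(\gewicht,\cdot)$ with values in $\Wkp^{2,p}$ is not justified by what you cite; what you actually get is Lipschitz continuity with values in $\Ck^1(\sphere^2;\R^3)$, which is exactly what the paper records. From there your key inference fails: $\Ck^1$-convergence of the parametrizations does \emph{not} yield $\Lp^p$- (let alone $\Ck^0$-) convergence of the second fundamental forms, since these are built from second derivatives -- your closing sentence, deriving $\Ck^0$-convergence of $\zFund*$ from $\Ck^1$-convergence of the graphs, is a non sequitur, and your appeal to ``continuity under $\Ck^2$-convergence'' for \ref{J_assumptions} presupposes a convergence you never established. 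As written, neither the constant-expansion identity~\ref{I_CwER} nor the $\int\H^2\d\mug$-inequality in~\ref{J_assumptions} is secured at the endpoint.

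The paper closes precisely this hole by a different device: instead of differentiating the limit parametrization, it evolves the induced metric, $\partial*_\gewicht\,\g[\gewicht\,]*={-}2\,\rnu[\gewicht\,]\,\zFund[\gewicht]*$, and combines \eqref{decay_rnu} with the $\Wkp^{1,p}$-estimate on $\zFundtrf*$ (footnote to Proposition~\ref{Regularity_of_surfaces_in_asymptotically_flat_spaces}) to conclude that $\g[\gewicht\,]*$ is Lipschitz in $\gewicht$ with values in $\Wkp^{1,p}$; hence $\troutzFund$, $\zFund*$, and therefore $\H[\gewicht]$ depend continuously on $\gewicht$ up to the endpoint, so the CE-identity and the membership in $\mathcal A^{\ve,\ve}(\ccenterz,\c)$ survive in the limit (alternatively one may quote Allard's compactness theorem, as the paper remarks). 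Your argument is repairable along similar lines -- for instance by using the $\Wkp^{1,p}$-control of the full second fundamental form to make $\rnu\,\nu$ genuinely $\Wkp^{2,p}$-bounded, or by exploiting the uniform $\Wkp^{2,\infty}$-bounds on the graph functions you already collected together with weak-$*$ compactness and the fact that the expansion is linear in the second derivatives -- but some such input beyond $\Ck^1$-convergence is indispensable, and it is missing from the proposal.
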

\begin{proof}
By the regularity of the lapse function $\rnu$ due to \eqref{decay_rnu} and the regularity of the unit normal -- due to the definition of the second fundamental form and Proposition~\ref{Regularity_of_surfaces_in_asymptotically_flat_spaces} -- we conclude that $\outx\circ\Phi\in\Ck^{0,1}(\intervalI;\Ck^1(\sphere^2;\R^3))$, i.\,e.\ this map is Lipschitz continuous on $\intervalI$ with values in $\Ck^1(\sphere^2;\R^3):=\{(f_i)_{i=1}^3 : f_i\in\Ck^1(\sphere^2)\}$. Thus, we can extend $\Phi$ continuously to a map $\Psi$ on the closed interval $\intervalJ:=\text{closure}(\intervalI)$, i.\,e.\ $\outx\circ\Psi\in\Ck^{0,1}(\intervalJ;\Ck^1(\sphere^2;\R^3))$. Let us assume that $\M[\gewicht]:=\Phi(\gewicht,\sphere^2)\in\mathcal A^{\ve,\ve}(\ccenterz,\c)$ for every $\gewicht\in\intervalJ$ and the constants $\ccenterz$ and $\c$ from Lemma~\ref{I_open} -- we prove this later. Proposition~\ref{stability_operator_invertible} ensures in this case that the pseudo stability operator ${\jacobipm{\gewicht_0}[\gewicht_0]}$ on ${\M[\gewicht_0]}$ is invertible. The same argument as in Lemma~\ref{Psi_extendable} and the uniqueness of $\Phi$ (again due to Lemma~\ref{Psi_extendable}) ensures that $\Psi$ is in fact not only Lipschitz continuously but continuously differentiable, i.\,e.\ $\outx\circ\Ck^1(\intervalJ;\Ck^1(\sphere^2;\R^3))$. The maximality of $\intervalI$ ensures that $\Phi=\Psi$, thus $\intervalI=\text{closure}(\intervalI)$, i.\,e.\ $\intervalI$ is closed.

Left to prove is $\M[\gewicht]\in\mathcal A^{\ve,\ve}(\ccenterz,\c)$, where $\ccenterz$ and $\c$ are as in Lemma~\ref{I_open}. This is a direct implication of Allard's compactness theorem \cite{Allard}. However, we give a more elementary proof for the readers convenience: $\M[\gewicht]:=\Psi(\gewicht,\sphere^2)$ is a $\Ck^1$-submanifold of $\outM$ due to the continuity of $\Psi$ and therefore, the metric $\outg*$ induced a well-defined metric $\g[\gewicht\,]*$ on $\M[\gewicht]$. As we know that $\partial*_\gewicht\,\g[\gewicht\,]*={-}2\,\rnu[\gewicht\,]\,\zFund[\gewicht]*$, the estimates \eqref{decay_rnu} on $\rnu$ and the ones on $\zFund*$ from Proposition~\ref{Regularity_of_surfaces_in_asymptotically_flat_spaces} imply that $\g[\gewicht\,]*\in\Wkp^{1,p}(\sphere)$ depends Lipschitz-continuously on $\gewicht\in\intervalJ$ (for every $p\in\interval2\infty$). Here (and in the following), we suppress the pullback along $\Psi$. Thus, $\tr[\gewicht\,]\hspace{.05em}\outzFund*\in\Wkp^{1,p}(\sphere^2)$ and the second fundamental form $\zFund[\gewicht]*\in\Lp^2(\sphere^2)$ also depend continuously on $\gewicht\in\intervalJ$. This implies that $\H[\gewicht]\equiv\nicefrac{{-}2}\Hradius-\gewicht\,\tr[\gewicht\,]\outzFund*\in\Wkp^{1,p}(\sphere^2)$ does so, too. Hence, we get $\M[\gewicht]\in\mathcal A^{\ve,\ve}(\ccenterz,\c)$ for every $\gewicht\in\intervalJ$. By the above argument, this proves the lemma.
\pagebreak[3]
\end{proof}

\begin{proof}[Proof of the existence Theorem~\ref{Existence} -- without foliation property]
The interval $\intervalI$\linebreak[3] of all weights $\gewicht$ such that there exists a surface ${\M[\gewicht\,]}$ with constant $\gewicht$-weighted expansion ${\H[\gewicht]}+\gewicht\,{\tr[\gewicht\,]\hspace{.05em}}\outzFund*\equiv\nicefrac{{-}2}\Hradius$ is non-empty (Theorem~\ref{existence_of_CMC_leaf}), as well as open (Lemma~\ref{I_open}) and closed (Lemma~\ref{I_closed}) in and thus equal to $\interval*-1*1$. In particular, the surfaces ${\M[\pm]<\Hradius>}:={\M[\pm1]<\Hradius>}$ exist for every $\Hradius>\Hradius_0$.\pagebreak[3]
\end{proof}

\begin{proof}[Proof of the uniqueness Theorem~\ref{Uniqueness}]
Let $\M[\pm]$ be such a CE-surfaces -- where $\pm$ is a fixed sign. We define the interval $\intervalI\subseteq\interval*-1*1$ equally to the one in Notation~\ref{intervalI} replacing the assumption~\ref{I_differentiable} by \lq${\pm}1\in\intervalI$, $\Phi<\Hradius>({\pm}1,\cdot)$ is continuously differentiable, and $\Phi<\Hradius>({\pm}1,\sphere^2)=\M[\pm]$\rq. Repeating the arguments of Section~\ref{existence}, we conclude that $\intervalI$ is open and closed in $\interval*-1*1$. In particular, there is a CMC-surface $\M:=\M[0]$ satisfying the assumptions of Proposition~\ref{Regularity_of_surfaces_in_asymptotically_flat_spaces} and therefore the ones of Theorem~\ref{CMC_Uniqueness}. Thus, $\M$ is a leaf of the CMC-foliation. By the uniqueness of the deformation $\Phi$ due to Lemma~\ref{Psi_extendable}, we conclude that $\M[\pm]$ is the CE-sphere constructed in Theorem~\ref{Existence}.\pagebreak[3]
\end{proof}

To prove that the CE-surfaces foliate the space, we will need to control the (ADM-)\linebreak[1]linear momentum.
\begin{proposition}[Linear momentum is small]\label{linear_Momentum_small}
Let ${\outve}>0$ be a constant and $(\outM,\outg*,\outx,\outzFund*,\outmomden*,\outenden)$ be a $\mathcal C^2_{\frac12+\ve}$-asymptotically flat initial data set. If
\[ \oc_2 \ge \vert\,\int_{\sphere^2_\rradius(0)}\outzFund_{\oj\ok}\frac{\outx_\oi\,\outx^\oj\,\outx^\ok}{\rad^3} \d\mug\,\vert\qquad\forall\,\rradius>R_0 \]
holds for some constants $\oc_2>0$ and $R_0>0$, then 
\begin{equation*} \liminf_{\rradius\to\infty} \int_{\sphere^2_\rradius(0)}\tr\outzFund\,\frac{\outx_i}\rad \d\mug \le \impuls_i \le \limsup_{\rradius\to\infty} \int_{\sphere^2_\rradius(0)}\tr\outzFund\,\frac{\outx_i}\rad \d\mug \labeleq{linear_Momentum_small_ineq}\end{equation*}
and the implication
\[ \vert\int_{\sphere^2_\rradius(0)} \outH\,\frac{\outx_\oi}\rad \d\mug\vert \le \frac{\oc_2}{\rradius^\ve}
	\qquad\Longrightarrow\qquad
		\int_{\sphere^2_\rradius(0)}\tr\outzFund\,\frac{\outx_i}\rad \d\mug
			\xrightarrow{\rradius\to\infty} \impuls_i \]
holds for each $i\in\{1,2,3\}$. Here, $\impuls=(\impuls_1,\impuls_2,\impuls_3)\in\R^3$ denotes the \normalbrace{ADM}** linear momentum defined by
\[ 
	\impuls_\oi :\xleftarrow{\rradius\to\infty} \impuls<\rradius>_\oi := \frac1{8\pi}\sum_{\oj=1}^3\int_{\sphere^2_\rradius(0)} \momentum_{\oi\oj}\frac{\outx_\oj}\rradius\d\eukmug  \quad\text{with}\quad\momentum*:=\outH\,\outg*-\outzFund.
\]
\end{proposition}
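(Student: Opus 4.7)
The strategy is to reduce the surface integral $Q_i(\rradius):=\int_{\sphere^2_\rradius(0)}\tr\outzFund\,\frac{\outx_i}{\rradius}\,d\mug$ to the ADM-momentum flux integral $P_i(\rradius):=\sum_j\int_{\sphere^2_\rradius(0)}\momentum_{ij}\,\frac{\outx^j}{\rradius}\,d\eukmug=8\pi\,\impuls<\rradius>_i$ by an algebraic identity, and then to control the defect by combining the hypothesis with the momentum constraint $\outmomden=\outdiv\momentum$ and the Euclidean divergence theorem.

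The algebraic step rests on the two-trace identity $\tr\outzFund=\outH-\outzFund_{\nu\nu}$ on $\sphere^2_\rradius(0)$, where $\nu$ is the $\outg$-outer unit normal. Writing $\outzFund(\nu,\partial_i)=\nu_i\,\outzFund_{\nu\nu}+\outzFund(\nu,(\partial_i)^T)$ with the $\outg$-tangential projection $(\partial_i)^T$ and using $\outg_{ij}\nu^j=\nu_i$, one obtains the pointwise equality $\sum_j\momentum_{ij}\nu^j=\tr\outzFund\cdot\nu_i-\outzFund(\nu,(\partial_i)^T)$. Since $\nu-\frac{\outx}{\rradius}$, $d\mug-d\eukmug$, and $\outg_{ij}-\eukoutg_{ij}$ are all of order $\rad^{-\frac12-\outve}$, while every integrand carries at least one factor of $\outzFund$ or $\outH$ (of order $\rad^{-\frac32-\outve}$), these discrepancies integrate to $o(1)$ on $\sphere^2_\rradius(0)$, giving
\[
 Q_i(\rradius)=P_i(\rradius)+\int_{\sphere^2_\rradius(0)}\outzFund(\nu,(\partial_i)^T)\,d\eukmug+o(1).
\]

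Next I analyse the tangential integral. Pointwise $\outzFund(\nu,(\partial_i)^T)=\outzFund_{ij}\frac{\outx^j}{\rradius}-\frac{\outx_i\,\outx^j\,\outx^k}{\rad^3}\outzFund_{jk}$ modulo $o(1)$ corrections, and the second summand integrates to at most $\oc_2+o(1)$ in absolute value by hypothesis. For the first summand I apply the Euclidean divergence theorem on the annulus $\{R_0<|\outx|<\rradius\}$ to the vector field $V^j:=\outzFund_{ij}$. A direct expansion of $\outdiv\momentum$ in Euclidean coordinates, using $\outg-\eukoutg=O(\rad^{-\frac12-\outve})$, $\outlevi-\eukoutlevi=O(\rad^{-\frac32-\outve})$, $\outzFund=O(\rad^{-\frac32-\outve})$, together with the constraint $\outdiv\momentum_i=\outmomden_i$, gives $\partial^j\outzFund_{ij}=\partial_i\outH-\outmomden_i+O(\rad^{-3-2\outve})$, with both the remainder and $\outmomden_i$ absolutely integrable on $\R^3\setminus B_{R_0}$. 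Applying the divergence theorem also to the scalar $\outH$ one then recovers at infinity the momentum-constraint identity
\[
 \int_{\sphere^2_\rradius(0)}\outH\,\frac{\outx_i}{\rradius}\,d\eukmug-\int_{\sphere^2_\rradius(0)}\outzFund_{ij}\,\frac{\outx^j}{\rradius}\,d\eukmug\ \xrightarrow{\rradius\to\infty}\ 8\pi\,\impuls_i.
\]

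Assembling these ingredients, $Q_i(\rradius)-8\pi\,\impuls<\rradius>_i$ equals $\int_{\sphere^2_\rradius(0)}\outzFund_{ij}\frac{\outx^j}{\rradius}d\eukmug$ minus the $\oc_2$-controlled normal-normal integral, up to $o(1)$; and by the identity above the former differs from $\int_{\sphere^2_\rradius(0)}\outH\frac{\outx_i}{\rradius}d\eukmug-8\pi\,\impuls<\rradius>_i$ by $o(1)$. The liminf/limsup statement \eqref{linear_Momentum_small_ineq} follows (absorbing the constant $8\pi$ into the normalization of the ADM momentum used in the proposition). For the second assertion, the stronger hypothesis $|\int_{\sphere^2_\rradius(0)}\outH\frac{\outx_i}{\rradius}d\mug|\le\oc_2\,\rradius^{-\outve}$ forces $\int_{\sphere^2_\rradius(0)}\outzFund_{ij}\frac{\outx^j}{\rradius}d\eukmug\to-8\pi\,\impuls_i$ through the same identity, and combined with the hypothesis on the normal-normal integral this upgrades the inequality to the convergence $Q_i(\rradius)\to\impuls_i$. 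The main technical hurdle is the bookkeeping: each pointwise integrand decays only like $\rad^{-\frac32-\outve}$, so the individual integrals can a priori grow like $\rradius^{\frac12-\outve}$, and one must consistently distinguish error terms that gain an additional $\rad^{-\frac12-\outve}$ (integrating to $o(1)$ on a single sphere) from those that gain an additional $\rad^{-1-\outve}$ (absolutely integrable on $\R^3\setminus B_{R_0}$) before any of the above cancellations can be justified.
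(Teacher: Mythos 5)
There is a genuine gap, and it sits exactly where the work has to happen. Your algebraic reduction is circular: combining your decomposition $Q_i(\rradius)=P_i(\rradius)+\int_{\sphere^2_\rradius(0)}\outzFund_{ij}\frac{\outx^j}\rradius\d\eukmug-\int_{\sphere^2_\rradius(0)}\outzFund_{kl}\frac{\outx_i\,\outx^k\,\outx^l}{\rad^3}\d\eukmug+o(1)$ with the very definition of $P_i(\rradius)$ merely reproduces the pointwise trace identity $\troutzFund=\outH-\outzFund(\nu,\nu)$ integrated over the sphere; it contains no relation between $Q_i$ and $\impuls_i$. In the first assertion nothing controls $\int_{\sphere^2_\rradius(0)}\outH\,\frac{\outx_i}\rradius\d\eukmug$ (a priori it may grow like $\rradius^{\frac12-\outve}$), and your only use of the constraint is to show that the ADM limit exists. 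Boundedness of the normal--normal integral, existence of the ADM limit, and the trace identity are jointly consistent with, say, $Q_i(\rradius)\equiv\impuls_i+1$ for all large $\rradius$, so \eqref{linear_Momentum_small_ineq} cannot \lq follow\rq\ from them. The missing mechanism, which is the heart of the paper's argument, is to apply the Euclidean divergence theorem on the annuli between radii $\rradius$ and $\rradius'$ to the vector field whose flux is the hypothesis-controlled normal--normal integral (components $\outzFund_{kl}\,\outx^l\outx_i/\rad^2$, not your $V^j=\outzFund_{ij}$), and to use the constraint $\outmomden*=\outdiv(\outH\outg*-\outzFund*)$ inside the resulting bulk term; after the bookkeeping, the difference of two normal--normal fluxes equals, up to $C\rradius^{-\outve}$, the radially weighted integral $\int_\rradius^{\rradius'}\frac2r\bigl(\int_{\sphere^2_r(0)}\troutzFund\,\frac{\outx_i}r\d\eukmug-\impuls_i\bigr)\d r$, which by the hypothesis is bounded by $2\oc_2+C\rradius^{-\outve}$ uniformly in $\rradius'$. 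Since $\int^{\rradius'}\frac{\d r}r$ diverges, $Q_i(r)-\impuls_i$ cannot eventually keep a fixed sign while staying bounded away from zero, and that is precisely \eqref{linear_Momentum_small_ineq}. In your write-up the hypothesis never interacts with the constraint through a bulk integral, so this logarithmic-divergence argument has no counterpart.

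The second assertion suffers from the same problem in sharper form. From your identities, smallness of $\int_{\sphere^2_\rradius(0)}\outH\frac{\outx_i}\rradius\d\mug$ only yields $Q_i(\rradius)=-\int_{\sphere^2_\rradius(0)}\outzFund_{kl}\frac{\outx_i\,\outx^k\,\outx^l}{\rad^3}\d\mug+o(1)$, and the hypothesis bounds that normal--normal integral by $\oc_2$ without forcing it to converge, let alone to the correct limit; \lq upgrading the inequality to convergence\rq\ is asserted rather than proved. The paper closes this step with a separate quantitative argument: the same annulus computation, with the extra hypothesis used to trade $Q_i$ for the normal--normal flux $f_i(\rradius):=\int_{\sphere^2_\rradius(0)}(\outH-\troutzFund)\frac{\outx_i}\rad\d\mug$ up to $O(\rradius^{-\outve})$, produces the asymptotic ODE $\vert f_i'(\rradius)+\frac2\rradius f_i(\rradius)-\frac2\rradius\impuls_i\vert\le C\rradius^{-1-\outve}$, which is then solved asymptotically (as in \cite[Prop.~C.1]{nerz2014CMCfoliation}) to give $\vert f_i(\rradius)-\impuls_i\vert\le C\rradius^{-\outve}$ and hence the stated convergence with a rate. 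You would need an argument of this kind, or an equivalent quantitative statement, to justify the second implication.
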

Note that we can apply this proposition to every $\mathcal C^2_{\frac12+\outve}$-asymptotically flat initial data set with $\mathcal C^0_2$-asymptotically vanishing second fundamental form.
\begin{proof}
First, we note that for every $\rradius'\ge\rradius>R_0$
\[ \vert\impuls<\rradius>_\oi - \impuls<\rradius'>_\oi\vert
	\le \int_{\R^3\setminus B_{\rradius}(0)}\vert\hspace{.05em}\outdiv(\momentum_\oi)\vert\d\outmug \le \frac C{\rradius^\ve}, \]
where we used the Gau\ss\ theorem and where $\outmug$ denotes the three-dimensional volume measure with respect to $\outg*$. In particular, the linear momentum $\impuls$ is well-defined and $\vert\impuls-\impuls<\rradius>\vert\le\nicefrac C{\rradius^\ve}$. We see that for every $\rradius'\ge\rradius>R_0$
\begin{align*}
 2\,\outc_2
	\ge{}& \vert\int_{\sphere^2_{\rradius'}(0)}(\outH-\troutzFund)\,\frac{\outx_\oi}{\rradius'} \d\mug - \int_{\sphere^2_{\rradius}(0)}(\outH-\troutzFund)\,\frac{\outx_\oi}\rradius \d\mug \vert \\
	\ge{}& \vert\int_{B_{\rradius,\rradius'}} \outzFund(e_\ok,\outlevi[e](\frac{\outx_\oi\outx_\ok}{\rad^2})) \d\Vol\vert - \frac C{\rradius^\ve}
\end{align*}
where $B_{\rradius,\rradius'}:=\{x\in\R^3\,:\,\rradius<\vert x\vert<\rradius'\}$ and where $\Vol$ and $\outlevi[e]$ denotes the measure and the Levi-Civita connection on $B_{\rradius',\rradius}$ with respect to the Euclidean metric $\eukoutg*$. \pagebreak[1]Thus, we conclude by the Fubini-Theorem
\begin{align*}
 2\,\outc_2
	\ge{}& \vert \int_{\rradius}^{\rradius'} \int_{\sphere^2_r(0)} \frac1{r^2}\,\outzFund(e_\ok,\outlevi[e](\outx_\oi\outx^\ok)) - \frac2{r^3}\,\outzFund_{\ok\ol}(\outx_\oi\outx^\ok\outx_\ol)\d\eukmug\d r \vert - \frac C{\rradius^\ve} \\
	\ge{}& \vert \int_{\rradius}^{\rradius'} r^{{-}2} \int_{\sphere^2_r(0)} \outzFund_{\ok\oi} \outx^\ok + \outH \outx_\oi - 2\outzFund_{\nu\nu}\,\outx_\oi\,\d\eukmug\d r \vert - \frac C{\rradius^\ve} \\
	\ge{}& \vert \int_{\rradius}^{\rradius'} \frac 2r (\int_{\sphere^2_r(0)} \troutzFund\,\frac{\outx_\oi}r \,\d\eukmug - \impuls_\oi)\d r \vert - \frac C{\rradius^\ve}.
\end{align*}
If one of the inequalities in \eqref{linear_Momentum_small_ineq} did not hold, then there would be a $\kappa>0$ such that $\int_{\sphere^2_r(0)} \troutzFund\;\nicefrac{\outx_\oi}r \,\d\eukmug - \impuls_\oi>\kappa$ or $\int_{\sphere^2_r(0)} \troutzFund\;\nicefrac{\outx_\oi}r \,\d\eukmug - \impuls_\oi<{-}\kappa$. In both cases, we would get
\[
 2\,\outc_2 \ge \vert \int_{\rradius}^{\rradius'} \frac\kappa r \d r\vert - \frac C{\rradius^\ve} = \kappa (\ln(\rradius') - \ln(\rradius)) - \frac C{\rradius^\ve}
	\xrightarrow{\rradius'\to\infty} \infty \]
contradicting $\outc_2\relax<\infty$. Thus, both inequalities in \eqref{linear_Momentum_small_ineq} hold.

Define $f_i(\rradius):=\int_{\sphere^2_\rradius(0)}(\outH-\tr\outzFund*)\,\nicefrac{\outx_i}\rad\d\mug$. A calculation as the one above implies
\[ f(\rradius') - f(\rradius) + \frac2r \int_\rradius^{\rradius'} (\int_{\sphere^2_r(0)}\troutzFund\,\frac{\outx_i}r \d\mug - \impuls_i )\d r = \int_\rradius^{\rradius'} \text{err}_i(r) \d r, \]
where $\text{err}_i(r)$ is some error term with $\vert\text{err}_i(r)\vert\le\nicefrac C{r^{-1-\outve}}$. Thus, we get
\[ \vert f_i'(\rradius) + \frac{2\,f_i(\rradius)}\rradius - \frac{2\impuls_i}\rradius \vert \le \frac C{\rradius^{1+\outve}} \qquad\forall\,\rradius>R_0. \]
Solving this ordinary differential (asymptotically) equation (see \cite[Prop.~C.1]{nerz2014CMCfoliation} for more information), we conclude $\vert f_i(\rradius) - \impuls_i\vert\le\nicefrac C{\rradius^\ve}$.
\end{proof}

\begin{proof}[Proof of the foliation property in Theorem~\ref{Existence}]
Fix a sign $\pm$ and a (large) constant $\Hradius_1$. Denote by $\M:=\M<\Hradius_1>$ the CE-surfaces with constant expansion $\H\pm\troutzFund\equiv\nicefrac{{-}2}{\Hradius_1}$ which exists due to the already proven part of Theorem~\ref{Existence}. By Proposition~\ref{stability_operator_invertible}, the pseudo stability operator is invertible. Thus, there exists a map $\Phi:\interval{\Hradius_1-\ve}{\Hradius_1+\ve}\times\sphere^2\to\outM$ such that for every $\Hradius\in\interval{\Hradius_1-\ve}{\Hradius_1+\ve}$ the surface $\M<\Hradius>:=\Phi(\Hradius,\sphere^2)$ is a CE-surface with $\H\pm\troutzFund=\nicefrac{{-}2}\Hradius$. We see that the lapse function $\rnu:=\outg*(\nu[\Hradius],\partial*_\Hradius\Phi)$ satisfies
\[ \jacobipm{\pm1}\rnu \equiv \partial[\Hradius](\frac{{-}2}\Hradius) \equiv \frac2{\Hradius^2}. \]
Therefore, we conclude for $\rnu':=\rnu-1$ and $\deform{\rnu'}:=\rnu-1-\trans{\rnu}$
\[
 \Vert {\rnu'}^\bot\Vert_{\Hk^2(\M)} \le C\,\Hradius^{\frac12-\outve} \qquad\text{due to}\qquad
 \vert\hspace{.05em}\jacobipm{\pm1}(\rnu') \vert \le \frac C{\Hradius^{\frac52+\outve}}.
\]
Repeating the arguments of Lemma~\ref{decay_rnu_full}, we conclude using Proposition~\ref{linear_Momentum_small}
\begin{align*}\labeleq{foliation_rnu_greater_zero}
 \Vert\rnu'\Vert_{\Hk^2(\M)}
  \le{}& C\,\Hradius\vert\int(\div\outzFundnu+\frac{{-}3\troutzFund+2\outH}\Hradius - \frac2\Hradius\outzFund(\nu,\vecoff{{\rnu'}^T}))\nu_\oi + \frac2\Hradius\outzFund_{\nu\oi}\rnu'\d\mug\vert \\
		& + C\,\Vert\rnu'\Vert_{\Lp^2(\M)} \\
	\le{}& D\,(\Hradius+\Vert\rnu'\Vert_{\Lp^2(\M)}),
\end{align*}
where $D:=C(\outck+\Hradius^{{-}\outve})$ and $C=\Cof[\mass][\outve][\oc]$. We conclude that $\M<\Hradius>$ satisfies the assumptions of Proposition~\ref{Regularity_of_surfaces_in_asymptotically_flat_spaces} by repeating the argument of Lemma~\ref{Derivative_estimates}. Using Theorem~\ref{Uniqueness}, we see that $\M<\Hradius>$ is the surface constructed in Theorem~\ref{Existence}. As $\Hradius$ was arbitrary (sufficiently large), we can assume that $\Phi:\interval{\Hradius_1}\infty\times\sphere^2\to\outM$ satisfies the assumption made above. By \eqref{foliation_rnu_greater_zero}, we know that $\rnu$ is (strictly) positive for sufficiently large $\Hradius$ and sufficiently small $\outck$. In particular, $\Phi$ is a foliation.\pagebreak[3]
\end{proof}
\begin{proof}[Proof of the time-regularity Theorem~\ref{Regularity_over_time}]
Denote by $\tensor\Phi[t,\pm]:\interval{\Hradius_1}\infty\times\sphere\to\outM[t]$ the CE-foliation due to Theorem~\ref{Existence}. Note that $\Hradius_1$ can be chosen independently of $t$ as the temporal foliation is assumed to be uniformly $\Ck^2_{\frac12+\outve}$-asymptotically flat. Now\vspace{-.4em}, we define the maps ${\tensor\Phi[\pm]}:I\times\interval{\Hradius_1}\infty\times\sphere\to\uniM:(t,\Hradius,p)\mapsto{\tensor\Phi[t,\pm]}(\Hradius,p)$. We suppress the sign index $\pm$ in the following.

Fix a time $t_0\in I$ and a $\Hradius>\Hradius_1$ and suppress the corresponding indexes. We see that $\jacobipm{\pm}$ is the linearization of the map
\[ \textbf H \pm\textbf P : I\times\Wkp^{2,p}(\M) \to \Lp^p(\M) : (t,f) \mapsto (\H[t] \pm \troutzFund[t])(\graphnu\nu_t f) \]
in the second component at $(t_0,0)$.\pagebreak[1] Here, $(\H[t] \pm \troutzFund[t])(\graphnu\nu_t f)$ denotes the expansion of the graph of $f$ at time $t\in I$, where
\[ \graphnu\nu_t f := \{\unisymbol x^{-1}(t,\outx[t_0](p)+f\,D_{\nu}\outx[t_0])\in\outM[t] \,:\,p\in\M\} \]
By Proposition~\ref{stability_operator_invertible}, we know that $\jacobipm{\pm}$ is invertible, thus the implicit function theorem ensures that there is a curve $f:\interval{t_0-\ve}{t_0+\ve}\to\Hk^2(\M)$ such that $(\textbf H\pm \textbf P)(t,f(t))\equiv(\textbf H\pm \textbf P)(0,0)$. By the uniqueness Theorem~\ref{Uniqueness}, this implies $\graphnu\nu_t(f(t))=\M<\Hradius>[t]$. Thus, we can choose $\Phi$ to be smooth (at least $\Ck^1$).
\end{proof}

\section{Invariance in time}\label{invariance}
In this section, we prove that the unique CE-surfaces constructed in Theorem~\ref{Existence} are asymptotically independent of time if the linear momentum vanishes. As explained, this is to be expected, as the CMC spheres asymptotically evolve in time as given by the fraction of (ADM) linear momentum and (ADM) mass \cite[Theorem 4.1]{nerz2013timeevolutionofCMC}, the CE-spheres are asymptotically just shifts of the CMC spheres (due to Section~\ref{existence}), and it is appropriate to assume that the last shift is asymptotically independent of time.
\begin{theorem}[Invariance of the CE-surfaces in time]\label{Invariance}
Assume that the temporal foliation $(\outM[t],\outg[t]*,\outtensor[t] x,\outzFund[t],\outenden[t],\outmomden[t],\ralpha[t])_{t\in I}$ satisfies the assumptions of Theorem~\ref{Regularity_over_time} and let $\uniPhi[\pm]$ denote the foliation as defined in Theorem~\ref{Regularity_over_time}. If
\[ \vert\ralpha[t]-1\vert + \rad\,\vert\outlevi*\ralpha\vert_{\outg*} + \rad^2\,\vert\outHess\,\ralpha\vert_{\outg*} \le \frac{\oc}{\rad^{\frac12}},, \]
then $\uniPhi[\pm]$ is \normalbrace{asymptotically} a shift. If additionally
\[ \vert\int_{\sphere^2_\rradius(0)}\ralpha\,\outx[t]_\oi\d\mug \vert \le \outck\,\rradius^2, \quad
		\vert\int_{\sphere^2_\rradius(0)}\einstein[t]_{kl}\frac{\outx[t]^k\,\outx[t]^l\,\outx[t]_i}{\rad^2}\d\mug\vert \le \outck \qquad\forall\,\rradius> R_0 \]
holds for every $\oi\in\{1,2,3\}$ and if the Einstein Tensor $\einstein*$ of $\uniM*$ satisfies
\[ \Big| \rad[t]\,\einstein[t]_{\ok\ol}\outx[t\,]^\ok\,\outx[t]^\ol\Big| \le \outc, \]
then this shift is small, i.\,e.
\begin{equation*}
 \Vert\eukoutg(\partial[t]@{\uniPhi<\Hradius>},\nu<\Hradius>)\Vert_{\Wkp^{1,\infty}(\M[t,\pm]<\gewicht>)} \le C(\outck+\Hradius^{{-}\outve}), \labeleq{evolution_linear_momentum}
\end{equation*}
where $\nu<\Hradius>[t]$ denotes the outer unit normal of $\M<\Hradius>[t]$, $\uniPhi<\Hradius>:=\uniPhi(\cdot,\Hradius,\cdot)$, $C:=\Cof[\mass][\outve][\oc]$.
\end{theorem}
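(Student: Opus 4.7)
The plan is to differentiate the constant-expansion identity along $\uniPhi<\Hradius>$ in time, thereby converting the invariance claim into an inversion of the spatial pseudo stability operator $\jacobipm{\pm}$, in direct analogy to Lemma~\ref{decay_rnu_full}. Since the temporal foliation is orthogonal, $\partial*_t|_{\outM[t]}=\ralpha\,\tv$, and I write
\[ \partial*_t\uniPhi<\Hradius> = \ralpha\,\tv + \rnu\,\nu<\Hradius> + V \]
with $V$ tangent to $\M<\Hradius>[t]$. Tangential motion leaves the expansion unchanged, so differentiating the identity $(\H+\gewicht\,\troutzFund)(\uniPhi(t,\Hradius,\cdot))\equiv\nicefrac{{-}2}\Hradius$ in $t$ at $\gewicht=\pm1$ and invoking Corollary~\ref{Stability_operators} for both linearizations yields
\[ \jacobit{\pm}\ralpha + \jacobipm{\pm}\rnu = 0 \]
on every leaf $\M<\Hradius>[t]$. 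The entire theorem then reduces to bounding the resulting $\rnu=\outg*(\partial*_t\uniPhi<\Hradius>,\nu<\Hradius>)$, with the Euclidean quantity in \eqref{evolution_linear_momentum} differing from $\rnu$ only by a factor $1+O(\rad^{{-}1/2-\outve})$ from asymptotic flatness.

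For the non-translative component $\rnu-\trans\rnu$ I would combine \eqref{stability_operator_eigenvalue_bot} with the explicit formula \eqref{jacobit} for $\jacobit{\pm}\ralpha$, using the new pointwise decay $|\ralpha-1|+\rad\,|\outlevi*\ralpha|+\rad^2\,|\outHess\,\ralpha|\le\nicefrac\oc{\rad^{1/2}}$, the $\Ck^1_2$-decay of $\outzFund$, and the $\Ck^2_{\frac12+\outve}$-decay of $\outg*$ together with the constraint equations. Every term in $\jacobit{\pm}\ralpha$ is then pointwise of order at most $\rad^{-5/2-\outve}$ on $\M<\Hradius>$ and hence has $\Lp^2(\M<\Hradius>)$-norm $O(\Hradius^{-3/2-\outve})$; elliptic regularity converts this into $\Vert\rnu-\trans\rnu\Vert_{\Wkp^{2,p}(\M)}=O(\Hradius^{1/2-\outve+2/p})$ for $p>2$, which is already negligible in \eqref{evolution_linear_momentum} and establishes the qualitative first assertion that $\uniPhi[\pm]$ is asymptotically a shift.

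The main obstacle, exactly as in Lemma~\ref{decay_rnu_full}, is controlling the translative part $\trans\rnu$: on $\mathrm{span}\{\eflap_1,\eflap_2,\eflap_3\}$ the operator $\jacobipm{\pm}$ has eigenvalues only of order $\nicefrac\mass{\Hradius^3}$ by \eqref{stability_operator_eigenvalue_T}, so I must extract an $O(\nicefrac D{\Hradius^2})$ bound on $\int\jacobit{\pm}\ralpha\,\eflap_\oi\d\mug$ with $D=C(\outck+\Hradius^{{-}\outve})$. Splitting $\ralpha=1+(\ralpha-1)$ and expanding \eqref{jacobit}, I would apply the Codazzi equation $\outmomden*=\outdiv(\outH\,\outg*-\outzFund)$ and integration by parts to rewrite $\int\outmomden(\nu)\,\eflap_\oi\d\mug$ in terms of $\int\outH\,\eflap_\oi\eflap_\oj\d\mug$ and $\int\outzFund(\nu,X_\oi\eflap_\oj-X_\oj\eflap_\oi)\d\mug$, and use the contracted Gau\ss\ equation to handle $\outric*(\nu,\nu)$. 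The asymptotic identification \eqref{stability_operator_eigenvalue__levi_eflap_i} of $\eflap_\oi$ with $\nicefrac{\nu_\oi}\Hradius$ then reduces each remaining integral to a coordinate-sphere quantity, all of which are small by \eqref{assumptions_Theorem_1}, \eqref{assumptions_Theorem_2}, the new bound $\vert\int\einstein_{kl}\nicefrac{\outx^k\outx^l\outx_\oi}{\rad^2}\d\mug\vert\le\outck$, and $\vert\int\ralpha\,\outx_\oi\d\mug\vert\le\outck\,\rradius^2$ (the latter absorbing the $\jacobit{\pm}(\ralpha-1)$ contribution after integration by parts of $\laplace\ralpha$ against $\eflap_\oi$). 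The eigenvalue bound \eqref{stability_operator_eigenvalue} yields $\Vert\trans\rnu\Vert_{\Lp^2(\M)}\le C\,\Hradius\,D$, hence $\Vert\trans\rnu\Vert_{\Lp^\infty(\M)}\le C\,D$ via $|\eflap_\oi|\le C\Hradius^{{-}1}$, and combining with the non-translative bound delivers $\Vert\rnu\Vert_{\Wkp^{1,\infty}(\M)}\le C(\outck+\Hradius^{{-}\outve})$.

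The delicate point throughout is that the individual terms in \eqref{jacobit} with $\ralpha\equiv1$ are not small, but their specific linear combination --- through Codazzi, the constraints, and the Euclidean identification of $\eflap_\oi$ --- collapses precisely to the coordinate-sphere expression of the ADM linear momentum (cf.~Proposition~\ref{linear_Momentum_small}) together with the Einstein-tensor integral that is now being imposed as small. Without these new smallness assumptions the CE-leaves would drift in time at the rate $\nicefrac\impuls\mass$, in line with the CMC-analogue \cite[Thm.~4.1]{nerz2013timeevolutionofCMC}.
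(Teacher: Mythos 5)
Your overall architecture coincides with the paper's: differentiating the CE-condition along the deformation gives $\jacobipm{\pm}\rnu={-}\jacobit\ralpha$ (this is exactly Proposition~\ref{Characterization_evolution}), the non-translative part of $\rnu$ is handled through the invertibility estimates of Proposition~\ref{stability_operator_invertible}, and the translative part is estimated by testing against the eigenfunctions $\eflap_i$ and feeding in \eqref{assumptions_Theorem_1}, \eqref{assumptions_Theorem_2}, the two new integral bounds and \eqref{stability_operator_eigenvalue_T}--\eqref{stability_operator_eigenvalue}; also your use of $\vert\int\ralpha\,\outx_i\d\mug\vert\le\outck\,\rradius^2$ to absorb $\int\laplace\ralpha\;\eflap_i\d\mug={-}\ewlap_i\int\ralpha\,\eflap_i\d\mug$ matches the paper (the Codazzi manipulation of $\int\outmomden(\nu)\,\eflap_i\d\mug$ is unnecessary, since $\outmomden$ already decays like $\rad^{-3-\outve}$ by \eqref{decay_outzFund}, but it is not wrong). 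The genuine gap is your treatment of the term $\int\outric*(\nu,\nu)\,\eflap_i\d\mug$. Invoking the contracted Gau\ss\ equation, $2\,\outric*(\nu,\nu)=\outsc-\scalar+\H^2-\trtr\zFund\zFund$, only trades this term for $\int\scalar\,\eflap_i\d\mug$, i.\,e.\ for the first spherical-harmonic mode of the surface scalar curvature, and none of the imposed integral assumptions says anything about that mode: the Einstein-tensor bounds control the four-dimensional quantity $\einstein*(\nu,\nu)$ appearing in \eqref{jacobit}, not the three-dimensional $\outric*(\nu,\nu)$, and the constraint equations control $\outsc$ and $\outdiv(\outH\,\outg*-\outzFund*)$ only. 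The intrinsic information available (closeness of $\g*$ to the round metric at the rate dictated by $\Vert\zFundtrf\Vert_{\Hk(\M)}$ from Proposition~\ref{Regularity_of_surfaces_in_asymptotically_flat_spaces}) only yields a bound of order $\Hradius^{-\frac32-\outve}$ for $\int\scalar\,\eflap_i\d\mug$, which after multiplication with the factor $\Hradius^3$ coming from the smallness of $\jacobipm{\pm}$ on the translative subspace produces $\Hradius^{\frac32-\outve}$, far above the required $D\,\Hradius$ with $D=C(\outck+\Hradius^{-\outve})$; and if you try to compute this mode from the graph representation, the ambient curvature re-enters through the very Gau\ss\ equation you used, so the reduction is circular. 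The paper closes precisely this point by a different argument: it keeps $\int\outric*(\nu,\nu)\,\nu_i\d\mug$ and bounds it via the Gau\ss\ (divergence) theorem applied between $\M$ and large coordinate spheres, quantified in \cite[Lemma~A.3]{nerz2014CMCfoliation}, giving a bound in terms of $\vert\centerz\,\vert$ and the ambient decay. Without this (or an equivalent) estimate your chain of inequalities does not close.

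A secondary, repairable point: your bound $\Vert\rnu-\trans\rnu\Vert_{\Wkp^{2,p}(\M)}=O(\Hradius^{\frac12-\outve+\frac2p})$ is too weak for \eqref{evolution_linear_momentum}, since by Sobolev embedding it only controls the sup-norm by $\Hradius^{\frac12-\outve}$, which is not $O(\outck+\Hradius^{-\outve})$ for $\outve<\nicefrac12$. Using \eqref{stability_operator_eigenvalue_bot} -- the inverse is of order $\Hradius^2$, not $\Hradius^3$, on the complement of the translative subspace -- together with the pointwise bound $\vert\jacobit\ralpha\vert\lesssim\rad^{-\frac52}$ (note the new lapse assumption carries no extra $\outve$, so your claimed order $\rad^{-\frac52-\outve}$ for $\laplace\ralpha$ is also slightly optimistic) gives $\Vert\deform\rnu\Vert_{\Wkp^{2,p}(\M)}\lesssim\Hradius^{\frac2p-\frac12}$, as in the paper, and this is what the final $\Wkp^{1,\infty}$-estimate actually needs.
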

We directly see, that the descriptive version, Corollary~\ref{Invariance_descriptive}, is a direct corollary of Theorem~\ref{Invariance}. Let $(\outM[t],\outg[t]*,\outtensor[t] x,\outzFund[t],\outenden[t],\outmomden[t],\ralpha[t])_{t\in I}$ and $f$ satisfy the assumptions of Theorem~\ref{Invariance} -- in particular, the ADM linear momentum is small due to Proposition~\ref{linear_Momentum_small}. 
\begin{proposition}[Characterization of the lapse function \texorpdfstring{$\rnu$}\relax]\label{Characterization_evolution}
Assume that the assumptions of Theorem~\ref{Invariance} are satisfied and let $\uniPhi$ denote the foliation as defined in Theorem~\ref{Regularity_over_time}. The lapse function $\rnu<\Hradius>[t]:=\outg[t]*(\nu<\Hradius>[t],\partial*_t\uniPhi)$ is uniquely characterized by
\begin{equation*}
 \jacobipm{\pm}<\Hradius>[t]\,\rnu<\Hradius>[t] = {-} \jacobit<\Hradius>[t]\,\ralpha, \labeleq{rnu_characterization_evolution}
\end{equation*}
where $\nu<\Hradius>[t]$ is the normal of $\M<\Hradius>[t]\hookrightarrow(\outM[t],\outg[t]*)$.\pagebreak[1] There are constants $\Hradius_1=\Cof{\Hradius_1}[\mass][\outve][\oc][\outck]$ and $C=\Cof[\mass][\outve][\oc]$ such that
\begin{equation*}
 \vert \jacobipm{\pm}\rnu - \div\outzFundnu - \frac1\Hradius\troutzFund \mp\laplace\ralpha \mp \outric*(\nu,\nu) \vert \le \frac C{\Hradius^3}(\outck+\Hradius^{-\ve}). \labeleq{jacobipm_evolution}
\end{equation*}
Here, the indices $\Hradius>\Hradius_1$ and $t\in I$ were suppressed.
\end{proposition}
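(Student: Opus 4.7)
The plan is to derive the identity \eqref{rnu_characterization_evolution} by differentiating the constancy of the expansion along the foliation $\uniPhi$ in the time variable, and then to obtain the pointwise estimate \eqref{jacobipm_evolution} by inserting the explicit formula \eqref{jacobit} for $\jacobit\ralpha$ from Corollary~\ref{Stability_operators} and estimating the residual terms via the decay hypotheses of Theorem~\ref{Invariance}.

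For the first part, I would fix $\Hradius>\Hradius_1$ and $p\in\sphere^2$ and consider the trajectory $t\mapsto\uniPhi<\Hradius>(t,p)\in\M<\Hradius>[t]\subset\outM[t]$. The orthogonality of the temporal foliation forces $\partial*_t|_{\outM[t]}=\ralpha\,\tv$, so the velocity decomposes in the $\uniM$-tangent space as $\partial*_t\uniPhi<\Hradius>=\ralpha\,\tv+\rnu\,\nu+(\text{tangent to }\M<\Hradius>[t])$. Since every $\M<\Hradius>[t]$ has the same constant $\gewicht=\pm1$-weighted expansion $\nicefrac{-2}\Hradius$, the total $t$-derivative of the expansion along this curve vanishes. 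By the definitions of $\jacobipm$ and $\jacobit$ as linearizations of the spatial and temporal expansion maps (Corollary~\ref{Stability_operators}), with the tangential component merely reparametrizing the surface, this derivative equals $\jacobipm\rnu+\jacobit\ralpha$, giving \eqref{rnu_characterization_evolution}. Uniqueness of $\rnu$ is immediate from the invertibility of $\jacobipm$ (Proposition~\ref{stability_operator_invertible}).

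For the pointwise estimate, I would substitute \eqref{jacobit} into $\jacobipm\rnu=-\jacobit\ralpha$ and match the four claimed leading terms. The $\mp\laplace\ralpha$, $\div\outzFundnu\cdot\ralpha$, and $\mp\outric*(\nu,\nu)\ralpha$ in \eqref{jacobit} directly contribute $\pm\laplace\ralpha$, $\div\outzFundnu$, and $\pm\outric*(\nu,\nu)$, up to $(\ralpha-1)$-factors. The crucial $\nicefrac1\Hradius\troutzFund$-term emerges from $-\trtr\zFund\outzFund\cdot\ralpha$ after decomposing $\zFund=\ktrf+\nicefrac12\,\H\,\g*$ and substituting $\H=\nicefrac{-2}\Hradius\mp\troutzFund$, producing $\trtr\zFund\outzFund=\trtr\ktrf\outzFund-\nicefrac1\Hradius\troutzFund\mp\nicefrac12\,(\troutzFund)^2$.

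The main obstacle is bounding every residual term by $\nicefrac C{\Hradius^3}(\outck+\Hradius^{-\ve})$. Most cross-products are controlled straightforwardly using $|\outzFund|,|\outH|\le\oc\Hradius^{-2}$, $|\outlevi*\outzFund|\le\oc\Hradius^{-3}$ (from $\mathcal C^1_2$-vanishing), the strengthened $|\outlevi*\ralpha|\le\oc\Hradius^{-3/2}$ and $|\outHess\,\ralpha|\le\oc\Hradius^{-5/2}$ assumed in Theorem~\ref{Invariance}, and the bound $|\ktrf|\le C\Hradius^{-3/2-\ve}$ from Proposition~\ref{Regularity_of_surfaces_in_asymptotically_flat_spaces}. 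The borderline cases are $(\ralpha-1)\outric*(\nu,\nu)$, which requires the full $\mathcal C^2_{1/2+\ve}$-decay $|\ralpha-1|\le\oc\Hradius^{-1/2-\ve}$; the $\outmomden(\nu)\,\ralpha$-contribution, handled by invoking the momentum constraint $\outmomden=\outdiv(\outH\,\outg*-\outzFund)$ together with $|\outlevi*\outzFund|\le C\Hradius^{-3}$ to upgrade the baseline bound to $|\outmomden|\le C\Hradius^{-3}$; and $\einstein*(\nu,\nu)$, controlled pointwise by the hypothesis $|\rad\,\einstein_{kl}\outx^k\outx^l|\le\oc$ together with $\nu\approx\nicefrac\outx\rad$ via the graph representation of $\M<\Hradius>[t]$. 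The auxiliary $\rnu\,D_\nu\ralpha\,\troutzFund$-term in \eqref{jacobit} should drop out because $\jacobit\ralpha$ is independent of the choice of spatial lapse used in deriving the formula.
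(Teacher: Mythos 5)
Your proposal reproduces the paper's own (very terse) argument: the identity \eqref{rnu_characterization_evolution} is obtained, exactly as in the paper, by differentiating the constant-expansion condition $\H\pm\troutzFund\equiv\nicefrac{{-}2}\Hradius$ along $\uniPhi$ in $t$, with uniqueness coming from the invertibility of $\jacobipm{\pm}$ (Proposition~\ref{stability_operator_invertible}), and \eqref{jacobipm_evolution} follows by inserting \eqref{jacobit} and estimating the remainder via \eqref{decay_outzFund}, \eqref{decay_lapse} and the regularity of $\M<\Hradius>$ from Proposition~\ref{Regularity_of_surfaces_in_asymptotically_flat_spaces}; in particular, your extraction of the $\frac1\Hradius\troutzFund$-term from $\trtr\zFund\outzFund$ via $\zFund*=\zFundtrf*+\frac12\,\H\,\g*$ and $\H=\nicefrac{{-}2}\Hradius\mp\troutzFund$ is precisely the right bookkeeping.

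Two quantitative points in your error analysis need correction. For the term $\outmomden(\nu)\,\ralpha$ you propose to derive $\vert\outmomden\vert\le C\,\Hradius^{-3}$ from the constraint equation together with $\vert\outlevi*\outzFund\vert\lesssim\rad^{-3}$; a remainder of size $\Hradius^{-3}$ is \emph{not} bounded by $\frac C{\Hradius^3}(\outck+\Hradius^{-\outve})$, so this step, as written, misses the stated constant. No derivation is needed here: the decay \eqref{decay_outzFund}, which is part of the definition of $\Ck^2_{\frac12+\outve}$-asymptotic flatness, directly gives $\vert\outmomden\vert\le\oc\,\rad^{-3-\outve}$, which suffices -- cite that instead. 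Similarly, the pointwise hypothesis on the Einstein tensor in Theorem~\ref{Invariance} only yields $\vert\einstein*(\nu,\nu)\vert\le C\,\Hradius^{-3}$, again without the smallness factor $(\outck+\Hradius^{-\outve})$; here you are no worse off than the source (the paper's two-line proof does not address this term either, and the smallness relevant for this contribution really enters through the \emph{integral} assumption on $\einstein*$ in Theorem~\ref{Invariance}, which is used only at the integrated stage of that theorem's proof), but you should not claim that the pointwise hypothesis alone produces the full factor. Finally, the term $\rnu\,D_{\nu}\ralpha\;\troutzFund$ need not ``drop out'': with $\vert\outlevi*\ralpha\vert\lesssim\rad^{-\frac32}$ and $\vert\troutzFund\vert\lesssim\Hradius^{-2}$ it is of order $\Hradius^{-\frac72}$ and is absorbed directly into the error.
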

\begin{proof}
Per construction of $\uniPhi$, we know
\[ 0 \equiv \partial[t]@{\H<\Hradius>[t]\pm\tr<\Hradius>[t]\outzFund[t]*} = \jacobipm{\pm}<\Hradius>[t]\,\rnu<\Hradius>[t] + \jacobit<\Hradius>[t]\ralpha[t]. \]
Thus, \eqref{rnu_characterization_evolution} holds and is a unique characterization of $\rnu$ due to the invertibility of $\jacobipm{\pm}<\Hradius>[t]$. Using \eqref{jacobit} and \eqref{rnu_characterization_evolution}, we conclude the inequalities due to the assumptions \eqref{decay_outzFund} and \eqref{decay_lapse} on $\outzFund*$, ${\outmomden}$, and $\ralpha$, as well as the regularity of $\M<\Hradius>$ due to Proposition~\ref{Regularity_of_surfaces_in_asymptotically_flat_spaces}.
\end{proof}
\begin{proof}[Proof of Theorem~\ref{Invariance}]
We choose an arbitrary $t\in I$ and $\Hradius>\Hradius_0$ and suppress the corresponding indices. For the first part, we see that $\rnu$ satisfies
\[
 \Vert \rnu\Vert_{\Wkp^{2,p}(\M)} \le C\,\Hradius^{\frac12+\frac2p-\outve}, \qquad
 \Vert \deform\rnu\Vert_{\Wkp^{2,p}(\M)} \le C\,\Hradius^{\frac2p-\frac12}
\]
due to \eqref{jacobipm_evolution} and the regularity of $\jacobipm{\pm}$ (Proposition~\ref{stability_operator_invertible}), where $p\in\interval*1\infty$ is arbitrary and again $\deform{\rnu}:=\rnu-\trans{\rnu}$. In particular, $\Phi$ is (asymptotically) a shift.

For the second part, we see that the inequality \eqref{evolution_linear_momentum} holds if and only if
\[ \text{err} := \Vert\trans\rnu\Vert_{\Lp^2(\M)} \le \sum_{i=1}^3\vert\int\rnu\,\eflap_i\d\mug\vert \le C(\outck+\Hradius^{{-}\outve})\,\Hradius =: D\,\Hradius, \]
where $\{\eflap_i\}_{i\in\N}$ again is a complete orthonormal system of $\Lp^2(\M)$ of eigenvalues of the (negative) Laplace operator with corresponding eigenvalues $\ewlap_i$ satisfying $\ewlap_i\le\ewlap_{i+1}$. Using the asymptotic characterization of $\jacobipm{\pm}$ on $\Lp^2(\M)^T:=\lin\{\eflap_i\}_{i=1}^3$, \eqref{estimate_on_A}, and inequality \eqref{stability_operator_eigenvalue_T}, we conclude
\begin{align*}
 \text{err}
	\le{}& C\,\Hradius^3\,\sum_{i=1}^3 \vert\int\trans{\rnu}\,\jacobipm{\pm}*\eflap_i\d\mug \vert
	\le C\,\Hradius^3\,\sum_{i=1}^3 \vert\int\rnu\,\jacobipm{\pm}*\eflap_i\d\mug \vert + C\,\Hradius^{\frac12-\outve}\Vert\deform\rnu\Vert_{\Lp^2(\M)} \\
	\le{}&  C\,\Hradius^3\,\sum_{i=1}^3 \vert\int\jacobipm{\pm}*\rnu\,\eflap_i\d\mug - 2\int\outzFund*(\nu,\levi*\trans\rnu)\,f-\outzFund*(\nu,\levi*f_\oi)\,\trans\rnu\d\mug\vert + D\,\Hradius.
\end{align*}
Now, we use the asymptotic identity \eqref{stability_operator_eigenvalue__levi_eflap_i} for $\levi*f^T$ and the characterization of $\rnu$ due to Proposition~\ref{Characterization_evolution} to get
\begin{align*}
 \text{err}
	\le{}& C\,\Hradius^3\,\sum_{i=1}^3 \vert\int\jacobit\ralpha\;\eflap_i\d\mug + \frac2\Hradius\sum_{j=1}^3\int\outzFund*(\nu,X_j)\,f-\outzFund*(\nu,X_i)\,\rnu\d\mug\,\int\rnu\,\eflap_j\d\mug \vert \\
		& + D\,\Hradius + D\,\Vert\trans\rnu\Vert_{\Hk(\M)} \\
	\le{}& C\,\Hradius^3\,\sum_{i=1}^3 \vert\int(\div\outzFundnu + \frac1\Hradius\troutzFund \pm\laplace\ralpha \pm \outric*(\nu,\nu) )\,f_\oi\d\mug \vert + D\,\Hradius + D\,\text{err} \\
	\le{}& C\,\Hradius^3\,\sum_{i=1}^3 \vert\int(\frac1\Hradius\outH \pm \outric*(\nu,\nu) )\eflap_\oi - \frac{\outzFund(\nu,X_i)}\Hradius \mp \ewlap_i\,\ralpha\,\eflap_i\d\mug \vert + D\,\Hradius + D\,\text{err}.
\end{align*}
Now we use the asymptotic antisymmetry of $\eflap_i$, the comparability of $\eflap_i$ and $\nu_i$, the estimates on $\centerz$ and Proposition~\ref{linear_Momentum_small} to conclude
\[ \text{err} \le C\,\Hradius^2\,\sum_{i=1}^3 \vert\int\outric*(\nu,\nu)\,\nu_i\d\mug \vert + D\,\Hradius. \]
However, this last integral is bounded by $C\,\vert\centerz\,\vert\le D\,\Hradius$ due to the Gau\ss\ theorem. This was proven in full detail by the author in \cite[Lemma~A.3]{nerz2014CMCfoliation}.
\end{proof}\bigskip

\bibliography{bib}
\bibliographystyle{alpha}\vfill
\end{document}